\journal{{\tt arXiv.org}}
\definecolor{greenyellow}   {cmyk}{0.15, 0   , 0.69, 0   }
\definecolor{yellow}        {cmyk}{0   , 0   , 1   , 0   }
\definecolor{goldenrod}     {cmyk}{0   , 0.10, 0.84, 0   }
\definecolor{dandelion}     {cmyk}{0   , 0.29, 0.84, 0   }
\definecolor{apricot}       {cmyk}{0   , 0.32, 0.52, 0   }
\definecolor{peach}         {cmyk}{0   , 0.50, 0.70, 0   }
\definecolor{melon}         {cmyk}{0   , 0.46, 0.50, 0   }
\definecolor{yelloworange}  {cmyk}{0   , 0.42, 1   , 0   }
\definecolor{orange}        {cmyk}{0   , 0.61, 0.87, 0   }
\definecolor{burntorange}   {cmyk}{0   , 0.51, 1   , 0   }
\definecolor{bittersweet}   {cmyk}{0   , 0.75, 1   , 0.24}
\definecolor{redorange}     {cmyk}{0   , 0.77, 0.87, 0   }
\definecolor{mahogany}      {cmyk}{0   , 0.85, 0.87, 0.35}
\definecolor{maroon}        {cmyk}{0   , 0.87, 0.68, 0.32}
\definecolor{brickred}      {cmyk}{0   , 0.89, 0.94, 0.28}
\definecolor{red}           {cmyk}{0   , 1   , 1   , 0   }
\definecolor{orangered}     {cmyk}{0   , 1   , 0.50, 0   }
\definecolor{rubinered}     {cmyk}{0   , 1   , 0.13, 0   }
\definecolor{wildstrawberry}{cmyk}{0   , 0.96, 0.39, 0   }
\definecolor{salmon}        {cmyk}{0   , 0.53, 0.38, 0   }
\definecolor{carnationpink} {cmyk}{0   , 0.63, 0   , 0   }
\definecolor{magenta}       {cmyk}{0   , 1   , 0   , 0   }
\definecolor{violetred}     {cmyk}{0   , 0.81, 0   , 0   }
\definecolor{rhodamine}     {cmyk}{0   , 0.82, 0   , 0   }
\definecolor{mulberry}      {cmyk}{0.34, 0.90, 0   , 0.02}
\definecolor{redviolet}     {cmyk}{0.07, 0.90, 0   , 0.34}
\definecolor{fuchsia}       {cmyk}{0.47, 0.91, 0   , 0.08}
\definecolor{lavender}      {cmyk}{0   , 0.48, 0   , 0   }
\definecolor{thistle}       {cmyk}{0.12, 0.59, 0   , 0   }
\definecolor{orchid}        {cmyk}{0.32, 0.64, 0   , 0   }
\definecolor{darkorchid}    {cmyk}{0.40, 0.80, 0.20, 0   }
\definecolor{purple}        {cmyk}{0.45, 0.86, 0   , 0   }
\definecolor{plum}          {cmyk}{0.50, 1   , 0   , 0   }
\definecolor{violet}        {cmyk}{0.79, 0.88, 0   , 0   }
\definecolor{royalpurple}   {cmyk}{0.75, 0.90, 0   , 0   }
\definecolor{blueviolet}    {cmyk}{0.86, 0.91, 0   , 0.04}
\definecolor{periwinkle}    {cmyk}{0.57, 0.55, 0   , 0   }
\definecolor{cadetblue}     {cmyk}{0.62, 0.57, 0.23, 0   }
\definecolor{cornflowerblue}{cmyk}{0.65, 0.13, 0   , 0   }
\definecolor{midnightblue}  {cmyk}{0.98, 0.13, 0   , 0.43}
\definecolor{navyblue}      {cmyk}{0.94, 0.54, 0   , 0   }
\definecolor{royalblue}     {cmyk}{1   , 0.50, 0   , 0   }
\definecolor{blue}          {cmyk}{1   , 1   , 0   , 0   }
\definecolor{cerulean}      {cmyk}{0.94, 0.11, 0   , 0   }
\definecolor{cyan}          {cmyk}{1   , 0   , 0   , 0   }
\definecolor{processblue}   {cmyk}{0.96, 0   , 0   , 0   }
\definecolor{skyblue}       {cmyk}{0.62, 0   , 0.12, 0   }
\definecolor{turquoise}     {cmyk}{0.85, 0   , 0.20, 0   }
\definecolor{tealblue}      {cmyk}{0.86, 0   , 0.34, 0.02}
\definecolor{aquamarine}    {cmyk}{0.82, 0   , 0.30, 0   }
\definecolor{bluegreen}     {cmyk}{0.85, 0   , 0.33, 0   }
\definecolor{emerald}       {cmyk}{1   , 0   , 0.50, 0   }
\definecolor{junglegreen}   {cmyk}{0.99, 0   , 0.52, 0   }
\definecolor{seagreen}      {cmyk}{0.69, 0   , 0.50, 0   }
\definecolor{green}         {cmyk}{1   , 0   , 1   , 0   }
\definecolor{forestgreen}   {cmyk}{0.91, 0   , 0.88, 0.12}
\definecolor{pinegreen}     {cmyk}{0.92, 0   , 0.59, 0.25}
\definecolor{limegreen}     {cmyk}{0.50, 0   , 1   , 0   }
\definecolor{yellowgreen}   {cmyk}{0.44, 0   , 0.74, 0   }
\definecolor{springgreen}   {cmyk}{0.26, 0   , 0.76, 0   }
\definecolor{olivegreen}    {cmyk}{0.64, 0   , 0.95, 0.40}
\definecolor{rawsienna}     {cmyk}{0   , 0.72, 1   , 0.45}
\definecolor{sepia}         {cmyk}{0   , 0.83, 1   , 0.70}
\definecolor{brown}         {cmyk}{0   , 0.81, 1   , 0.60}
\definecolor{tan}           {cmyk}{0.14, 0.42, 0.56, 0   }
\definecolor{gray}          {cmyk}{0   , 0   , 0   , 0.50}
\definecolor{black}         {cmyk}{0   , 0   , 0   , 1   }
\definecolor{white}         {cmyk}{0   , 0   , 0   , 0   }
\pgfplotsset{compat=newest}
\newcommand{\externaltikz}[2]{\includegraphics{Externals/#1}} 
\newtheorem{theorem}{Theorem}[section]
\newtheorem{definition}[theorem]{Definition}
\newtheorem{remark}[theorem]{Remark}
\newtheorem{example}[theorem]{Example}
\newtheorem{lemma}[theorem]{Lemma}
\newtheorem{corollary}[theorem]{Corollary}
\newcounter{tikzsubfigcounter}[figure]
\renewcommand{\thetikzsubfigcounter}{\thesection.\the\numexpr\value{figure}+1\relax\alph{tikzsubfigcounter}}
\newcounter{tikzsubfigcounterinvisible}[figure]
\renewcommand{\thetikzsubfigcounterinvisible}{\the\numexpr\value{figure}+1\relax\alph{tikzsubfigcounterinvisible}}
\newcommand{\settikzlabel}[1]{ %
\refstepcounter{tikzsubfigcounterinvisible} \label{#1}
}
\numberwithin{equation}{section}
\newcommand{\bdm}{\begin{displaymath}}
\newcommand{\edm}{\end{displaymath}}
\newcommand{\beq}{\begin{equation}}
\newcommand{\eeq}{\end{equation}}
\newcommand{\beqa}{\begin{eqnarray}}
\newcommand{\eeqa}{\end{eqnarray}}
\title{First-order continuous- and discontinuous-Galerkin moment models for a linear kinetic
equation: model derivation and realizability theory}
\author[fs]{Florian Schneider}
\address[fs]{Fachbereich Mathematik, TU Kaiserslautern, Erwin-Schr\"odinger-Str., 67663 Kaiserslautern, Germany, {\tt schneider@mathematik.uni-kl.de}}
\author[tl]{Tobias Leibner}
\address[tl]{Fachbereich Mathematik und Informatik, WWU M\"unster, Einsteinstrasse 62, 48149 M\"unster, {\tt tobias.leibner@uni-muenster.de}}
\date{}
\newlength{\figureheight}
\newlength{\figurewidth}
\pgfplotsset{%
	,compat=1.11
	,colormap={parula}{%
rgb(0pt)=(0.2081,0.1663,0.5292);
rgb(1pt)=(0.208355,0.16778,0.532238);
rgb(2pt)=(0.208611,0.169261,0.535275);
rgb(3pt)=(0.208866,0.170741,0.538313);
rgb(4pt)=(0.209121,0.172222,0.54135);
rgb(5pt)=(0.209376,0.173702,0.544388);
rgb(6pt)=(0.209632,0.175183,0.547425);
rgb(7pt)=(0.209887,0.176663,0.550463);
rgb(8pt)=(0.210134,0.178144,0.553505);
rgb(9pt)=(0.210338,0.179624,0.556568);
rgb(10pt)=(0.210542,0.181105,0.559631);
rgb(11pt)=(0.210746,0.182585,0.562694);
rgb(12pt)=(0.210944,0.184066,0.565763);
rgb(13pt)=(0.211123,0.185546,0.568852);
rgb(14pt)=(0.211302,0.187027,0.57194);
rgb(15pt)=(0.21148,0.188507,0.575029);
rgb(16pt)=(0.211642,0.189996,0.578117);
rgb(17pt)=(0.21177,0.191502,0.581206);
rgb(18pt)=(0.211897,0.193008,0.584295);
rgb(19pt)=(0.212025,0.194514,0.587383);
rgb(20pt)=(0.212132,0.19602,0.590472);
rgb(21pt)=(0.212208,0.197526,0.59356);
rgb(22pt)=(0.212285,0.199032,0.596649);
rgb(23pt)=(0.212361,0.200538,0.599738);
rgb(24pt)=(0.212413,0.202044,0.602839);
rgb(25pt)=(0.212438,0.20355,0.605953);
rgb(26pt)=(0.212464,0.205056,0.609067);
rgb(27pt)=(0.212489,0.206562,0.612181);
rgb(28pt)=(0.212471,0.208083,0.61531);
rgb(29pt)=(0.21242,0.209614,0.61845);
rgb(30pt)=(0.212368,0.211146,0.621589);
rgb(31pt)=(0.212317,0.212677,0.624729);
rgb(32pt)=(0.212216,0.214209,0.627868);
rgb(33pt)=(0.212088,0.215741,0.631008);
rgb(34pt)=(0.211961,0.217272,0.634148);
rgb(35pt)=(0.211833,0.218804,0.637287);
rgb(36pt)=(0.211668,0.220354,0.640446);
rgb(37pt)=(0.211489,0.221911,0.643611);
rgb(38pt)=(0.21131,0.223468,0.646776);
rgb(39pt)=(0.211132,0.225025,0.649941);
rgb(40pt)=(0.210848,0.226603,0.653107);
rgb(41pt)=(0.210541,0.228186,0.656272);
rgb(42pt)=(0.210235,0.229768,0.659437);
rgb(43pt)=(0.209929,0.231351,0.662602);
rgb(44pt)=(0.209553,0.232934,0.665767);
rgb(45pt)=(0.20917,0.234516,0.668932);
rgb(46pt)=(0.208787,0.236099,0.672098);
rgb(47pt)=(0.208405,0.237681,0.675263);
rgb(48pt)=(0.20787,0.239289,0.678453);
rgb(49pt)=(0.207334,0.240897,0.681644);
rgb(50pt)=(0.206798,0.242505,0.684835);
rgb(51pt)=(0.206255,0.244114,0.688025);
rgb(52pt)=(0.205617,0.245722,0.691216);
rgb(53pt)=(0.204979,0.24733,0.694407);
rgb(54pt)=(0.204341,0.248938,0.697597);
rgb(55pt)=(0.203675,0.250554,0.700792);
rgb(56pt)=(0.202858,0.252213,0.704008);
rgb(57pt)=(0.202041,0.253872,0.707224);
rgb(58pt)=(0.201225,0.255531,0.710441);
rgb(59pt)=(0.200372,0.257184,0.713657);
rgb(60pt)=(0.199402,0.258818,0.716873);
rgb(61pt)=(0.198432,0.260452,0.720089);
rgb(62pt)=(0.197462,0.262085,0.723305);
rgb(63pt)=(0.196419,0.263735,0.726522);
rgb(64pt)=(0.195219,0.26542,0.729738);
rgb(65pt)=(0.19402,0.267105,0.732954);
rgb(66pt)=(0.19282,0.268789,0.73617);
rgb(67pt)=(0.191549,0.270474,0.739386);
rgb(68pt)=(0.19017,0.272159,0.742603);
rgb(69pt)=(0.188792,0.273843,0.745819);
rgb(70pt)=(0.187414,0.275528,0.749035);
rgb(71pt)=(0.1859,0.277237,0.752264);
rgb(72pt)=(0.184241,0.278973,0.755505);
rgb(73pt)=(0.182581,0.280709,0.758747);
rgb(74pt)=(0.180922,0.282444,0.761989);
rgb(75pt)=(0.179133,0.284209,0.765245);
rgb(76pt)=(0.177244,0.285996,0.768512);
rgb(77pt)=(0.175356,0.287783,0.77178);
rgb(78pt)=(0.173467,0.289569,0.775047);
rgb(79pt)=(0.171363,0.291406,0.778314);
rgb(80pt)=(0.169142,0.293269,0.781581);
rgb(81pt)=(0.166922,0.295132,0.784849);
rgb(82pt)=(0.164701,0.296996,0.788116);
rgb(83pt)=(0.162238,0.298934,0.791365);
rgb(84pt)=(0.159686,0.300899,0.794606);
rgb(85pt)=(0.157133,0.302865,0.797848);
rgb(86pt)=(0.15458,0.30483,0.80109);
rgb(87pt)=(0.151738,0.306858,0.804352);
rgb(88pt)=(0.148828,0.3089,0.80762);
rgb(89pt)=(0.145918,0.310942,0.810887);
rgb(90pt)=(0.143008,0.312984,0.814154);
rgb(91pt)=(0.139687,0.31514,0.81733);
rgb(92pt)=(0.136318,0.31731,0.820495);
rgb(93pt)=(0.132949,0.319479,0.82366);
rgb(94pt)=(0.129579,0.321649,0.826826);
rgb(95pt)=(0.125811,0.323918,0.829841);
rgb(96pt)=(0.122033,0.32619,0.832853);
rgb(97pt)=(0.118256,0.328462,0.835865);
rgb(98pt)=(0.114458,0.330737,0.838862);
rgb(99pt)=(0.110349,0.333059,0.841619);
rgb(100pt)=(0.106239,0.335382,0.844376);
rgb(101pt)=(0.102129,0.337705,0.847132);
rgb(102pt)=(0.0979874,0.340021,0.849835);
rgb(103pt)=(0.093648,0.342292,0.852209);
rgb(104pt)=(0.0893087,0.344564,0.854583);
rgb(105pt)=(0.0849694,0.346836,0.856957);
rgb(106pt)=(0.08063,0.349091,0.859234);
rgb(107pt)=(0.0762907,0.351286,0.861174);
rgb(108pt)=(0.0719514,0.353481,0.863114);
rgb(109pt)=(0.067612,0.355676,0.865053);
rgb(110pt)=(0.0633195,0.357817,0.866853);
rgb(111pt)=(0.0591333,0.359833,0.868333);
rgb(112pt)=(0.0549471,0.36185,0.869814);
rgb(113pt)=(0.050761,0.363866,0.871294);
rgb(114pt)=(0.0466838,0.365823,0.872626);
rgb(115pt)=(0.0427784,0.367687,0.873724);
rgb(116pt)=(0.038873,0.36955,0.874821);
rgb(117pt)=(0.0349676,0.371414,0.875919);
rgb(118pt)=(0.0315066,0.373217,0.876872);
rgb(119pt)=(0.0285456,0.374953,0.877664);
rgb(120pt)=(0.0255847,0.376688,0.878455);
rgb(121pt)=(0.0226237,0.378424,0.879246);
rgb(122pt)=(0.0202132,0.380061,0.879868);
rgb(123pt)=(0.0182477,0.381618,0.880353);
rgb(124pt)=(0.0162823,0.383175,0.880838);
rgb(125pt)=(0.0143168,0.384732,0.881323);
rgb(126pt)=(0.0127892,0.386241,0.881695);
rgb(127pt)=(0.0115129,0.387721,0.882001);
rgb(128pt)=(0.0102366,0.389202,0.882307);
rgb(129pt)=(0.00896036,0.390682,0.882614);
rgb(130pt)=(0.00812372,0.392089,0.88281);
rgb(131pt)=(0.00746006,0.393468,0.882963);
rgb(132pt)=(0.0067964,0.394846,0.883116);
rgb(133pt)=(0.00613273,0.396224,0.883269);
rgb(134pt)=(0.00581622,0.397562,0.88332);
rgb(135pt)=(0.00558649,0.398889,0.883346);
rgb(136pt)=(0.00535676,0.400217,0.883371);
rgb(137pt)=(0.00512703,0.401544,0.883397);
rgb(138pt)=(0.00516757,0.402804,0.883332);
rgb(139pt)=(0.00524414,0.404054,0.883256);
rgb(140pt)=(0.00532072,0.405305,0.883179);
rgb(141pt)=(0.0053973,0.406556,0.883103);
rgb(142pt)=(0.00572012,0.407757,0.882952);
rgb(143pt)=(0.00605195,0.408957,0.882799);
rgb(144pt)=(0.00638378,0.410157,0.882646);
rgb(145pt)=(0.00672643,0.411355,0.882489);
rgb(146pt)=(0.00728799,0.412529,0.882259);
rgb(147pt)=(0.00784955,0.413704,0.88203);
rgb(148pt)=(0.00841111,0.414878,0.8818);
rgb(149pt)=(0.00898919,0.416045,0.881564);
rgb(150pt)=(0.00967838,0.417168,0.881283);
rgb(151pt)=(0.0103676,0.418292,0.881002);
rgb(152pt)=(0.0110568,0.419415,0.880721);
rgb(153pt)=(0.011773,0.420532,0.880435);
rgb(154pt)=(0.0125898,0.42163,0.880129);
rgb(155pt)=(0.0134066,0.422728,0.879823);
rgb(156pt)=(0.0142234,0.423825,0.879516);
rgb(157pt)=(0.0150703,0.424915,0.879195);
rgb(158pt)=(0.0159892,0.425987,0.878838);
rgb(159pt)=(0.0169081,0.427059,0.87848);
rgb(160pt)=(0.017827,0.428132,0.878123);
rgb(161pt)=(0.0187748,0.429194,0.877746);
rgb(162pt)=(0.0197703,0.430241,0.877338);
rgb(163pt)=(0.0207658,0.431287,0.876929);
rgb(164pt)=(0.0217613,0.432334,0.876521);
rgb(165pt)=(0.0227802,0.43338,0.876113);
rgb(166pt)=(0.0238267,0.434427,0.875704);
rgb(167pt)=(0.0248733,0.435473,0.875296);
rgb(168pt)=(0.0259198,0.43652,0.874887);
rgb(169pt)=(0.0269802,0.437553,0.874451);
rgb(170pt)=(0.0280523,0.438574,0.873992);
rgb(171pt)=(0.0291243,0.439595,0.873532);
rgb(172pt)=(0.0301964,0.440616,0.873073);
rgb(173pt)=(0.0312844,0.441621,0.872614);
rgb(174pt)=(0.032382,0.442616,0.872154);
rgb(175pt)=(0.0334796,0.443612,0.871695);
rgb(176pt)=(0.0345772,0.444607,0.871235);
rgb(177pt)=(0.0357108,0.445603,0.870758);
rgb(178pt)=(0.0368595,0.446598,0.870273);
rgb(179pt)=(0.0380081,0.447594,0.869788);
rgb(180pt)=(0.0391568,0.448589,0.869303);
rgb(181pt)=(0.0402652,0.449565,0.868798);
rgb(182pt)=(0.0413628,0.450535,0.868287);
rgb(183pt)=(0.0424604,0.451505,0.867777);
rgb(184pt)=(0.043558,0.452474,0.867266);
rgb(185pt)=(0.0445889,0.453444,0.866756);
rgb(186pt)=(0.0456099,0.454414,0.866245);
rgb(187pt)=(0.0466309,0.455384,0.865735);
rgb(188pt)=(0.047652,0.456354,0.865224);
rgb(189pt)=(0.0486,0.457324,0.864714);
rgb(190pt)=(0.0495444,0.458294,0.864203);
rgb(191pt)=(0.0504889,0.459264,0.863692);
rgb(192pt)=(0.0514315,0.460234,0.863181);
rgb(193pt)=(0.0523249,0.461204,0.862645);
rgb(194pt)=(0.0532183,0.462174,0.862109);
rgb(195pt)=(0.0541117,0.463144,0.861573);
rgb(196pt)=(0.0549991,0.464111,0.861034);
rgb(197pt)=(0.0558414,0.465056,0.860472);
rgb(198pt)=(0.0566838,0.466,0.859911);
rgb(199pt)=(0.0575261,0.466944,0.859349);
rgb(200pt)=(0.0583532,0.467889,0.858793);
rgb(201pt)=(0.0591189,0.468833,0.858257);
rgb(202pt)=(0.0598847,0.469778,0.857721);
rgb(203pt)=(0.0606505,0.470722,0.857185);
rgb(204pt)=(0.0614018,0.471667,0.856641);
rgb(205pt)=(0.0621165,0.472611,0.85608);
rgb(206pt)=(0.0628312,0.473556,0.855518);
rgb(207pt)=(0.0635459,0.4745,0.854957);
rgb(208pt)=(0.064242,0.475444,0.854405);
rgb(209pt)=(0.0649057,0.476389,0.853868);
rgb(210pt)=(0.0655694,0.477333,0.853332);
rgb(211pt)=(0.066233,0.478278,0.852796);
rgb(212pt)=(0.0668625,0.479222,0.852249);
rgb(213pt)=(0.0674495,0.480167,0.851687);
rgb(214pt)=(0.0680366,0.481111,0.851126);
rgb(215pt)=(0.0686237,0.482056,0.850564);
rgb(216pt)=(0.0691838,0.483,0.850003);
rgb(217pt)=(0.0697198,0.483944,0.849441);
rgb(218pt)=(0.0702559,0.484889,0.84888);
rgb(219pt)=(0.0707919,0.485833,0.848318);
rgb(220pt)=(0.0712967,0.486778,0.847772);
rgb(221pt)=(0.0717817,0.487722,0.847236);
rgb(222pt)=(0.0722667,0.488667,0.8467);
rgb(223pt)=(0.0727517,0.489611,0.846164);
rgb(224pt)=(0.0732012,0.490573,0.845628);
rgb(225pt)=(0.0736351,0.491543,0.845092);
rgb(226pt)=(0.0740691,0.492513,0.844556);
rgb(227pt)=(0.074503,0.493483,0.84402);
rgb(228pt)=(0.0748973,0.494433,0.843484);
rgb(229pt)=(0.0752802,0.495378,0.842948);
rgb(230pt)=(0.0756631,0.496322,0.842412);
rgb(231pt)=(0.0760459,0.497267,0.841876);
rgb(232pt)=(0.0763631,0.498233,0.841362);
rgb(233pt)=(0.0766694,0.499203,0.840851);
rgb(234pt)=(0.0769757,0.500173,0.840341);
rgb(235pt)=(0.077282,0.501143,0.83983);
rgb(236pt)=(0.0775162,0.502137,0.83932);
rgb(237pt)=(0.0777459,0.503132,0.838809);
rgb(238pt)=(0.0779757,0.504128,0.838298);
rgb(239pt)=(0.0782042,0.505123,0.837789);
rgb(240pt)=(0.0783829,0.506093,0.837304);
rgb(241pt)=(0.0785616,0.507063,0.836819);
rgb(242pt)=(0.0787402,0.508033,0.836334);
rgb(243pt)=(0.0789135,0.509008,0.835851);
rgb(244pt)=(0.0790411,0.510029,0.835392);
rgb(245pt)=(0.0791688,0.51105,0.834932);
rgb(246pt)=(0.0792964,0.512071,0.834473);
rgb(247pt)=(0.0794048,0.513092,0.834018);
rgb(248pt)=(0.0794303,0.514113,0.833584);
rgb(249pt)=(0.0794559,0.515134,0.83315);
rgb(250pt)=(0.0794814,0.516155,0.832717);
rgb(251pt)=(0.0794862,0.517183,0.832289);
rgb(252pt)=(0.0794351,0.51823,0.831881);
rgb(253pt)=(0.0793841,0.519276,0.831473);
rgb(254pt)=(0.079333,0.520323,0.831064);
rgb(255pt)=(0.079255,0.521369,0.830665);
rgb(256pt)=(0.0791273,0.522416,0.830282);
rgb(257pt)=(0.0789997,0.523462,0.829899);
rgb(258pt)=(0.0788721,0.524509,0.829516);
rgb(259pt)=(0.0786889,0.525589,0.829156);
rgb(260pt)=(0.0784336,0.526712,0.828824);
rgb(261pt)=(0.0781784,0.527835,0.828492);
rgb(262pt)=(0.0779231,0.528958,0.82816);
rgb(263pt)=(0.077615,0.530081,0.827868);
rgb(264pt)=(0.0772577,0.531205,0.827613);
rgb(265pt)=(0.0769003,0.532328,0.827357);
rgb(266pt)=(0.0765429,0.533451,0.827102);
rgb(267pt)=(0.0761243,0.534589,0.826862);
rgb(268pt)=(0.0756649,0.535738,0.826632);
rgb(269pt)=(0.0752054,0.536886,0.826403);
rgb(270pt)=(0.0747459,0.538035,0.826173);
rgb(271pt)=(0.0742168,0.539219,0.825961);
rgb(272pt)=(0.0736553,0.540418,0.825756);
rgb(273pt)=(0.0730937,0.541618,0.825552);
rgb(274pt)=(0.0725321,0.542818,0.825348);
rgb(275pt)=(0.0718925,0.544037,0.825183);
rgb(276pt)=(0.0712288,0.545262,0.82503);
rgb(277pt)=(0.0705652,0.546487,0.824877);
rgb(278pt)=(0.0699015,0.547713,0.824723);
rgb(279pt)=(0.0691514,0.548938,0.824614);
rgb(280pt)=(0.0683856,0.550163,0.824511);
rgb(281pt)=(0.0676198,0.551388,0.824409);
rgb(282pt)=(0.0668541,0.552614,0.824307);
rgb(283pt)=(0.0660408,0.553886,0.824205);
rgb(284pt)=(0.065224,0.555162,0.824103);
rgb(285pt)=(0.0644072,0.556439,0.824001);
rgb(286pt)=(0.0635892,0.557715,0.823899);
rgb(287pt)=(0.0626703,0.558991,0.823848);
rgb(288pt)=(0.0617514,0.560268,0.823797);
rgb(289pt)=(0.0608324,0.561544,0.823746);
rgb(290pt)=(0.0599087,0.56282,0.823693);
rgb(291pt)=(0.0589387,0.564096,0.823616);
rgb(292pt)=(0.0579688,0.565373,0.82354);
rgb(293pt)=(0.0569988,0.566649,0.823463);
rgb(294pt)=(0.0560243,0.567925,0.823386);
rgb(295pt)=(0.0550288,0.569202,0.82331);
rgb(296pt)=(0.0540333,0.570478,0.823233);
rgb(297pt)=(0.0530378,0.571754,0.823157);
rgb(298pt)=(0.0520423,0.57303,0.82308);
rgb(299pt)=(0.0510468,0.574307,0.823004);
rgb(300pt)=(0.0500514,0.575583,0.822927);
rgb(301pt)=(0.0490559,0.576859,0.82285);
rgb(302pt)=(0.0480604,0.578127,0.822756);
rgb(303pt)=(0.0470649,0.579377,0.822629);
rgb(304pt)=(0.0460694,0.580628,0.822501);
rgb(305pt)=(0.0450739,0.581879,0.822374);
rgb(306pt)=(0.0441,0.583119,0.822235);
rgb(307pt)=(0.0431556,0.584344,0.822082);
rgb(308pt)=(0.0422111,0.585569,0.821929);
rgb(309pt)=(0.0412667,0.586795,0.821776);
rgb(310pt)=(0.0403351,0.58802,0.821597);
rgb(311pt)=(0.0394162,0.589245,0.821392);
rgb(312pt)=(0.0384973,0.59047,0.821188);
rgb(313pt)=(0.0375784,0.591695,0.820984);
rgb(314pt)=(0.0367495,0.592891,0.820735);
rgb(315pt)=(0.0359838,0.594065,0.820454);
rgb(316pt)=(0.035218,0.595239,0.820173);
rgb(317pt)=(0.0344523,0.596413,0.819892);
rgb(318pt)=(0.0337721,0.597553,0.819595);
rgb(319pt)=(0.0331339,0.598676,0.819288);
rgb(320pt)=(0.0324958,0.599799,0.818982);
rgb(321pt)=(0.0318577,0.600923,0.818676);
rgb(322pt)=(0.0312964,0.602026,0.818312);
rgb(323pt)=(0.0307604,0.603124,0.817929);
rgb(324pt)=(0.0302243,0.604222,0.817546);
rgb(325pt)=(0.0296883,0.605319,0.817163);
rgb(326pt)=(0.0292375,0.606395,0.816738);
rgb(327pt)=(0.0288036,0.607468,0.816304);
rgb(328pt)=(0.0283697,0.60854,0.81587);
rgb(329pt)=(0.0279357,0.609612,0.815436);
rgb(330pt)=(0.0275721,0.610637,0.814955);
rgb(331pt)=(0.0272147,0.611658,0.81447);
rgb(332pt)=(0.0268574,0.612679,0.813985);
rgb(333pt)=(0.0265,0.6137,0.8135);
rgb(334pt)=(0.0262447,0.614695,0.812964);
rgb(335pt)=(0.0259895,0.615691,0.812428);
rgb(336pt)=(0.0257342,0.616686,0.811892);
rgb(337pt)=(0.0254853,0.61768,0.811352);
rgb(338pt)=(0.0253066,0.61865,0.810765);
rgb(339pt)=(0.0251279,0.61962,0.810177);
rgb(340pt)=(0.0249492,0.62059,0.80959);
rgb(341pt)=(0.024779,0.621551,0.808995);
rgb(342pt)=(0.0246514,0.62247,0.808357);
rgb(343pt)=(0.0245237,0.623389,0.807719);
rgb(344pt)=(0.0243961,0.624308,0.80708);
rgb(345pt)=(0.0242748,0.625221,0.80643);
rgb(346pt)=(0.0241727,0.626114,0.805741);
rgb(347pt)=(0.0240706,0.627008,0.805051);
rgb(348pt)=(0.0239685,0.627901,0.804362);
rgb(349pt)=(0.0238832,0.628786,0.803656);
rgb(350pt)=(0.0238321,0.629654,0.802916);
rgb(351pt)=(0.0237811,0.630522,0.802176);
rgb(352pt)=(0.02373,0.631389,0.801435);
rgb(353pt)=(0.023679,0.632247,0.800685);
rgb(354pt)=(0.0236279,0.633089,0.799919);
rgb(355pt)=(0.0235769,0.633932,0.799153);
rgb(356pt)=(0.0235258,0.634774,0.798387);
rgb(357pt)=(0.0234748,0.635604,0.797596);
rgb(358pt)=(0.0234237,0.63642,0.79678);
rgb(359pt)=(0.0233727,0.637237,0.795963);
rgb(360pt)=(0.0233216,0.638054,0.795146);
rgb(361pt)=(0.0232706,0.638856,0.794329);
rgb(362pt)=(0.0232195,0.639647,0.793512);
rgb(363pt)=(0.0231685,0.640439,0.792695);
rgb(364pt)=(0.0231174,0.64123,0.791879);
rgb(365pt)=(0.0230832,0.642005,0.791011);
rgb(366pt)=(0.0230577,0.64277,0.790118);
rgb(367pt)=(0.0230321,0.643536,0.789225);
rgb(368pt)=(0.0230066,0.644302,0.788331);
rgb(369pt)=(0.0229811,0.645049,0.787438);
rgb(370pt)=(0.0229556,0.645789,0.786544);
rgb(371pt)=(0.02293,0.646529,0.785651);
rgb(372pt)=(0.0229045,0.647269,0.784758);
rgb(373pt)=(0.022858,0.64801,0.783843);
rgb(374pt)=(0.0228069,0.64875,0.782924);
rgb(375pt)=(0.0227559,0.64949,0.782005);
rgb(376pt)=(0.0227048,0.65023,0.781086);
rgb(377pt)=(0.0227,0.650947,0.780144);
rgb(378pt)=(0.0227,0.651662,0.7792);
rgb(379pt)=(0.0227,0.652377,0.778256);
rgb(380pt)=(0.0227,0.653092,0.777311);
rgb(381pt)=(0.0228261,0.653781,0.776341);
rgb(382pt)=(0.0229538,0.65447,0.775371);
rgb(383pt)=(0.0230814,0.655159,0.774402);
rgb(384pt)=(0.0232108,0.655849,0.77343);
rgb(385pt)=(0.023364,0.656538,0.772434);
rgb(386pt)=(0.0235171,0.657227,0.771439);
rgb(387pt)=(0.0236703,0.657916,0.770443);
rgb(388pt)=(0.0238312,0.658602,0.769444);
rgb(389pt)=(0.0240354,0.659265,0.768423);
rgb(390pt)=(0.0242396,0.659929,0.767402);
rgb(391pt)=(0.0244438,0.660592,0.766381);
rgb(392pt)=(0.0247021,0.661256,0.765354);
rgb(393pt)=(0.025136,0.66192,0.764307);
rgb(394pt)=(0.02557,0.662583,0.763261);
rgb(395pt)=(0.0260039,0.663247,0.762214);
rgb(396pt)=(0.0264541,0.663911,0.761168);
rgb(397pt)=(0.026939,0.664574,0.760121);
rgb(398pt)=(0.027424,0.665238,0.759074);
rgb(399pt)=(0.027909,0.665902,0.758028);
rgb(400pt)=(0.028445,0.666555,0.756971);
rgb(401pt)=(0.0290577,0.667193,0.755899);
rgb(402pt)=(0.0296703,0.667832,0.754827);
rgb(403pt)=(0.0302829,0.66847,0.753755);
rgb(404pt)=(0.030994,0.669095,0.752683);
rgb(405pt)=(0.0318108,0.669708,0.751611);
rgb(406pt)=(0.0326276,0.670321,0.750539);
rgb(407pt)=(0.0334444,0.670933,0.749467);
rgb(408pt)=(0.0343045,0.67156,0.748366);
rgb(409pt)=(0.0351979,0.672198,0.747243);
rgb(410pt)=(0.0360913,0.672837,0.74612);
rgb(411pt)=(0.0369847,0.673475,0.744996);
rgb(412pt)=(0.0380432,0.674096,0.743873);
rgb(413pt)=(0.0391919,0.674709,0.74275);
rgb(414pt)=(0.0403405,0.675322,0.741627);
rgb(415pt)=(0.0414892,0.675934,0.740504);
rgb(416pt)=(0.0427123,0.676528,0.739381);
rgb(417pt)=(0.0439631,0.677115,0.738258);
rgb(418pt)=(0.0452138,0.677702,0.737135);
rgb(419pt)=(0.0464646,0.678289,0.736011);
rgb(420pt)=(0.0477153,0.678897,0.734868);
rgb(421pt)=(0.0489661,0.67951,0.733719);
rgb(422pt)=(0.0502168,0.680123,0.73257);
rgb(423pt)=(0.0514676,0.680735,0.731422);
rgb(424pt)=(0.0529237,0.681325,0.73025);
rgb(425pt)=(0.0544042,0.681912,0.729076);
rgb(426pt)=(0.0558847,0.682499,0.727902);
rgb(427pt)=(0.0573652,0.683086,0.726728);
rgb(428pt)=(0.0587709,0.683673,0.725553);
rgb(429pt)=(0.0601748,0.68426,0.724379);
rgb(430pt)=(0.0615787,0.684847,0.723205);
rgb(431pt)=(0.0629946,0.685435,0.722028);
rgb(432pt)=(0.0646027,0.686022,0.720803);
rgb(433pt)=(0.0662108,0.686609,0.719577);
rgb(434pt)=(0.0678189,0.687196,0.718352);
rgb(435pt)=(0.069427,0.687779,0.717131);
rgb(436pt)=(0.0710351,0.688341,0.715931);
rgb(437pt)=(0.0726432,0.688902,0.714731);
rgb(438pt)=(0.0742514,0.689464,0.713532);
rgb(439pt)=(0.0758709,0.690026,0.712326);
rgb(440pt)=(0.07753,0.690587,0.711101);
rgb(441pt)=(0.0791892,0.691149,0.709876);
rgb(442pt)=(0.0808483,0.69171,0.70865);
rgb(443pt)=(0.0825387,0.692272,0.707417);
rgb(444pt)=(0.0843,0.692833,0.706167);
rgb(445pt)=(0.0860613,0.693395,0.704916);
rgb(446pt)=(0.0878225,0.693956,0.703665);
rgb(447pt)=(0.089564,0.694518,0.702405);
rgb(448pt)=(0.0912742,0.69508,0.701128);
rgb(449pt)=(0.0929844,0.695641,0.699852);
rgb(450pt)=(0.0946946,0.696203,0.698576);
rgb(451pt)=(0.0965009,0.696752,0.697299);
rgb(452pt)=(0.0984153,0.697288,0.696023);
rgb(453pt)=(0.10033,0.697824,0.694747);
rgb(454pt)=(0.102244,0.69836,0.693471);
rgb(455pt)=(0.10413,0.698896,0.69218);
rgb(456pt)=(0.105994,0.699432,0.690878);
rgb(457pt)=(0.107857,0.699968,0.689577);
rgb(458pt)=(0.10972,0.700505,0.688275);
rgb(459pt)=(0.111632,0.701041,0.686973);
rgb(460pt)=(0.113572,0.701577,0.685671);
rgb(461pt)=(0.115512,0.702113,0.684369);
rgb(462pt)=(0.117452,0.702649,0.683068);
rgb(463pt)=(0.119429,0.703185,0.681747);
rgb(464pt)=(0.12142,0.703721,0.68042);
rgb(465pt)=(0.123411,0.704257,0.679093);
rgb(466pt)=(0.125402,0.704793,0.677765);
rgb(467pt)=(0.127372,0.705308,0.676438);
rgb(468pt)=(0.129338,0.705819,0.675111);
rgb(469pt)=(0.131303,0.706329,0.673783);
rgb(470pt)=(0.133269,0.70684,0.672456);
rgb(471pt)=(0.135369,0.70735,0.671084);
rgb(472pt)=(0.137488,0.707861,0.669705);
rgb(473pt)=(0.139607,0.708371,0.668327);
rgb(474pt)=(0.141725,0.708882,0.666949);
rgb(475pt)=(0.143795,0.709392,0.665595);
rgb(476pt)=(0.145862,0.709903,0.664242);
rgb(477pt)=(0.14793,0.710414,0.662889);
rgb(478pt)=(0.150003,0.710924,0.661534);
rgb(479pt)=(0.152198,0.711435,0.66013);
rgb(480pt)=(0.154394,0.711945,0.658726);
rgb(481pt)=(0.156589,0.712456,0.657322);
rgb(482pt)=(0.158784,0.712963,0.655922);
rgb(483pt)=(0.160979,0.713448,0.654543);
rgb(484pt)=(0.163174,0.713933,0.653165);
rgb(485pt)=(0.16537,0.714418,0.651786);
rgb(486pt)=(0.16757,0.714908,0.650397);
rgb(487pt)=(0.169791,0.715419,0.648968);
rgb(488pt)=(0.172012,0.715929,0.647538);
rgb(489pt)=(0.174232,0.71644,0.646109);
rgb(490pt)=(0.176483,0.716935,0.64468);
rgb(491pt)=(0.178806,0.717395,0.64325);
rgb(492pt)=(0.181129,0.717854,0.641821);
rgb(493pt)=(0.183452,0.718314,0.640391);
rgb(494pt)=(0.185755,0.718783,0.638952);
rgb(495pt)=(0.188027,0.719268,0.637497);
rgb(496pt)=(0.190299,0.719753,0.636042);
rgb(497pt)=(0.192571,0.720238,0.634587);
rgb(498pt)=(0.194913,0.720711,0.633132);
rgb(499pt)=(0.197338,0.72117,0.631677);
rgb(500pt)=(0.199762,0.72163,0.630223);
rgb(501pt)=(0.202187,0.722089,0.628768);
rgb(502pt)=(0.204612,0.722549,0.627299);
rgb(503pt)=(0.207037,0.723008,0.625818);
rgb(504pt)=(0.209462,0.723468,0.624338);
rgb(505pt)=(0.211887,0.723927,0.622857);
rgb(506pt)=(0.214328,0.724386,0.621377);
rgb(507pt)=(0.216778,0.724846,0.619896);
rgb(508pt)=(0.219229,0.725305,0.618416);
rgb(509pt)=(0.221679,0.725765,0.616935);
rgb(510pt)=(0.224202,0.726188,0.615455);
rgb(511pt)=(0.226754,0.726597,0.613974);
rgb(512pt)=(0.229307,0.727005,0.612494);
rgb(513pt)=(0.231859,0.727414,0.611014);
rgb(514pt)=(0.234392,0.727842,0.609513);
rgb(515pt)=(0.236919,0.728276,0.608007);
rgb(516pt)=(0.239446,0.72871,0.606501);
rgb(517pt)=(0.241973,0.729144,0.604995);
rgb(518pt)=(0.244611,0.729556,0.603467);
rgb(519pt)=(0.247266,0.729964,0.601935);
rgb(520pt)=(0.24992,0.730372,0.600404);
rgb(521pt)=(0.252575,0.730781,0.598872);
rgb(522pt)=(0.25523,0.731189,0.597365);
rgb(523pt)=(0.257884,0.731598,0.595859);
rgb(524pt)=(0.260539,0.732006,0.594353);
rgb(525pt)=(0.263194,0.732414,0.592846);
rgb(526pt)=(0.265848,0.732796,0.591314);
rgb(527pt)=(0.268503,0.733179,0.589783);
rgb(528pt)=(0.271158,0.733562,0.588251);
rgb(529pt)=(0.27383,0.733945,0.58672);
rgb(530pt)=(0.276638,0.734328,0.585188);
rgb(531pt)=(0.279446,0.734711,0.583657);
rgb(532pt)=(0.282254,0.735094,0.582125);
rgb(533pt)=(0.285051,0.735471,0.580594);
rgb(534pt)=(0.287808,0.735829,0.579062);
rgb(535pt)=(0.290565,0.736186,0.577531);
rgb(536pt)=(0.293322,0.736544,0.575999);
rgb(537pt)=(0.2961,0.736894,0.574468);
rgb(538pt)=(0.298933,0.737226,0.572936);
rgb(539pt)=(0.301767,0.737557,0.571405);
rgb(540pt)=(0.3046,0.737889,0.569873);
rgb(541pt)=(0.307452,0.738221,0.568351);
rgb(542pt)=(0.310336,0.738553,0.566845);
rgb(543pt)=(0.313221,0.738885,0.565339);
rgb(544pt)=(0.316105,0.739217,0.563833);
rgb(545pt)=(0.318978,0.739537,0.562315);
rgb(546pt)=(0.321837,0.739843,0.560784);
rgb(547pt)=(0.324696,0.74015,0.559252);
rgb(548pt)=(0.327555,0.740456,0.557721);
rgb(549pt)=(0.330468,0.740749,0.556216);
rgb(550pt)=(0.333429,0.741029,0.554736);
rgb(551pt)=(0.336389,0.74131,0.553255);
rgb(552pt)=(0.33935,0.741591,0.551775);
rgb(553pt)=(0.342296,0.741872,0.550279);
rgb(554pt)=(0.345231,0.742153,0.548773);
rgb(555pt)=(0.348167,0.742433,0.547267);
rgb(556pt)=(0.351102,0.742714,0.545761);
rgb(557pt)=(0.354038,0.742977,0.54429);
rgb(558pt)=(0.356973,0.743232,0.542835);
rgb(559pt)=(0.359908,0.743488,0.54138);
rgb(560pt)=(0.362844,0.743743,0.539925);
rgb(561pt)=(0.365839,0.743959,0.53847);
rgb(562pt)=(0.368851,0.744163,0.537015);
rgb(563pt)=(0.371863,0.744367,0.53556);
rgb(564pt)=(0.374875,0.744571,0.534105);
rgb(565pt)=(0.377843,0.744775,0.532672);
rgb(566pt)=(0.380804,0.74498,0.531243);
rgb(567pt)=(0.383765,0.745184,0.529814);
rgb(568pt)=(0.386726,0.745388,0.528384);
rgb(569pt)=(0.389711,0.745568,0.527003);
rgb(570pt)=(0.392697,0.745747,0.525624);
rgb(571pt)=(0.395684,0.745926,0.524246);
rgb(572pt)=(0.39867,0.746104,0.522868);
rgb(573pt)=(0.401657,0.746257,0.521489);
rgb(574pt)=(0.404643,0.74641,0.520111);
rgb(575pt)=(0.40763,0.746563,0.518732);
rgb(576pt)=(0.410611,0.746716,0.517359);
rgb(577pt)=(0.413546,0.746869,0.516032);
rgb(578pt)=(0.416482,0.747023,0.514705);
rgb(579pt)=(0.419417,0.747176,0.513377);
rgb(580pt)=(0.422357,0.747319,0.512055);
rgb(581pt)=(0.425318,0.747421,0.510753);
rgb(582pt)=(0.428279,0.747523,0.509451);
rgb(583pt)=(0.43124,0.747626,0.50815);
rgb(584pt)=(0.43418,0.747735,0.506848);
rgb(585pt)=(0.437065,0.747862,0.505546);
rgb(586pt)=(0.439949,0.74799,0.504244);
rgb(587pt)=(0.442834,0.748117,0.502942);
rgb(588pt)=(0.445727,0.748227,0.501659);
rgb(589pt)=(0.448637,0.748304,0.500408);
rgb(590pt)=(0.451547,0.74838,0.499157);
rgb(591pt)=(0.454457,0.748457,0.497906);
rgb(592pt)=(0.457333,0.748522,0.496667);
rgb(593pt)=(0.460167,0.748573,0.495441);
rgb(594pt)=(0.463,0.748624,0.494216);
rgb(595pt)=(0.465833,0.748675,0.492991);
rgb(596pt)=(0.468667,0.748726,0.491779);
rgb(597pt)=(0.4715,0.748777,0.490579);
rgb(598pt)=(0.474333,0.748829,0.48938);
rgb(599pt)=(0.477167,0.74888,0.48818);
rgb(600pt)=(0.479969,0.748931,0.486995);
rgb(601pt)=(0.482752,0.748982,0.485821);
rgb(602pt)=(0.485534,0.749033,0.484647);
rgb(603pt)=(0.488316,0.749084,0.483473);
rgb(604pt)=(0.491081,0.7491,0.482316);
rgb(605pt)=(0.493838,0.7491,0.481168);
rgb(606pt)=(0.496595,0.7491,0.480019);
rgb(607pt)=(0.499351,0.7491,0.47887);
rgb(608pt)=(0.502069,0.74912,0.477722);
rgb(609pt)=(0.504775,0.749145,0.476573);
rgb(610pt)=(0.50748,0.749171,0.475424);
rgb(611pt)=(0.510186,0.749196,0.474276);
rgb(612pt)=(0.512892,0.7492,0.47317);
rgb(613pt)=(0.515598,0.7492,0.472073);
rgb(614pt)=(0.518303,0.7492,0.470975);
rgb(615pt)=(0.521009,0.7492,0.469877);
rgb(616pt)=(0.523644,0.749176,0.46878);
rgb(617pt)=(0.526273,0.749151,0.467682);
rgb(618pt)=(0.528902,0.749125,0.466585);
rgb(619pt)=(0.531531,0.7491,0.465487);
rgb(620pt)=(0.53416,0.749074,0.464415);
rgb(621pt)=(0.536789,0.749049,0.463343);
rgb(622pt)=(0.539418,0.749023,0.462271);
rgb(623pt)=(0.542043,0.748998,0.461199);
rgb(624pt)=(0.544621,0.748972,0.460127);
rgb(625pt)=(0.547199,0.748947,0.459055);
rgb(626pt)=(0.549777,0.748921,0.457983);
rgb(627pt)=(0.55235,0.748891,0.45692);
rgb(628pt)=(0.554903,0.74884,0.455899);
rgb(629pt)=(0.557456,0.748789,0.454878);
rgb(630pt)=(0.560008,0.748738,0.453857);
rgb(631pt)=(0.562554,0.748687,0.452829);
rgb(632pt)=(0.565081,0.748636,0.451783);
rgb(633pt)=(0.567608,0.748585,0.450736);
rgb(634pt)=(0.570135,0.748534,0.449689);
rgb(635pt)=(0.572653,0.748474,0.44866);
rgb(636pt)=(0.575155,0.748397,0.447665);
rgb(637pt)=(0.577656,0.748321,0.446669);
rgb(638pt)=(0.580158,0.748244,0.445674);
rgb(639pt)=(0.582649,0.748168,0.444678);
rgb(640pt)=(0.585125,0.748091,0.443683);
rgb(641pt)=(0.587601,0.748014,0.442687);
rgb(642pt)=(0.590077,0.747938,0.441692);
rgb(643pt)=(0.59254,0.747861,0.440709);
rgb(644pt)=(0.59499,0.747785,0.439739);
rgb(645pt)=(0.597441,0.747708,0.438769);
rgb(646pt)=(0.599891,0.747632,0.437799);
rgb(647pt)=(0.602311,0.747555,0.436814);
rgb(648pt)=(0.604711,0.747478,0.435819);
rgb(649pt)=(0.60711,0.747402,0.434823);
rgb(650pt)=(0.60951,0.747325,0.433828);
rgb(651pt)=(0.611909,0.747232,0.432867);
rgb(652pt)=(0.614308,0.747129,0.431922);
rgb(653pt)=(0.616708,0.747027,0.430978);
rgb(654pt)=(0.619107,0.746925,0.430033);
rgb(655pt)=(0.621487,0.746823,0.429089);
rgb(656pt)=(0.623861,0.746721,0.428144);
rgb(657pt)=(0.626235,0.746619,0.4272);
rgb(658pt)=(0.628609,0.746517,0.426256);
rgb(659pt)=(0.630962,0.746393,0.425311);
rgb(660pt)=(0.63331,0.746266,0.424367);
rgb(661pt)=(0.635658,0.746138,0.423422);
rgb(662pt)=(0.638007,0.746011,0.422478);
rgb(663pt)=(0.640332,0.745906,0.421557);
rgb(664pt)=(0.642654,0.745804,0.420638);
rgb(665pt)=(0.644977,0.745702,0.419719);
rgb(666pt)=(0.6473,0.7456,0.4188);
rgb(667pt)=(0.649623,0.745472,0.417881);
rgb(668pt)=(0.651946,0.745345,0.416962);
rgb(669pt)=(0.654268,0.745217,0.416043);
rgb(670pt)=(0.656587,0.745089,0.415124);
rgb(671pt)=(0.658859,0.744962,0.414205);
rgb(672pt)=(0.661131,0.744834,0.413286);
rgb(673pt)=(0.663402,0.744707,0.412368);
rgb(674pt)=(0.665674,0.744579,0.411453);
rgb(675pt)=(0.667946,0.744451,0.410559);
rgb(676pt)=(0.670218,0.744324,0.409666);
rgb(677pt)=(0.672489,0.744196,0.408773);
rgb(678pt)=(0.674755,0.744062,0.407879);
rgb(679pt)=(0.677001,0.743909,0.406986);
rgb(680pt)=(0.679247,0.743756,0.406092);
rgb(681pt)=(0.681494,0.743603,0.405199);
rgb(682pt)=(0.68374,0.743458,0.404306);
rgb(683pt)=(0.685986,0.74333,0.403412);
rgb(684pt)=(0.688232,0.743203,0.402519);
rgb(685pt)=(0.690479,0.743075,0.401626);
rgb(686pt)=(0.692704,0.742937,0.400732);
rgb(687pt)=(0.694899,0.742784,0.399839);
rgb(688pt)=(0.697094,0.742631,0.398945);
rgb(689pt)=(0.699289,0.742477,0.398052);
rgb(690pt)=(0.701497,0.742324,0.397171);
rgb(691pt)=(0.703718,0.742171,0.396303);
rgb(692pt)=(0.705939,0.742018,0.395435);
rgb(693pt)=(0.708159,0.741865,0.394568);
rgb(694pt)=(0.710351,0.741712,0.3937);
rgb(695pt)=(0.71252,0.741559,0.392832);
rgb(696pt)=(0.71469,0.741405,0.391964);
rgb(697pt)=(0.71686,0.741252,0.391096);
rgb(698pt)=(0.719029,0.741082,0.390228);
rgb(699pt)=(0.721199,0.740904,0.38936);
rgb(700pt)=(0.723369,0.740725,0.388492);
rgb(701pt)=(0.725538,0.740546,0.387625);
rgb(702pt)=(0.727708,0.740386,0.386757);
rgb(703pt)=(0.729878,0.740233,0.385889);
rgb(704pt)=(0.732047,0.74008,0.385021);
rgb(705pt)=(0.734217,0.739927,0.384153);
rgb(706pt)=(0.736366,0.739753,0.383285);
rgb(707pt)=(0.73851,0.739574,0.382417);
rgb(708pt)=(0.740654,0.739395,0.38155);
rgb(709pt)=(0.742798,0.739217,0.380682);
rgb(710pt)=(0.744919,0.739038,0.379837);
rgb(711pt)=(0.747038,0.738859,0.378995);
rgb(712pt)=(0.749156,0.738681,0.378152);
rgb(713pt)=(0.751275,0.738502,0.37731);
rgb(714pt)=(0.753394,0.738323,0.376442);
rgb(715pt)=(0.755512,0.738145,0.375574);
rgb(716pt)=(0.757631,0.737966,0.374707);
rgb(717pt)=(0.75975,0.737789,0.373841);
rgb(718pt)=(0.761868,0.737636,0.372998);
rgb(719pt)=(0.763987,0.737483,0.372156);
rgb(720pt)=(0.766105,0.73733,0.371314);
rgb(721pt)=(0.76822,0.737169,0.370471);
rgb(722pt)=(0.770313,0.736965,0.369629);
rgb(723pt)=(0.772406,0.73676,0.368786);
rgb(724pt)=(0.774499,0.736556,0.367944);
rgb(725pt)=(0.776592,0.736358,0.367096);
rgb(726pt)=(0.778686,0.736179,0.366228);
rgb(727pt)=(0.780779,0.736001,0.36536);
rgb(728pt)=(0.782872,0.735822,0.364492);
rgb(729pt)=(0.784957,0.735643,0.363632);
rgb(730pt)=(0.787024,0.735465,0.36279);
rgb(731pt)=(0.789092,0.735286,0.361948);
rgb(732pt)=(0.791159,0.735107,0.361105);
rgb(733pt)=(0.793227,0.734929,0.360263);
rgb(734pt)=(0.795295,0.73475,0.359421);
rgb(735pt)=(0.797362,0.734571,0.358578);
rgb(736pt)=(0.79943,0.734392,0.357736);
rgb(737pt)=(0.801485,0.734214,0.356881);
rgb(738pt)=(0.803527,0.734035,0.356014);
rgb(739pt)=(0.805569,0.733856,0.355146);
rgb(740pt)=(0.807611,0.733678,0.354278);
rgb(741pt)=(0.809668,0.733499,0.353424);
rgb(742pt)=(0.811735,0.73332,0.352582);
rgb(743pt)=(0.813803,0.733142,0.35174);
rgb(744pt)=(0.81587,0.732963,0.350897);
rgb(745pt)=(0.817921,0.732784,0.350038);
rgb(746pt)=(0.819963,0.732606,0.349171);
rgb(747pt)=(0.822005,0.732427,0.348303);
rgb(748pt)=(0.824047,0.732248,0.347435);
rgb(749pt)=(0.826071,0.73207,0.346567);
rgb(750pt)=(0.828087,0.731891,0.345699);
rgb(751pt)=(0.830104,0.731712,0.344831);
rgb(752pt)=(0.83212,0.731534,0.343963);
rgb(753pt)=(0.834158,0.731355,0.343095);
rgb(754pt)=(0.8362,0.731176,0.342228);
rgb(755pt)=(0.838242,0.730998,0.34136);
rgb(756pt)=(0.840284,0.730819,0.340492);
rgb(757pt)=(0.842303,0.73064,0.339624);
rgb(758pt)=(0.84432,0.730462,0.338756);
rgb(759pt)=(0.846336,0.730283,0.337888);
rgb(760pt)=(0.848353,0.730104,0.33702);
rgb(761pt)=(0.850369,0.729926,0.336153);
rgb(762pt)=(0.852386,0.729747,0.335285);
rgb(763pt)=(0.854402,0.729568,0.334417);
rgb(764pt)=(0.856419,0.729391,0.333546);
rgb(765pt)=(0.858435,0.729238,0.332627);
rgb(766pt)=(0.860452,0.729085,0.331708);
rgb(767pt)=(0.862468,0.728932,0.330789);
rgb(768pt)=(0.864481,0.728778,0.329874);
rgb(769pt)=(0.866472,0.728625,0.32898);
rgb(770pt)=(0.868463,0.728472,0.328087);
rgb(771pt)=(0.870454,0.728319,0.327194);
rgb(772pt)=(0.872445,0.728166,0.326295);
rgb(773pt)=(0.874436,0.728013,0.325376);
rgb(774pt)=(0.876427,0.727859,0.324457);
rgb(775pt)=(0.878418,0.727706,0.323538);
rgb(776pt)=(0.880417,0.727561,0.322619);
rgb(777pt)=(0.882433,0.727433,0.3217);
rgb(778pt)=(0.88445,0.727306,0.320781);
rgb(779pt)=(0.886466,0.727178,0.319862);
rgb(780pt)=(0.888463,0.72705,0.318933);
rgb(781pt)=(0.890429,0.726923,0.317989);
rgb(782pt)=(0.892394,0.726795,0.317044);
rgb(783pt)=(0.894359,0.726668,0.3161);
rgb(784pt)=(0.896337,0.726552,0.315132);
rgb(785pt)=(0.898328,0.72645,0.314136);
rgb(786pt)=(0.900319,0.726348,0.313141);
rgb(787pt)=(0.90231,0.726246,0.312145);
rgb(788pt)=(0.904301,0.726158,0.31115);
rgb(789pt)=(0.906292,0.726081,0.310154);
rgb(790pt)=(0.908283,0.726005,0.309159);
rgb(791pt)=(0.910274,0.725928,0.308163);
rgb(792pt)=(0.912249,0.725851,0.307151);
rgb(793pt)=(0.914214,0.725775,0.30613);
rgb(794pt)=(0.91618,0.725698,0.305109);
rgb(795pt)=(0.918145,0.725622,0.304088);
rgb(796pt)=(0.920111,0.7256,0.303031);
rgb(797pt)=(0.922076,0.7256,0.301959);
rgb(798pt)=(0.924041,0.7256,0.300886);
rgb(799pt)=(0.926007,0.7256,0.299814);
rgb(800pt)=(0.927972,0.7256,0.298722);
rgb(801pt)=(0.929938,0.7256,0.297624);
rgb(802pt)=(0.931903,0.7256,0.296527);
rgb(803pt)=(0.933869,0.7256,0.295429);
rgb(804pt)=(0.935812,0.725668,0.294264);
rgb(805pt)=(0.937752,0.725744,0.29309);
rgb(806pt)=(0.939692,0.725821,0.291916);
rgb(807pt)=(0.941632,0.725897,0.290741);
rgb(808pt)=(0.943571,0.726023,0.289518);
rgb(809pt)=(0.945511,0.726151,0.288293);
rgb(810pt)=(0.947451,0.726278,0.287068);
rgb(811pt)=(0.949389,0.726411,0.285839);
rgb(812pt)=(0.951278,0.726641,0.284537);
rgb(813pt)=(0.953167,0.72687,0.283235);
rgb(814pt)=(0.955056,0.7271,0.281933);
rgb(815pt)=(0.956938,0.72734,0.280622);
rgb(816pt)=(0.958776,0.727646,0.279243);
rgb(817pt)=(0.960614,0.727952,0.277865);
rgb(818pt)=(0.962451,0.728259,0.276486);
rgb(819pt)=(0.964273,0.728597,0.275086);
rgb(820pt)=(0.966034,0.729057,0.273606);
rgb(821pt)=(0.967795,0.729516,0.272126);
rgb(822pt)=(0.969557,0.729976,0.270645);
rgb(823pt)=(0.971288,0.730473,0.269135);
rgb(824pt)=(0.972947,0.73106,0.267552);
rgb(825pt)=(0.974606,0.731647,0.265969);
rgb(826pt)=(0.976265,0.732234,0.264387);
rgb(827pt)=(0.977857,0.732879,0.262785);
rgb(828pt)=(0.979338,0.733619,0.261151);
rgb(829pt)=(0.980818,0.734359,0.259518);
rgb(830pt)=(0.982299,0.735099,0.257884);
rgb(831pt)=(0.983697,0.73591,0.256227);
rgb(832pt)=(0.984999,0.736803,0.254542);
rgb(833pt)=(0.986301,0.737697,0.252858);
rgb(834pt)=(0.987603,0.73859,0.251173);
rgb(835pt)=(0.988753,0.739566,0.249474);
rgb(836pt)=(0.989774,0.740613,0.247764);
rgb(837pt)=(0.990795,0.741659,0.246054);
rgb(838pt)=(0.991816,0.742706,0.244344);
rgb(839pt)=(0.992677,0.743816,0.242681);
rgb(840pt)=(0.993443,0.744965,0.241048);
rgb(841pt)=(0.994209,0.746114,0.239414);
rgb(842pt)=(0.994975,0.747262,0.23778);
rgb(843pt)=(0.995578,0.748465,0.236165);
rgb(844pt)=(0.996114,0.74969,0.234557);
rgb(845pt)=(0.99665,0.750915,0.232949);
rgb(846pt)=(0.997186,0.752141,0.231341);
rgb(847pt)=(0.997562,0.753386,0.229813);
rgb(848pt)=(0.997893,0.754637,0.228307);
rgb(849pt)=(0.998225,0.755887,0.226801);
rgb(850pt)=(0.998557,0.757138,0.225295);
rgb(851pt)=(0.998711,0.758433,0.223856);
rgb(852pt)=(0.998839,0.759735,0.222426);
rgb(853pt)=(0.998966,0.761037,0.220997);
rgb(854pt)=(0.999094,0.762339,0.219567);
rgb(855pt)=(0.999076,0.763641,0.218186);
rgb(856pt)=(0.99905,0.764942,0.216808);
rgb(857pt)=(0.999025,0.766244,0.21543);
rgb(858pt)=(0.998995,0.767546,0.214054);
rgb(859pt)=(0.998868,0.768848,0.212752);
rgb(860pt)=(0.99874,0.77015,0.21145);
rgb(861pt)=(0.998613,0.771451,0.210149);
rgb(862pt)=(0.998473,0.772756,0.208856);
rgb(863pt)=(0.998243,0.774083,0.207631);
rgb(864pt)=(0.998014,0.775411,0.206405);
rgb(865pt)=(0.997784,0.776738,0.20518);
rgb(866pt)=(0.997539,0.77806,0.20396);
rgb(867pt)=(0.997232,0.779362,0.20276);
rgb(868pt)=(0.996926,0.780664,0.201561);
rgb(869pt)=(0.99662,0.781966,0.200361);
rgb(870pt)=(0.996299,0.783268,0.199168);
rgb(871pt)=(0.995942,0.784569,0.197994);
rgb(872pt)=(0.995584,0.785871,0.19682);
rgb(873pt)=(0.995227,0.787173,0.195646);
rgb(874pt)=(0.994842,0.788475,0.19449);
rgb(875pt)=(0.994408,0.789777,0.193367);
rgb(876pt)=(0.993974,0.791078,0.192244);
rgb(877pt)=(0.99354,0.79238,0.191121);
rgb(878pt)=(0.993083,0.793671,0.190021);
rgb(879pt)=(0.992598,0.794947,0.188949);
rgb(880pt)=(0.992113,0.796223,0.187877);
rgb(881pt)=(0.991628,0.797499,0.186805);
rgb(882pt)=(0.99113,0.798789,0.185732);
rgb(883pt)=(0.990619,0.800091,0.18466);
rgb(884pt)=(0.990109,0.801393,0.183588);
rgb(885pt)=(0.989598,0.802695,0.182516);
rgb(886pt)=(0.989072,0.803996,0.18146);
rgb(887pt)=(0.988536,0.805298,0.180413);
rgb(888pt)=(0.988,0.8066,0.179367);
rgb(889pt)=(0.987464,0.807902,0.17832);
rgb(890pt)=(0.98691,0.809186,0.177291);
rgb(891pt)=(0.986349,0.810462,0.17627);
rgb(892pt)=(0.985787,0.811738,0.175249);
rgb(893pt)=(0.985226,0.813015,0.174228);
rgb(894pt)=(0.984644,0.814311,0.173207);
rgb(895pt)=(0.984057,0.815613,0.172186);
rgb(896pt)=(0.98347,0.816914,0.171165);
rgb(897pt)=(0.982883,0.818216,0.170144);
rgb(898pt)=(0.982296,0.819518,0.169145);
rgb(899pt)=(0.981709,0.82082,0.16815);
rgb(900pt)=(0.981122,0.822122,0.167154);
rgb(901pt)=(0.980535,0.823423,0.166159);
rgb(902pt)=(0.979947,0.824725,0.165163);
rgb(903pt)=(0.97936,0.826027,0.164168);
rgb(904pt)=(0.978773,0.827329,0.163172);
rgb(905pt)=(0.978186,0.828631,0.162177);
rgb(906pt)=(0.977599,0.829932,0.161181);
rgb(907pt)=(0.977012,0.831234,0.160186);
rgb(908pt)=(0.976425,0.832536,0.15919);
rgb(909pt)=(0.975838,0.833841,0.158195);
rgb(910pt)=(0.975251,0.835168,0.157199);
rgb(911pt)=(0.974664,0.836495,0.156204);
rgb(912pt)=(0.974077,0.837823,0.155208);
rgb(913pt)=(0.973489,0.83915,0.154213);
rgb(914pt)=(0.972902,0.840477,0.153217);
rgb(915pt)=(0.972315,0.841805,0.152222);
rgb(916pt)=(0.971728,0.843132,0.151226);
rgb(917pt)=(0.971155,0.844466,0.150224);
rgb(918pt)=(0.970619,0.845819,0.149203);
rgb(919pt)=(0.970083,0.847172,0.148182);
rgb(920pt)=(0.969547,0.848525,0.147161);
rgb(921pt)=(0.96902,0.849886,0.14614);
rgb(922pt)=(0.968509,0.851265,0.145119);
rgb(923pt)=(0.967999,0.852643,0.144098);
rgb(924pt)=(0.967488,0.854022,0.143077);
rgb(925pt)=(0.967,0.855411,0.142056);
rgb(926pt)=(0.966541,0.856815,0.141035);
rgb(927pt)=(0.966081,0.858219,0.140014);
rgb(928pt)=(0.965622,0.859623,0.138992);
rgb(929pt)=(0.965189,0.86104,0.137945);
rgb(930pt)=(0.96478,0.862469,0.136873);
rgb(931pt)=(0.964372,0.863899,0.135801);
rgb(932pt)=(0.963963,0.865328,0.134729);
rgb(933pt)=(0.96357,0.866773,0.133657);
rgb(934pt)=(0.963187,0.868228,0.132585);
rgb(935pt)=(0.962805,0.869683,0.131513);
rgb(936pt)=(0.962422,0.871138,0.130441);
rgb(937pt)=(0.962091,0.87261,0.129351);
rgb(938pt)=(0.961785,0.874091,0.128253);
rgb(939pt)=(0.961478,0.875571,0.127156);
rgb(940pt)=(0.961172,0.877052,0.126058);
rgb(941pt)=(0.960885,0.878571,0.124961);
rgb(942pt)=(0.960605,0.880103,0.123863);
rgb(943pt)=(0.960324,0.881634,0.122765);
rgb(944pt)=(0.960043,0.883166,0.121668);
rgb(945pt)=(0.959849,0.884719,0.120549);
rgb(946pt)=(0.95967,0.886276,0.119426);
rgb(947pt)=(0.959491,0.887833,0.118302);
rgb(948pt)=(0.959313,0.88939,0.117179);
rgb(949pt)=(0.959181,0.890995,0.116032);
rgb(950pt)=(0.959054,0.892603,0.114884);
rgb(951pt)=(0.958926,0.894211,0.113735);
rgb(952pt)=(0.958799,0.895819,0.112587);
rgb(953pt)=(0.958748,0.897453,0.111464);
rgb(954pt)=(0.958697,0.899086,0.110341);
rgb(955pt)=(0.958646,0.90072,0.109217);
rgb(956pt)=(0.958602,0.902359,0.108089);
rgb(957pt)=(0.958628,0.904043,0.106915);
rgb(958pt)=(0.958653,0.905728,0.105741);
rgb(959pt)=(0.958679,0.907413,0.104567);
rgb(960pt)=(0.958718,0.909102,0.103393);
rgb(961pt)=(0.95882,0.910812,0.102219);
rgb(962pt)=(0.958922,0.912522,0.101044);
rgb(963pt)=(0.959024,0.914232,0.0998703);
rgb(964pt)=(0.959153,0.915962,0.0986961);
rgb(965pt)=(0.959357,0.917749,0.0975219);
rgb(966pt)=(0.959561,0.919536,0.0963477);
rgb(967pt)=(0.959765,0.921323,0.0951736);
rgb(968pt)=(0.959996,0.923118,0.0939907);
rgb(969pt)=(0.960277,0.924931,0.092791);
rgb(970pt)=(0.960557,0.926743,0.0915913);
rgb(971pt)=(0.960838,0.928555,0.0903916);
rgb(972pt)=(0.961151,0.930378,0.0891919);
rgb(973pt)=(0.961509,0.932216,0.0879922);
rgb(974pt)=(0.961866,0.934054,0.0867925);
rgb(975pt)=(0.962223,0.935892,0.0855928);
rgb(976pt)=(0.96262,0.937768,0.0843802);
rgb(977pt)=(0.963053,0.939683,0.083155);
rgb(978pt)=(0.963487,0.941597,0.0819297);
rgb(979pt)=(0.963921,0.943512,0.0807045);
rgb(980pt)=(0.964415,0.945426,0.0794643);
rgb(981pt)=(0.964951,0.947341,0.0782135);
rgb(982pt)=(0.965487,0.949255,0.0769628);
rgb(983pt)=(0.966023,0.951169,0.075712);
rgb(984pt)=(0.966594,0.953118,0.0744441);
rgb(985pt)=(0.967181,0.955083,0.0731679);
rgb(986pt)=(0.967768,0.957049,0.0718916);
rgb(987pt)=(0.968355,0.959014,0.0706153);
rgb(988pt)=(0.96898,0.960999,0.0693006);
rgb(989pt)=(0.969619,0.96299,0.0679733);
rgb(990pt)=(0.970257,0.964981,0.0666459);
rgb(991pt)=(0.970895,0.966972,0.0653186);
rgb(992pt)=(0.971554,0.968984,0.0639486);
rgb(993pt)=(0.972218,0.971001,0.0625703);
rgb(994pt)=(0.972882,0.973017,0.0611919);
rgb(995pt)=(0.973545,0.975034,0.0598135);
rgb(996pt)=(0.974232,0.97705,0.058318);
rgb(997pt)=(0.974922,0.979067,0.056812);
rgb(998pt)=(0.975611,0.981083,0.055306);
rgb(999pt)=(0.9763,0.9831,0.0538)
}
}
\tikzstyle arrowstyle=[scale=1]
\tikzstyle directed=[postaction={decorate,decoration={markings,
		mark=at position .65 with {\arrow[arrowstyle]{stealth}}}}]
\tikzstyle reverse directed=[postaction={decorate,decoration={markings,
		mark=at position .65 with {\arrowreversed[arrowstyle]{stealth};}}}]
\newcommand{\secref}[1]{Section~\ref{#1}}
\newcommand{\thmref}[1]{Theorem~\ref{#1}}
\newcommand{\lemref}[1]{Lemma~\ref{#1}}
\newcommand{\defnref}[1]{Definition~\ref{#1}}
\newcommand{\figref}[1]{Figure~\ref{#1}}
\newcommand{\abs}[1]{\ensuremath{\left| #1 \right|}}
\newcommand{\norm}[2]{\ensuremath{\left\Vert #1 \right\Vert_{#2}}}
\newcommand{\R}{\mathbb{R}}
\newcommand{\Rpos}{\R_{\geq 0}}
\newcommand{\scattering}{\ensuremath{\sigma_s}}
\newcommand{\absorption}{\ensuremath{\sigma_a}}
\newcommand{\source}{\ensuremath{Q}}
\newcommand{\ceil}[1]{\ensuremath{\left\lceil #1 \right\rceil}}
\newcommand{\sphere}[1][2]{\ensuremath{\mathcal{S}^{#1}}}
\newcommand{\interior}[1]{\ensuremath{\operatorname{int}\left(#1\right)}}
\newcommand{\closure}[1]{\ensuremath{\overline{#1}}}
\newcommand{\convexhull}[1]{\ensuremath{\operatorname{conv}\left(#1\right)}}
\newcommand{\volume}[1]{\ensuremath{\operatorname{vol}\left(#1\right)}}
\newcommand{\distribution}[1][ ]{\ensuremath{\psi_{#1}}}
\newcommand{\distributiontzero}{\ensuremath{\distribution[\timevar=0]}}
\newcommand{\distributionboundary}{\ensuremath{\distribution[b]}}
\newcommand{\ansatz}[1][ ]{\ensuremath{\hat{\psi}_{#1}}}
\newcommand{\momentorder}{\ensuremath{N}}
\newcommand{\momentnumber}{\ensuremath{n}}
\newcommand{\refinementnumber}{\ensuremath{r}}
\newcommand{\basis}[1][ ]{{\ensuremath{\bb_{#1}}}} 
\newcommand{\basisind}{\ensuremath{i}} 
\newcommand{\basisindvar}{\ensuremath{j}} 
\newcommand{\basiscomp}[1][\basisind]{\ensuremath{b_{#1}}} 
\newcommand{\SHl}{{\ensuremath{l}}} 
\newcommand{\mmbasis}[1][\momentorder]{\ensuremath{\bm_{#1}}} 
\newcommand{\fmbasis}[1][\momentorder]{\ensuremath{\bff_{#1}}} 
\newcommand{\pmbasis}[1][\momentnumber]{\ensuremath{\bp_{#1}}} 
\newcommand{\pmbasisIk}[1][\hankelhalfind]{\ensuremath{\bp_{\partition[#1]}}}
\newcommand{\pmbasisTh}{\ensuremath{\bp_{\TriangulationSphere}}}
\newcommand{\hfbasis}[1][\momentnumber]{\ensuremath{\bh_{#1}}} 
\newcommand{\hfbasisIk}[1][\hankelhalfind]{\ensuremath{\bh_{\partition[#1]}}}
\newcommand{\hfbasisTh}{\ensuremath{\bh_{\TriangulationSphere}}}
\newcommand{\hfbasiscomp}[1][\basisind]{\ensuremath{h_{#1}}} 
\newcommand{\moments}[1][ ]{\ensuremath{\bu_{#1}}} 
\newcommand{\momentcomp}[1]{\ensuremath{u_{#1}}} 
\newcommand{\momentsposblock}[1]{\ensuremath{\bu_{#1}^{+}}} 
\newcommand{\momentsposblockcomp}[2]{\ensuremath{\left(\momentsposblock{#1}\right)_{#2}}} 
\newcommand{\density}{\ensuremath{\rho}}
\newcommand{\isotropicmoment}{\moments[\text{iso}]}
\newcommand{\massmatrix}{\ensuremath{\bM}}
\DeclareMathOperator{\spanop}{span}
\DeclareMathOperator{\ord}{ord}
\newcommand{\normalizedmoments}[1][ ]{\ensuremath{\bsphi_{#1}}} 
\newcommand{\multipliers}[1][ ]{\ensuremath{\bsalpha_{#1}}} 
\newcommand{\multipliersone}[1][ ]{\ensuremath{\bsalpha_{\text{one}}}}
\newcommand{\multiplierscomp}[1]{\ensuremath{\alpha_{#1}}} 
\newcommand{\SC}{\ensuremath{\Omega}} 
\newcommand{\SCheight}{\ensuremath{\mu}} 
\newcommand{\SCangle}{\ensuremath{\varphi}} 
\newcommand{\Domain}{\ensuremath{X}} 
\newcommand{\outernormal}{\ensuremath{\bn}} 
\newcommand{\spatialVariable}{\ensuremath{\bx}} 
\newcommand{\timeint}{\ensuremath{T}} 
\newcommand{\tf}{\ensuremath{t_f}} 
\newcommand{\timevar}{\ensuremath{t}} 
\newcommand{\ints}[1]{\ensuremath{\left<#1\right>}}
\newcommand{\intA}[2]{\ensuremath{\left<#1\right>_{#2}}}
\newcommand{\collisionop}{\ensuremath{\cC}}
\newcommand{\collision}[1]{\ensuremath{\collisionop\left(#1\right)}}
\newcommand{\collisionkernel}{\ensuremath{K}}
\newcommand{\dirac}{\ensuremath{\delta}}
\newcommand{\diracmd}{\ensuremath{\bsdelta}}
\newcommand{\indicator}[1]{\ensuremath{\mathbbm{1}_{#1}}}
\newcommand{\Lp}[1]{\ensuremath{L_{#1}}}
\newcommand{\RD}[2]{\ensuremath{\mathcal{R}_{#1}^{#2}}}
\newcommand{\RDdist}[1]{\ensuremath{\widetilde{\mathcal{R}}_{#1}}}
\newcommand{\RDpos}[1]{\ensuremath{\mathcal{R}^{+}_{#1}}}
\newcommand{\RQ}[1]{\RD{#1}{\mathcal{Q}}}
\newcommand{\AnsatzSpace}[1][\basis]{\ensuremath{\cA_{#1}}}
\newcommand{\PN}[1][\momentorder]{\ensuremath{\text{P}_{#1}}}
\newcommand{\MN}[1][\momentorder]{\ensuremath{\text{M}_{#1}}}
\newcommand{\HFMN}[1][\momentnumber]{\ensuremath{\text{HFM}_{#1}}}
\newcommand{\HFMIk}[1][\hankelhalfind]{\ensuremath{\text{HFM}_{\partition[#1]}}}
\newcommand{\HFMTh}{\ensuremath{\text{HFM}_{\TriangulationSphere}}}
\newcommand{\HFPN}[1][\momentnumber]{\ensuremath{\text{HFP}_{#1}}}
\newcommand{\HFPIk}[1][\hankelhalfind]{\ensuremath{\text{HFP}_{\partition[#1]}}}
\newcommand{\HFPTh}{\ensuremath{\text{HFP}_{\TriangulationSphere}}}
\newcommand{\PMMN}[1][\momentnumber]{\ensuremath{\text{PMM}_{#1}}}
\newcommand{\PMMIk}[1][\hankelhalfind]{\ensuremath{\text{PMM}_{\partition[#1]}}}
\newcommand{\PMMTh}{\ensuremath{\text{PMM}_{\TriangulationSphere}}}
\newcommand{\PMPN}[1][\momentnumber]{\ensuremath{\text{PMP}_{#1}}}
\newcommand{\PMPIk}[1][\hankelhalfind]{\ensuremath{\text{PMP}_{\partition[#1]}}}
\newcommand{\PMPTh}{\ensuremath{\text{PMP}_{\TriangulationSphere}}}
\newcommand{\Flux}{\ensuremath{\bF}}
\newcommand{\Source}{\ensuremath{\bs}}
\DeclareMathOperator*{\argmin}{argmin}
\newcommand{\ld}[1]{\ensuremath{{#1}_*}} 
\newcommand{\entropy}{\ensuremath{\eta}} 
\newcommand{\entropyFunctional}{\ensuremath{\mathcal{H}}} 
\def\quand{\quad \mbox{and} \quad}
\newcommand{\x}{\ensuremath{x}}
\newcommand{\y}{\ensuremath{y}}
\newcommand{\z}{\ensuremath{z}}
\newcommand{\dx}{\partial_{\x}}
\newcommand{\dy}{\partial_{\y}}
\newcommand{\dz}{\partial_{\z}}
\newcommand{\dt}{\partial_\timevar}
\newcommand{\rank}[2]{\mathrm{rank}_{#1}\hspace{-2pt}\left(#2\right)}
\newcommand{\posblockstartset}{\ensuremath{S}}
\newcommand{\posblockendset}{\ensuremath{E}}
\newcommand{\posblockorder}{\ensuremath{\ord}}
\newcommand{\hankelA}{\mathbf{A}}
\newcommand{\hankelB}{\mathbf{B}}
\newcommand{\hankelC}{\mathbf{C}}
\newcommand{\hankelrank}{r}
\newcommand{\hankelhalfind}{k}
\newcommand{\partition}[1][\hankelhalfind]{\ensuremath{\mathcal{I}_{#1}}}
\newcommand{\generalpartition}{\ensuremath{\mathcal{P}}}
\newcommand{\generalpart}[1][\hankelhalfind]{\ensuremath{V_{#1}}}
\newcommand{\cellind}{\ensuremath{j}}
\newcommand{\cell}[1]{\ensuremath{I_{#1}}}
\newcommand{\ccell}[1]{\cell{{#1}}}
\newcommand{\SpaceOfPolynomials}[1]{\ensuremath{P^{#1}}}
\newcommand{\angularDomain}{\ensuremath{V}}
\newcommand{\angularQuadrature}{\ensuremath{\mathcal{Q}}}
\newcommand{\angularQuadratureIndex}{\ensuremath{i}}
\newcommand{\angularQuadratureNumber}[1][\angularQuadrature]
{\ensuremath{{n_{#1}}}}
\newcommand{\angularQuadratureNodes}[1]{\ensuremath{\hat{\SCheight}_{#1}}}
\newcommand{\angularQuadratureNodesS}[1]{\ensuremath{\SC_{#1}}}
\newcommand{\angularQuadratureWeights}[1]{\ensuremath{w_{#1}}}
\newcommand{\intQ}[1]{\ensuremath{\intA{#1}{\angularQuadrature}}}
\newcommand{\Dx}{\nabla_\spatialVariable}
\newcommand{\momentstens}[2][ ]{\ensuremath{\bu_{#1}^{\abs{#2}}}} 
\newcommand{\normalizedmomentstens}[2][
]{\ensuremath{\normalizedmoments[#1]^{\abs{#2}}}} 
\newcommand{\SCx}{\ensuremath{\SC_\x}}
\newcommand{\SCy}{\ensuremath{\SC_\y}}
\newcommand{\SCz}{\ensuremath{\SC_\z}}
\DeclareFontFamily{U}{tipa}{}
\DeclareFontShape{U}{tipa}{m}{n}{<->tipa10}{}
\newcommand{\arc@char}{{\usefont{U}{tipa}{m}{n}\symbol{62}}}%
\newcommand{\arc}[1]{\mathpalette\arc@arc{#1}}
\newcommand{\arc@arc}[2]{%
  \sbox0{$\m@th#1#2$}%
  \vbox{
    \hbox{\resizebox{\wd0}{\height}{\arc@char}}
    \nointerlineskip
    \box0
  }%
}
\newcommand{\TriangulationSphere}{\ensuremath{\mathcal{T}_h}}
\newcommand{\sphericaltriangle}{\ensuremath{\arc{K}}}
\newcommand{\flattriangle}{\ensuremath{K}}
\newcommand{\bcA}{\ensuremath{\lambda_{A}}}
\newcommand{\bcB}{\ensuremath{\lambda_{B}}}
\newcommand{\bcC}{\ensuremath{\lambda_{C}}}
\newcommand{\vertexA}[1][]{\ensuremath{A_{#1}}}
\newcommand{\vertexB}[1][]{\ensuremath{B_{#1}}}
\newcommand{\vertexC}[1][]{\ensuremath{C_{#1}}}
\newcommand{\vertex}[1][]{\ensuremath{v_{#1}}}
\newcommand{\nvertex}{\ensuremath{n_v}}
\newcommand{\sphericalprojection}[1]{\ensuremath{g\left(#1\right)}}
\definecolor{blueviolet}{rgb}{0.54, 0.17, 0.89}
\pgfplotsset{
    discard if not/.style 2 args={
        x filter/.code={
            \edef\tempa{\thisrow{#1}}
            \edef\tempb{#2}
            \ifx\tempa\tempb
            \else
                
            \fi
        }
    }
}
\DeclarePairedDelimiterX\Set[1]\{\}{%

\,#1\,
}
\begin{document}

\begin{abstract}
We provide two new classes of moment models for linear kinetic equations in slab and three-dimensional geometry.
They are based on classical finite elements and low-order discontinuous-Galerkin approximations on the unit sphere.
We investigate their realizability conditions and other basic properties.
Numerical tests show that these models are more efficient than classical full-moment models in a space-homogeneous test,
when the analytical solution is not smooth.
\end{abstract}
\begin{keyword}
moment models \sep minimum entropy \sep kinetic transport equation \sep continuous Galerkin \sep discontinuous Galerkin \sep realizability
\end{keyword}
\maketitle

\noindent


\def\tikzpath{Images/}

\section{Introduction}
Moment closures are a type of (non-linear) Galerkin approximation typically used in the context of
kinetic transport equations.
An infinite set of moment equations is defined by taking velocity-
or phase-space averages of the kinetic density with respect to some basis of this space. A
reduced description of the kinetic density is then
achieved by truncating this hierarchy of equations at some finite order.
Unfortunately, in most cases, the remaining equations are not closed and require information from
the equations which were removed.
The specification of this information, the so-called moment closure problem, distinguishes different types of moment models.
In the context of linear radiative transport, the standard spectral method
is commonly referred to as the $\PN$ closure \cite{Lewis-Miller-1984},
where $\momentorder$ is the degree of the highest-order moments in the model. It is basically a
straight-forward Galerkin approximation on the unit sphere. The $\PN$ method is powerful and simple
to implement, but does not take into account that the original function to be
approximated, the kinetic density, must be non-negative. This often leads to physically meaningless
solutions, as the $\PN$ solutions can, e.g., contain negative values for the local
particle density.

In the context of radiative transport, entropy-based moment closures, the so-called $\MN$ models
\cite{Min78,DubFeu99}, have all the properties one would desire in a moment method, namely
positivity of the underlying kinetic density, hyperbolicity of the closed system of equations, and
entropy dissipation \cite{Lev96}. Although there have been a lot of theoretical investigations about
minimum-entropy models throughout the last fifty years, see, e.g.,
\cite{Levermore1984,Mead1984,Cernohorsky1994,DubFeu99,Junk2000}, practical
implementations were limited to very low orders
(e.g. $\momentorder=1$) \cite{BruHol01,Olbrant2012,Chidyagwai2017,Chidyagwai2016} as, generally,
$\MN$ models are too expensive and more difficult to implement compared to, e.g., direct
Monte Carlo or Discrete Ordinates simulations \cite{Garrett2014,Schneider2016,Andreo91}. They require the (numerical) solution
of a non-linear optimization problem at every
point on the space-time grid, which tends to be very time-consuming. However, there has been renewed
interest in these models recently due to their inherent parallelizability \cite{Hauck2010,Alldredge2012}.

The standard $\MN$ models use a polynomial basis on the velocity space. As a consequence,
non-smooth kinetic densities (which very frequently occur in realistic problems) are poorly
captured and often a very high moment order $\momentorder$ is needed for a sufficient
approximation.
To increase the accuracy of the $\MN$ models while maintaining the lower cost for small
$\momentorder$, partial moment and mixed moment models have been developed
\cite{Frank2006,DubKla02,Schneider2014,Ritter2016,Schneider2017}. They are based on a
partition of the velocity space while keeping the moment order fixed, similar to
some h-refinement for, e.g., finite element approximations \cite{Babuska1992}.

While the partial moment models have been extensively studied for special cases (like half- or
quarter-moments in one or two dimensions), we are unaware of any general
investigation, especially in the fully three-dimensional setup.

In this paper, which is the first of two parts, we provide realizability theory for general
first-order piece-wise discontinuous partial moments in one- and three-dimensional geometry, as well
as their continuous analogue. An extensive numerical study using a second-order
realizability-preserving scheme will be done in the second part. Here, the required realizability
can cause problems for numerical methods, as standard high-order numerical solutions (in space and
time) can destroy this property if not handled very carefully
\cite{Zhang2010,Schar1996,Schneider2015a,Schneider2015b,Schneider2016a,Chidyagwai2017,Olbrant2012}.

This paper is organized as follows. First, the transport equation and its moment approximations are
given. Then, the new moment bases are presented and the available realizability theory is
derived in one and three dimensions, followed by some numerical investigations of the approximation
properties of our models. Finally, conclusions and an outlook on future work is given.

\section{Modeling}

We consider the linear transport equation
\begin{subequations}
\label{eq:FokkerPlanckEasy}
\begin{align}
\label{eq:TransportEquation}
\dt\distribution+\SC\cdot\Dx\distribution + \absorption\distribution = \scattering\collision{\distribution}+\source,
\end{align}
which describes the density of particles with speed $\SC\in\sphere$ at position
$\spatialVariable\in\Domain\subseteq\R^3$ and time $\timevar \in \timeint = [0, \tf]$ under the
events of scattering
(proportional to $\scattering\left(\timevar,\spatialVariable\right)$), absorption (proportional to $\absorption\left(\timevar,\spatialVariable\right)$) and emission (proportional
to $\source\left(\timevar,\spatialVariable,\SC\right)$). Collisions are modeled using the BGK-type collision operator
\begin{equation}
 \collision{\distribution} =  \int\limits_{\sphere} \collisionkernel(\SC, \SC^\prime)
  \distribution(\timevar, \spatialVariable, \SC^\prime)~d\SC^\prime
  - \int\limits_{\sphere} \collisionkernel(\SC^\prime, \SC) \distribution(\timevar, \spatialVariable, \SC)~d\SC^\prime.
\label{eq:collisionOperatorR}
\end{equation}
The collision kernel $\collisionkernel$ is assumed to be strictly positive, symmetric (i.e.\, $\collisionkernel(\SC,\SC')=\collisionkernel(\SC',\SC)$) and normalized to
$\int\limits_{\sphere} \collisionkernel(\SC^\prime, \SC) d\SC^\prime~\equiv~1$.  A typical example is
\emph{isotropic scattering}, where $\collisionkernel(\SC, \SC^\prime) \equiv \frac{1}{\abs{\sphere}} = \frac{1}{4\pi}$.

The equation is supplemented with initial condition and Dirichlet boundary conditions:
\begin{align}
\distribution(0,\spatialVariable,\SC) &= \distributiontzero(\spatialVariable,\SC) &\text{for } \spatialVariable\in\Domain, \SC\in\sphere\\
\distribution(\timevar,\spatialVariable,\SC) &= \distributionboundary(\timevar,\spatialVariable,\SC) &\text{for } \timevar\in\timeint, \spatialVariable\in\partial\Domain, \outernormal\cdot\SC<0
\end{align}
where $\outernormal$ is the outward unit normal vector in $\spatialVariable\in\partial\Domain$.
\end{subequations}

Parameterizing $\SC$ in spherical coordinates we obtain
\begin{align}
\label{eq:SphericalCoordinates}
\SC = \left(\sqrt{1-\SCheight^2}\cos(\SCangle),\sqrt{1-\SCheight^2}\sin(\SCangle), \SCheight\right)^T =: \left(\SCx,\SCy,\SCz\right)^T,
\end{align}
where $\SCangle\in[0,2\pi]$ is the azimuthal and $\SCheight\in[-1,1]$ the cosine of the polar angle.

\begin{definition}
The vector of functions $\basis:\sphere\to\R^{\momentnumber}$ consisting of $\momentnumber$ basis functions $\basiscomp[i]$,
$\basisind=1,\ldots\momentnumber$, of maximal
\emph{order} $\momentorder$ (in $\SC$) is called an \emph{angular basis}.
The so-called \emph{moments} $\moments=\left(\momentcomp{0},\ldots,\momentcomp{\momentnumber-1}\right)^T$ of a given density function $\distribution$ are then defined by
\begin{align}
\label{eq:moments}
\moments = \int\limits_{\sphere} {\basis}\distribution~d\SC =: \ints{\basis\distribution},
\end{align}
where the integration is performed component-wise.

Furthermore, the quantity $\density = \density(\moments) := \ints{\distribution}$ is called the \emph{local particle density}. Additionally, $\isotropicmoment = \ints{\basis}$ is called the \emph{isotropic moment}.
\end{definition}

Equations for $\moments$ can then be obtained by multiplying \eqref{eq:FokkerPlanckEasy} with $\basis$ and integration over $\sphere$, yielding
\begin{align*}
\ints{\basis\dt\distribution}+\ints{\basis\Dx\cdot\SC\distribution} + \ints{\basis\absorption\distribution} = \scattering\ints{\basis\collision{\distribution}}+\ints{\basis\source}.
\end{align*}
Collecting known terms, and interchanging integration and differentiation where possible, the moment system has the form
\begin{equation}
\label{eq:MomentSystemUnclosed}
\dt\moments+\ints{\basis\Dx\cdot\SC\distribution} + \absorption\moments =
\scattering\ints{\basis\collision{\distribution}}+\ints{\basis\source}.
\end{equation}
Depending on the choice of $\basis$ the terms $\ints{\SCx \basis\distribution}$, $\ints{\SCy \basis\distribution}$, $\ints{\SCz \basis\distribution}$, and in some cases even $\ints{\basis\collision{\distribution}}$, cannot be given explicitly in terms of $\moments$. Therefore an ansatz $\ansatz$ has to be made for $\distribution$ closing the unknown terms. This is called the \emph{moment-closure problem}.

In this paper the ansatz density $\ansatz$ is reconstructed from the moments $\moments$ by minimizing the entropy-functional
\begin{subequations}
	\label{eq:OptProblem}
 \begin{equation}
 \label{eq:entropyFunctional}
 \entropyFunctional(\distribution) = \ints{\entropy(\distribution)}
 \end{equation}
 under the moment constraints
 \begin{equation}
 \label{eq:MomentConstraints}
 \ints{\basis\distribution} = \moments.
 \end{equation}
 \end{subequations}
The kinetic entropy density $\entropy:\R\to\R$ is strictly convex and twice continuously differentiable and the minimum is simply taken over all functions
$\distribution = \distribution(\SC)$ such that $\entropyFunctional(\distribution)$ is well defined. The obtained ansatz $\ansatz = \ansatz[\moments]$, solving this constrained
optimization problem, is given by
 \begin{equation}
  \ansatz[\moments] = \argmin\limits_{\distribution:\entropy(\distribution)\in\Lp{1}}\left\{\ints{\entropy(\distribution)}
  : \ints{\basis \distribution} = \moments \right\}.
 \label{eq:primal}
 \end{equation}
This problem, which must be solved over the space-time mesh, is typically solved through its strictly convex finite-dimensional dual,
 \begin{equation}
  \multipliers(\moments) := \argmin_{\tilde{\multipliers} \in \R^{\momentnumber}} \ints{\ld{\entropy}(\basis^T
   \tilde{\multipliers})} - \moments^T \tilde{\multipliers},
 \label{eq:dual}
 \end{equation}
where $\ld{\entropy}$ is the Legendre dual of $\entropy$. The first-order necessary conditions for the multipliers $\multipliers(\moments)$ show that the solution to
\eqref{eq:primal} has the form
 \begin{equation}
  \ansatz[\moments] = \ld{\entropy}' \left(\basis^T \multipliers(\moments) \right),
 \label{eq:psiME}
 \end{equation}
where $\ld{\entropy}'$ is the derivative of $\ld{\entropy}$.

Substituting $\distribution$ in \eqref{eq:MomentSystemUnclosed} with $\ansatz[\moments]$ yields a
closed system of equations for $\moments$:
\begin{equation}
\label{eq:MomentSystemClosed}
\dt\moments+\dx\ints{\SCx \basis\ansatz[\moments]}+\dy\ints{\SCy
\basis\ansatz[\moments]}+\dz\ints{\SCz \basis\ansatz[\moments]} + \absorption\moments =
\scattering\ints{\basis\collision{\ansatz[\moments]}}+\ints{\basis\source}.
\end{equation}

This approach is called the \emph{minimum-entropy closure} \cite{Levermore1996,Min78, minerbo1979ment,Friedrichs1971}. The resulting model has many desirable properties: symmetric hyperbolicity, bounded eigenvalues of
the directional flux Jacobian and the direct existence of an entropy-entropy flux pair (compare \cite{Levermore1996,Schneider2016}).

The kinetic entropy density $\entropy$ can be chosen according to the
physics being modeled.

As in \cite{Levermore1996,Hauck2010}, we focus on \emph{Maxwell-Boltzmann entropy}%
\begin{align}
\label{eq:EntropyM}
\entropy(\distribution) = \distribution \log(\distribution) - \distribution,
\end{align}
thus $\ld{\entropy}(p) = \ld{\entropy}'(p) = \exp(p)$. This entropy is used for
non-interacting, classical particles as in an ideal gas. {Other physically relevant
entropies are \cite{Levermore1996}}
\begin{align*}
{\entropy(\distribution) = \distribution \log(\distribution) +
\left(1-\distribution\right)\log\left(1-\distribution\right)}
\end{align*}
{for particles satisfying \emph{Fermi-Dirac} (e.g. fermions) \big($\ld{\entropy}(p) =
\log(\exp(p) + 1)$, $\ld{\entropy}'(p) = \frac{\exp(p)}{1+\exp(p)}$\big) or}
\begin{align*}
{\entropy(\distribution) = \distribution \log(\distribution) -
\left(1+\distribution\right)\log\left(1+\distribution\right)}
\end{align*}
{for particles satisfying \emph{Bose-Einstein statistics} (e.g. bosons)
\big($\ld{\entropy}(p) = - \log(1-\exp(p))$,  $\ld{\entropy}'(p) = \frac{\exp(p)}{1-\exp(p)}$\big).
Although photons are no classical particles but bosons, the approximation using the
Maxwell-Boltzmann entropy is widely used due to its simplicity \cite{Min78}. However, as in our
special case the adaption to the Bose-Einstein case is rather straight-forward, we also include it
in our demonstration code. Note that the modifications needed for the Fermi-Dirac case are more
involved, as the additional requirement $\distribution\leq 1$ changes many properties of the models.
In particular, realizability (see below) has to be treated separately for this entropy.}

Note that these physically relevant entropies all result in positive ansatz densities
\eqref{eq:psiME} (assuming $\basis^T \multipliers(\moments) < 0$ in the Bose-Einstein case). However, positivity is actually not gained for
every entropy-based moment closure. Using the entropy
\begin{align} \label{eq:EntropyP}
\entropy(\distribution) = \frac12\distribution^2,
\end{align}
the linear ansatz
\begin{align}
\label{eq:PnAnsatz}
\ansatz[\moments] = \basis^T \multipliers(\moments)
\end{align}
is obtained, leading to standard continuous/discontinuous-Galerkin approaches. In this case, the optimization problem can be solved analytically yielding
\begin{equation}
\multipliers(\moments) = \massmatrix^{-1} \moments
\end{equation}
where $M_{ij} = \ints{\basiscomp[i] \basiscomp[j]}$ \cite{Schneider2016,Alldredge2014}. If the angular basis is chosen as spherical harmonics of
order $\momentorder$, \eqref{eq:MomentSystemClosed} turns into the classical $\PN$ model
\cite{Blanco1997,Brunner2005b,Seibold2014}, which thus can be considered a minimum-entropy
model that does not guarantee positivity.

For convenience, we write \eqref{eq:MomentSystemClosed} in the standard form of a
{non-linear system of hyperbolic balance laws}:
\begin{equation}
\label{eq:GeneralHyperbolicSystem2D}
\dt\moments+\dx\Flux_1\left(\moments\right)+\dy\Flux_2\left(\moments\right)+\dz\Flux_3\left(\moments\right) = \Source\left(\moments\right),
\end{equation}
where
\begin{subequations}
\label{eq:FluxDefinitions}
\begin{align}
\Flux_1\left(\moments\right) &=  \ints{\SC_x\basis\ansatz[\moments]},\quad \Flux_2\left(\moments\right) =  \ints{\SC_y\basis\ansatz[\moments]},\quad \Flux_3\left(\moments\right) =  \ints{\SC_z\basis\ansatz[\moments]}\in\R^{\momentnumber},\\
\Source\left(\moments\right) &= {\scattering}\ints{\basis \collision{\ansatz[\moments]}}+\ints{\basis\source}-\absorption\moments.
\end{align}
\end{subequations}

To complete the definition of our moment method, we have to choose an angular basis. This
will be done in the next section. Due to the notational complexity of the full three-dimensional
setting we will first derive the models in slab geometry, which is a projection of the sphere onto
the $\z$-axis \cite{Seibold2014}. The transport equation under consideration then has the form
\begin{align}
\label{eq:TransportEquation1D}
\dt\distribution+\SCheight\dz\distribution + \absorption\distribution = \scattering\collision{\distribution}+\source, \qquad \timevar\in\timeint,\z\in\Domain,\SCheight\in[-1,1].
\end{align}
The shorthand notation $\ints{\cdot} = \int\limits_{-1}^1\cdot~d\SCheight$ then denotes integration over $[-1,1]$ instead of $\sphere{}$. Finally, the moment system is given by
\begin{align}
\label{eq:MomentSystemUnclosed1D}
\dt\moments+\dz\ints{\SCheight \basis\ansatz[\moments]} + \absorption\moments = \scattering\ints{\basis\collision{\ansatz[\moments]}}+\ints{\basis\source}.
\end{align}

\section{Angular bases in slab geometry}
\begin{definition}
  Let $\basis[]$ be an angular basis. The span of $\basis$ is defined as
    \begin{align*}
       \spanop(\basis[]) = \big\{ f \in \Lp{1}(\angularDomain,\R) \mid f \equiv \sum_{i=0}^{\momentnumber-1} a_i \basiscomp[i], a_i \in \R \big\},
    \end{align*}
  where $\angularDomain = [-1,1]$ in slab geometry and $\angularDomain = \sphere$ in the full three-dimensional setting.
\end{definition}
\begin{definition}
  	Let $\basis[1]$ and $\basis[2]$ be two angular bases. We call them \textit{equivalent} if $\spanop(\basis[1]) = \spanop(\basis[2])$.
\end{definition}
\subsection{Full moments}
The standard Galerkin approach, which results in the classical $\MN$ and $\PN$ models
(using the
entropies \eqref{eq:EntropyM} and \eqref{eq:EntropyP}, respectively), chooses a polynomial basis
$\basis = \fmbasis$ on $[-1,1]$ as angular basis. There are two obvious options for $\fmbasis$,
resulting in equivalent models:
\begin{itemize}
\item the monomial basis
\begin{equation}
\label{eq:monomialbasis}
\fmbasis[\momentorder] = \left(1,\SCheight,\ldots,\SCheight^\momentorder\right)^T;
\end{equation}
\item the Legendre basis
\begin{equation}
\label{eq:legendrebasis}
\fmbasis[\momentorder] = \left(P_0^0,P_1^0,P_2^0\ldots,P_\momentorder^0\right)^T
\end{equation}
with the \emph{Legendre polynomials} $P_\SHl^0$, $\SHl=0,\ldots,\momentorder$.
\end{itemize}
The monomial basis is attractive due to its simplicity, especially for realizability considerations
\cite{Curto1991}. On the other hand, the Legendre polynomials form an orthogonal basis of
$\Lp{2}([-1,1],\R)$ and are additionally eigenfunctions of the isotropic-scattering operator
$\collisionop$ if $\collisionkernel\equiv \frac12$, diagonalizing the
source-term. We thus use Legendre polynomials for the one-dimensional $\MN$ and $\PN$
models in our
numerical experiments.

The full-moment approach is able to accurately approximate smooth density functions, often with
just a few moments. However, it struggles with discontinuous densities which
are abundant in practice. In addition, an obvious problem of full-moment models is that averaging
over the complete angular domain may remove necessary information. In the following, we will thus
investigate basis functions with local support in the angular domain. Although higher-order models
are possible, we will stick to first-order approximation, i.e.\ piecewise linear basis functions.
\subsection{Continuous piecewise-linear angular basis (Hat functions)}
\label{sec:HatFunctions1D}
As mentioned above, the method of moments is nothing else than a Galerkin approximation of the
kinetic equation \eqref{eq:TransportEquation1D} in the angular variable. The full-moment $\PN$
model is equivalent to the finite element method (FEM) with fixed angular mesh-size and increasing
order (this method is often referred to as the $p$-version, related to the maximal order
$p$ of the polynomial basis \cite{Babuska1992}). Another natural approach is the standard finite
element method, where the polynomial order is fixed but the angular mesh is refined
(often referred to as the $h$-version, where $h$ denotes the mesh-size \cite{Babuska1992}).

Let $\partition = \partition(\SCheight_0, \ldots, \SCheight_{\hankelhalfind})$ be the partition of $[-1,1]$ given by a set of $k+1$ angular ``grid'' points
$-1 = \SCheight_0 < \SCheight_1 < \ldots < \SCheight_{\hankelhalfind-1} < \SCheight_{\hankelhalfind}=1$ and
corresponding intervals $\cell{\cellind} = [\SCheight_\cellind,\SCheight_{\cellind+1}]$, $\cellind = 0, \ldots,  \hankelhalfind-1$. Given this partition, the continuous
piecewise-linear basis
functions $\hfbasis[\partition] = \left(\hfbasiscomp[0],\ldots,\hfbasiscomp[\hankelhalfind]\right)^T$ (hat functions or B-splines of first order \cite{Boor1979}) are defined as
\begin{subequations}
\label{eq:hfbasis}
\index{angularbases@\textbf{Angular bases}!Hat-function basis $\hfbasis$}
\begin{align}
\hfbasiscomp[\basisind](\SCheight) =
\begin{cases}
\indicator{\ccell{0}}\cfrac{\SCheight-\SCheight_{1}}{\SCheight_0-\SCheight_{1}} & \text{ if } \basisind=0,\\
\indicator{\ccell{\basisind-1}}\cfrac{\SCheight-\SCheight_{\basisind-1}}{\SCheight_\basisind-\SCheight_{\basisind-1}}
+\indicator{\ccell{\basisind}}\cfrac{\SCheight-\SCheight_{\basisind+1}}{\SCheight_\basisind-\SCheight_{\basisind+1}} & \text{ if } 0<\basisind<\hankelhalfind,\\
\indicator{\ccell{\hankelhalfind-1}}\cfrac{\SCheight-\SCheight_{\hankelhalfind-1}}{\SCheight_{\hankelhalfind}-\SCheight_{\hankelhalfind-1}}& \text{ if } \basisind=\hankelhalfind,\\
\end{cases}
\end{align}
where $\indicator{\ccell{\cellind}}(\SCheight)$ is the indicator function on the interval
$\ccell{\cellind}$.
\end{subequations}

\begin{definition}
\label{def:HFPNHFMN}
The linear ({using entropy} \eqref{eq:EntropyP}) and nonlinear model ({using entropy}
\eqref{eq:EntropyM}) with angular
basis $\basis = \hfbasisIk$ will be called $\HFPIk$ and $\HFMIk$, respectively.
If $\partition = \partition(-1, -1+\frac{2}{k}, \ldots, -1+(k-1)\cdot\frac{2}{k}, 1)$ is the equidistant partition with $\hankelhalfind$ intervals,
we will refer to the basis as $\hfbasis$ and to the models as $\HFPN$ and $\HFMN$, respectively, where $\momentnumber = \hankelhalfind+1$ is the number of basis functions.
\end{definition}
One of the most important properties of the hat functions is the following.
\begin{lemma}
The hat functions form a \emph{partition of unity}, i.e.
$\sum\limits_{\basisind=0}^{\momentnumber-1} \hfbasiscomp[\basisind] \equiv 1$.
\end{lemma}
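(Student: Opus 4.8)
The plan is to verify the identity one cell at a time. Fix any interval $\cell{\cellind} = [\SCheight_\cellind,\SCheight_{\cellind+1}]$ of the partition, with $0 \le \cellind \le \hankelhalfind-1$. From the definition \eqref{eq:hfbasis}, each $\hfbasiscomp[\basisind]$ is supported on $\cell{\basisind-1}\cup\cell{\basisind}$ (with the obvious one-sided truncation at $\basisind=0$ and $\basisind=\hankelhalfind$). Hence on $\cell{\cellind}$ at most two basis functions can be nonzero, namely $\hfbasiscomp[\cellind]$ and $\hfbasiscomp[\cellind+1]$, while every other $\hfbasiscomp[\basisind]$ vanishes identically there. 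This reduces the infinite-looking sum on each cell to a sum of two linear pieces.

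First I would record those two surviving pieces. Restricted to $\cell{\cellind}$, the definition gives
\begin{align*}
\hfbasiscomp[\cellind](\SCheight) = \frac{\SCheight-\SCheight_{\cellind+1}}{\SCheight_\cellind-\SCheight_{\cellind+1}}, \qquad \hfbasiscomp[\cellind+1](\SCheight) = \frac{\SCheight-\SCheight_\cellind}{\SCheight_{\cellind+1}-\SCheight_\cellind}.
\end{align*}
These are exactly the two barycentric (Lagrange) interpolants associated with the endpoints $\SCheight_\cellind$ and $\SCheight_{\cellind+1}$, so adding them telescopes:
\begin{align*}
\hfbasiscomp[\cellind](\SCheight)+\hfbasiscomp[\cellind+1](\SCheight) = \frac{(\SCheight-\SCheight_{\cellind+1})-(\SCheight-\SCheight_\cellind)}{\SCheight_\cellind-\SCheight_{\cellind+1}} = \frac{\SCheight_\cellind-\SCheight_{\cellind+1}}{\SCheight_\cellind-\SCheight_{\cellind+1}} = 1.
\end{align*}
Since all other terms vanish on $\cell{\cellind}$, the full sum $\sum_{\basisind=0}^{\momentnumber-1}\hfbasiscomp[\basisind]$ equals $1$ on $\cell{\cellind}$. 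As $\cellind$ was arbitrary and the cells $\cell{0},\ldots,\cell{\hankelhalfind-1}$ cover $[-1,1]$, the identity holds on all of $[-1,1]$.

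The only point requiring care — the mild ``main obstacle'' — is the index bookkeeping at the two ends of the partition. One must confirm that the one-sided definitions of $\hfbasiscomp[0]$ and $\hfbasiscomp[\hankelhalfind]$ produce, on $\cell{0}$ and $\cell{\hankelhalfind-1}$ respectively, exactly the same local linear pieces as the generic interior formula used above, so that the two-term telescoping argument applies uniformly to every cell rather than needing separate treatment at the boundary intervals.
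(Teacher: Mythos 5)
Your proof is correct. The paper itself states this lemma without proof (it is the standard partition-of-unity property of first-order B-splines, implicitly deferred to the cited literature), so there is no authorial argument to compare against; your cell-by-cell verification — restricting to one interval $\cell{\cellind}$, observing that only $\hfbasiscomp[\cellind]$ and $\hfbasiscomp[\cellind+1]$ survive there, and summing the two affine pieces to $1$ — is exactly the expected elementary argument, and your flagged "obstacle" at the boundary cells indeed resolves itself, since the one-sided definitions of $\hfbasiscomp[0]$ and $\hfbasiscomp[\hankelhalfind]$ coincide on $\cell{0}$ and $\cell{\hankelhalfind-1}$ with the generic local formulas you use. The only residual subtlety, which is a quirk of the paper's definition rather than of your argument, is that at an interior node $\SCheight_{\basisind}$ the two indicator functions $\indicator{\ccell{\basisind-1}}$ and $\indicator{\ccell{\basisind}}$ overlap on the closed intervals, so the formula as literally written double-counts there; the identity therefore holds off the (measure-zero) set of interior nodes, or everywhere once the intended continuous hat function is understood.
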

\begin{corollary}
\label{cor:HFSpecialCases}
\begin{enumerate}[(a)]
\item The full moment basis $\fmbasis[1]$ is equivalent to $\hfbasis[2]$.
\item The mixed moment basis \cite{Frank07,Schneider2014} $\mmbasis[1] = \left(1,\indicator{[0,1]}~\SCheight,\indicator{[-1,0]}~\SCheight\right)^T$ is equivalent to $\hfbasis[3]$.
\end{enumerate}
\end{corollary}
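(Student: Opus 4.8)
The statement in each part is an equality of two spans, so the plan is to prove $\spanop(\cdot)=\spanop(\cdot)$ by writing down an explicit, invertible linear change of coordinates between the two generating families. Since in each part the two families have the same finite cardinality, it suffices to express every element of one family as a linear combination of the other and vice versa; a dimension count then guarantees that the resulting linear map is a bijection. I read the subscript on $\hfbasis$ as the number $\momentnumber$ of basis functions, so that $\hfbasis[2]$ uses the one-interval partition with nodes $-1,1$ (here $\momentnumber=2$) and $\hfbasis[3]$ uses the two-interval partition with nodes $-1,0,1$ (here $\momentnumber=3$). The partition-of-unity lemma will supply the constant function in both cases.

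For part (a), specializing the definition \eqref{eq:hfbasis} to the nodes $-1,1$ yields the two hat functions
\begin{equation*}
\hfbasiscomp[0](\SCheight)=\frac{1-\SCheight}{2},\qquad \hfbasiscomp[1](\SCheight)=\frac{1+\SCheight}{2}.
\end{equation*}
Both are affine in $\SCheight$ and hence lie in $\spanop(\fmbasis[1])$; conversely the partition-of-unity lemma gives $1=\hfbasiscomp[0]+\hfbasiscomp[1]$, and $\SCheight=\hfbasiscomp[1]-\hfbasiscomp[0]$, so $1$ and $\SCheight$ lie in $\spanop(\hfbasis[2])$. This exhibits the change of basis in both directions, and the two spans agree.

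For part (b), specializing to the nodes $-1,0,1$ gives the three hat functions, whose two outer ones are
\begin{equation*}
\hfbasiscomp[0]=-\,\indicator{[-1,0]}\,\SCheight,\qquad \hfbasiscomp[2]=\indicator{[0,1]}\,\SCheight,
\end{equation*}
with $\hfbasiscomp[1]$ the central hat. The two non-constant mixed-moment functions are then recovered immediately as $\indicator{[0,1]}\SCheight=\hfbasiscomp[2]$ and $\indicator{[-1,0]}\SCheight=-\hfbasiscomp[0]$, while the constant is $1=\hfbasiscomp[0]+\hfbasiscomp[1]+\hfbasiscomp[2]$ by the partition-of-unity lemma; this gives $\spanop(\mmbasis[1])\subseteq\spanop(\hfbasis[3])$. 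For the reverse inclusion I invert these relations: $\hfbasiscomp[0]=-\indicator{[-1,0]}\SCheight$, $\hfbasiscomp[2]=\indicator{[0,1]}\SCheight$, and $\hfbasiscomp[1]=1-\hfbasiscomp[0]-\hfbasiscomp[2]$ all lie in $\spanop(\mmbasis[1])$.

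The computation is essentially bookkeeping with indicator supports, and the only two points needing care are reading the index convention so that each partition carries the correct number of nodes, and checking that the mixed-moment functions really belong to the continuous piecewise-linear space: at the interior node $\SCheight=0$ both $\indicator{[-1,0]}\SCheight$ and $\indicator{[0,1]}\SCheight$ vanish from either side, so they are continuous and the identifications above are legitimate. With linear independence of each family, the dimension count confirms that the exhibited maps are bijections. I expect no substantive obstacle beyond this careful matching of supports.
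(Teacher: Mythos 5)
Your proof is correct and follows essentially the same route as the paper's: an explicit invertible linear change of coordinates between the two generating families, using the partition-of-unity identity to recover the constant function. The paper merely writes the transformation in one direction and declares the claim immediate, whereas you spell out both inclusions and verify the explicit formulas for the hat functions on the nodes $\{-1,1\}$ and $\{-1,0,1\}$ — the same computation, just more fully documented.
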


\begin{proof}
The claim follows immediately, observing that
\begin{enumerate}[(a)]
\item $\fmbasis[1] = \left(1,\SCheight\right)^T = \left(\hfbasiscomp[1]+\hfbasiscomp[2],-\hfbasiscomp[1]+\hfbasiscomp[2]\right)^T$,
\item $\mmbasis[1] = \left(1,\indicator{[0,1]}~\SCheight,\indicator{[-1,0]}~\SCheight\right)^T = \left(\hfbasiscomp[1]+\hfbasiscomp[2]+\hfbasiscomp[3],\hfbasiscomp[3],-\hfbasiscomp[1]\right)^T$. \qedhere
\end{enumerate}
\end{proof}
An advantage of the piecewise linear basis over higher order polynomials is that integrals including
the minimum entropy ansatz density (using Maxwell-Boltzmann entropy) $\ansatz[\moments] =
\exp\left(\sum_{\basisind}\hfbasiscomp[\basisind]\multiplierscomp{\basisind}\right)$ can be calculated
analytically. Exemplarily, the $\HFMN$ integrals are given by
\begin{align*}
\intA{\ansatz[\moments]\hfbasiscomp}{\cell{\cellind-1}} &=
 \frac{\left(e^{\multiplierscomp{\basisind-1}} - e^{\multiplierscomp{\basisind}}\right)\, \left(\SCheight_{\basisind}
 - \SCheight_{\basisind-1}\right)}{{\left(\multiplierscomp{\basisind-1} - \multiplierscomp{\basisind}\right)}^2} - \frac{e^{\multiplierscomp{\basisind}}\,
\left(\SCheight_{\basisind} - \SCheight_{\basisind-1}\right)}{\multiplierscomp{\basisind-1} - \multiplierscomp{\basisind}}
= (\SCheight_{\basisind}-\SCheight_{\basisind-1})e^{\multiplierscomp{\basisind}} \sum \limits_{l=2}^{\infty}
\frac{(\multiplierscomp{\basisind-1}-\multiplierscomp{\basisind})^{l-2}}{l!}  \\
\intA{\ansatz[\moments]\hfbasiscomp}{\cell{\cellind}} &=
 \frac{\left(e^{\multiplierscomp{\basisind}} - e^{\multiplierscomp{\basisind+1}}\right)\, \left(\SCheight_{\basisind} -
\SCheight_{\basisind+1}\right)}{{\left(\multiplierscomp{\basisind} - \multiplierscomp{\basisind+1}\right)}^2} - \frac{e^{\multiplierscomp{\basisind}}\, \left(\SCheight_{\basisind}
- \SCheight_{\basisind+1}\right)}{\multiplierscomp{\basisind} - \multiplierscomp{\basisind+1}}
= (\SCheight_{\basisind+1}-\SCheight_{\basisind})e^{\multiplierscomp{\basisind}}\sum \limits_{l=2}^{\infty}
\frac{(\multiplierscomp{\basisind+1}-\multiplierscomp{\basisind})^{l-2}}{l!} \\
\momentcomp{\basisind} &=
\intA{\ansatz[\moments]\hfbasiscomp}{\cell{\cellind}} +
\intA{\ansatz[\moments]\hfbasiscomp}{\cell{\cellind-1}}.
\end{align*}
Note that for $\multiplierscomp{\basisind-1}\approx \multiplierscomp{\basisind}$ or
$\multiplierscomp{\basisind+1}\approx \multiplierscomp{\basisind}$ (which occurs for example in the
isotropic
state) the closed formulas become numerically instable. This can be avoided by replacing them in
this situation with a Taylor expansion of suitable order.
\subsection{Partial moments (discontinuous-Galerkin ansatz)}
\label{sec:PartialMoments1D}

The idea of partial moments is
not to average over the full domain at
once but to partition the sphere (or its projection) into disjoint parts and define moments
separately for every element of the partition \cite{SchFraPin04,Frank2006}. One model of this
class, which has been successfully applied to radiative transfer in one dimension, is the
half-moment approximation \cite{DubKla02}.

Let the partition $\partition$ be defined as in \secref{sec:HatFunctions1D}. For every interval $\cell{\cellind}$ we define moments by
\begin{equation*}
\moments[\cell{\cellind}] = \intA{\pmbasis[\cell{\cellind}]\distribution}{\cell{\cellind}} = \int_{\cell{\cellind}} \pmbasis[\cell{\cellind}]\distribution~d\SCheight.
\end{equation*}
The basis functions $\pmbasis[\cell{\cellind}]$ are monomials on their corresponding intervals and zero elsewhere. Consequently,
\begin{equation*}
  \pmbasis[\cell{\cellind}](\SCheight) = \begin{cases} \left(1,\SCheight\right)^T &\text{ if } \SCheight \in \interior{\cell{\cellind}} \\
    \mathbf{0}                          &\text{ if } \SCheight \in [-1,1]\setminus\ccell{\cellind}
  \end{cases}
\end{equation*}
Note that evaluation on the boundary of $\cell{\cellind}$ is not defined.
The set of basis functions is then given by the vector $\pmbasis[\partition] =
\left(\pmbasis[\cell{0}]^T,\ldots,\pmbasis[\cell{\hankelhalfind-1}]^T\right)^T$.
\begin{definition}
\label{def:PPnPMn}
The linear ({using entropy \eqref{eq:EntropyP}}) and nonlinear model ({using entropy \eqref{eq:EntropyM}})
with angular basis $\basis = \pmbasisIk$ will be called $\PMPIk$ and $\PMMIk$, respectively.
If $\partition$ is the equidistant partition,
we will refer to the basis as $\pmbasis$ and to the models as $\PMPN$ and $\PMMN$, respectively, where $\momentnumber = 2\hankelhalfind$ is the number of basis functions.
\end{definition}
\begin{remark}
	The $\PMMN[2]$ model is equivalent to the full-moment $\MN[1]$ model.
\end{remark}
The advantage of considering only first-order moments is that, similar to the
continuous piecewise-linear ansatz, the moment integrals in \eqref{eq:MomentConstraints} can be
calculated exactly. Assume that the ansatz satisfies
$\left.\ansatz[\moments]\right|_{\cell{\cellind}} =
\exp\left(\multiplierscomp{\cellind,0}+\multiplierscomp{\cellind,1}\SCheight\right)$. Then
\begin{align*}
\momentcomp{0,\cellind} \coloneqq \intA{\ansatz[\moments]}{\cell{\cellind}}
&= -\frac{\mathrm{e}^{\multiplierscomp{\cellind,0}}\, \left(\mathrm{e}^{\multiplierscomp{\cellind,1}\, \SCheight_{\cellind}} - \mathrm{e}^{\multiplierscomp{\cellind,1}
\SCheight_{\cellind+1}}\right)}{\multiplierscomp{\cellind,1}}
= e^{\multiplierscomp{\cellind,0}} \sum \limits_{l=1}^{\infty}
\frac{\multiplierscomp{\cellind,1}^{l-1}(\SCheight_{\cellind}^l-\SCheight_{\cellind+1}^{l})}{l!} \\
\momentcomp{1,\cellind} \coloneqq \intA{\SCheight\ansatz[\moments]}{\cell{\cellind}}
&= \frac{\mathrm{e}^{\multiplierscomp{\cellind,0} + \multiplierscomp{\cellind,1}\,
\SCheight_{\cellind+1}}\, \left(\multiplierscomp{\cellind,1}\, \SCheight_{\cellind+1} -
1\right)}{{\multiplierscomp{\cellind,1}}^2} - \frac{\mathrm{e}^{\multiplierscomp{\cellind,0}
+ \multiplierscomp{\cellind,1}\, \SCheight_{\cellind}}\, \left(\multiplierscomp{\cellind,1}\,
\SCheight_{\cellind} - 1\right)}{{\multiplierscomp{\cellind,1}}^2} \\
&= e^{\multiplierscomp{\cellind,0}} \sum \limits_{l=2}^{\infty}
\frac{\multiplierscomp{\cellind,1}^{l-2}(\SCheight_{\cellind+1}^{l}-\SCheight_{\cellind}^{l})(l-1)}{
l!}.
\end{align*}

\begin{remark} \label{rem:basisspans1d}
	We want to note that our bases satisfy the following relation:
\begin{align*}
\spanop(\fmbasis[1])\subseteq \spanop(\hfbasis[\partition]) \subseteq \spanop(\pmbasis[\partition]).
\end{align*}

\end{remark}
\section{Angular bases in three dimensions}

Like in one dimension, the full moment models are defined using a polynomial basis $\fmbasis$
over the whole velocity space $\sphere$. As a standard choice, we use real spherical harmonics of order
$\momentorder$, resulting in $\momentnumber = (\momentorder + 1)^2$ moments
for the $\PN$ and $\MN$ models.

To define (continuous and discontinuous) locally supported bases, we have to specify a
partition of the domain. Albeit both approaches are not limited to this, we only consider moments
on spherical triangles. A generalization to arbitrary convex spherical polygons is
straightforward.

Let $\TriangulationSphere$ be a spherical triangulation of $\sphere$ and
$\sphericaltriangle\in\TriangulationSphere$ be a spherical triangle with vertices
$\vertexA[\sphericaltriangle]$, $\vertexB[\sphericaltriangle]$ and $\vertexC[\sphericaltriangle]$
(or $\vertexA$, $\vertexB$, $\vertexC$ as short notation). Furthermore, let $\flattriangle$ be the
flat triangle spanned by the vertices $\vertexA$, $\vertexB$ and $\vertexC$, i.e.\,
$\sphericaltriangle = \sphericalprojection{\flattriangle}$ with $
\sphericalprojection{\spatialVariable} = \frac{\spatialVariable}{\norm{\spatialVariable}{2}}~.$ This
is shown exemplarily in \figref{fig:triangles}.

\begin{figure}
\centering
\settikzlabel{fig:sphericaltrianglenotation}
\settikzlabel{fig:gridrefinement}
\externaltikz{sphericaltrianglenotation}{\input{Images/sphericaltrianglenotation}}
\caption{\textbf{(a)}: definition of the flat (red) and spherical (shaded black to white) triangle on the upper half of the unit sphere.\\
\textbf{(b)}: a sequence of dyadic refinements (black, red, green).}
\label{fig:triangles}
\end{figure}

In the following, we will use a dyadic refinement \cite{baumgardner1985icosahedral} of the quadrants/octants. This is achieved by subdividing every spherical triangle
into four new ones, adding vertices at the midpoints of the triangle edges. This is shown in \figref{fig:gridrefinement} for one quadrant (black) and
two refinements (red and green). Starting from the octants, after $\refinementnumber$ refinements, the triangulation
$\TriangulationSphere(\refinementnumber)$ consists of $\abs{\TriangulationSphere(\refinementnumber)} = 2 \cdot 4^{\refinementnumber+1}$
spherical triangles with $\nvertex(\refinementnumber) = 4^{\refinementnumber+1} + 2$ vertices (see \figref{fig:BCFunctions}
for the case $\refinementnumber = 1$).

\begin{remark}
Previous results indicate that the dyadic refinement is better initialized using the vertices of an icosahedron instead of an octahedron \cite{baumgardner1985icosahedral}.
We decided to use this variant regardless to include the quarter-moment model into our sequence of refinements. However, this is only for investigative reasons.
Any practical application should start from the icosahedron.
\end{remark}

\subsection{Barycentric-coordinate basis functions}
We consider basis functions defined using spherical barycentric coordinates \cite{Buss2001,Langer2006,Rustamov2010}. These basis functions are associated with vertices of the triangulation $\TriangulationSphere$.
\begin{definition}
	\label{def:BCfuns}
Let the spherical triangle $\sphericaltriangle$ be given. The functions
$\hfbasiscomp[{\vertexA[\sphericaltriangle]}]$,
$\hfbasiscomp[{\vertexB[\sphericaltriangle]}]$ and
$\hfbasiscomp[{\vertexC[\sphericaltriangle]}]$
are defined by
\begin{enumerate}
	\item $\hfbasiscomp[{\vertexA[\sphericaltriangle]}]\left(\SC\right)+\hfbasiscomp[{
		\vertexB[\sphericaltriangle]}]\left(\SC\right)+\hfbasiscomp[{\vertexC[
		\sphericaltriangle]}]\left(\SC\right) = 1$ for every point $\SC\in\sphericaltriangle$ (partition of unity).
	\item At the vertices $\vertex\in\{\vertexA[\sphericaltriangle],\vertexB[\sphericaltriangle],\vertexC[\sphericaltriangle]\}$ of $\sphericaltriangle$:
	\begin{align}
	\label{eq:BC1}
	\hfbasiscomp[{\vertexA[\sphericaltriangle]}]\left(\vertex\right) = \begin{cases}
	1 & \text{ if } \vertex = \vertexA[\sphericaltriangle],\\
	0 & \text{ else},
	\end{cases}
	\end{align}
	and likewise for $\hfbasiscomp[{\vertexB[\sphericaltriangle]}]$ and $\hfbasiscomp[{\vertexC[\sphericaltriangle]}]$, respectively (Lagrange property).
	\item $\SC\in\sphericaltriangle$ is the
	\emph{Riemannian center of mass} with weights
	$\hfbasiscomp[{\vertexA[\sphericaltriangle]}]\left(\SC\right)$,
	$\hfbasiscomp[{\vertexB[\sphericaltriangle]}]\left(\SC\right)$,
	$\hfbasiscomp[{\vertexC[\sphericaltriangle]}]\left(\SC\right)$ and nodes
	$\vertexA[\sphericaltriangle]$, $\vertexB[\sphericaltriangle]$,
	$\vertexC[\sphericaltriangle]$, respectively.
\end{enumerate}

\end{definition}\label{def:sphericalbarycentric}
\begin{lemma}[\cite{Rustamov2010}]
\begin{enumerate}[(a)]
\item The functions $\hfbasiscomp[{\vertexA[\sphericaltriangle]}]$,
$\hfbasiscomp[{\vertexB[\sphericaltriangle]}]$ and
$\hfbasiscomp[{\vertexC[\sphericaltriangle]}]$ from \defnref{def:BCfuns} exist uniquely.
\item For every interior point $\SC\in\interior{\sphericaltriangle}$ and every $\vertex\in\{\vertexA[\sphericaltriangle],\vertexB[\sphericaltriangle],\vertexC[\sphericaltriangle]\}$ it holds that $\hfbasiscomp[\vertex]\left(\SC\right)>0$ and $\hfbasiscomp[\vertex]\left(\SC\right)\geq 0$ for every $\SC\in\sphere$.
\end{enumerate}
\label{lem:BCproperties}
\end{lemma}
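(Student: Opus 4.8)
The plan is to realise the three barycentric functions as the components of the inverse of a single "center-of-mass map" $\Phi$, and then to read off every stated property from the fact that $\Phi$ is a homeomorphism. First I would fix the geometric setting: the vertices $\vertexA,\vertexB,\vertexC$ lie in an open hemisphere — indeed every triangle produced by the dyadic refinement is contained in a geodesically convex ball $B\subset\sphere$ of radius strictly less than $\pi/2$ — so the squared geodesic distance $\SC\mapsto d(\SC,\vertex)^2$ is smooth and strictly geodesically convex on $B$ for each vertex $\vertex$. Hence for every weight vector $(\bcA,\bcB,\bcC)$ in the closed standard simplex $\Delta=\{(\bcA,\bcB,\bcC):\bcA,\bcB,\bcC\geq0,\ \bcA+\bcB+\bcC=1\}$ the energy
\begin{equation*}
E(\SC)=\bcA\,d(\SC,\vertexA)^2+\bcB\,d(\SC,\vertexB)^2+\bcC\,d(\SC,\vertexC)^2
\end{equation*}
is strictly convex on $B$ and therefore has a unique minimizer, the \emph{Riemannian center of mass}. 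Writing this point as $\Phi(\bcA,\bcB,\bcC)$, convexity places it in $\convexhull{\{\vertexA,\vertexB,\vertexC\}}=\sphericaltriangle$, so we obtain a map $\Phi:\Delta\to\sphericaltriangle$.

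Next I would prove that $\Phi$ is a homeomorphism, which is precisely the assertion of part (a): defining $\hfbasiscomp[\vertex](\SC):=\big(\Phi^{-1}(\SC)\big)_{\vertex}$ yields functions meeting all three conditions of \defnref{def:BCfuns}, and conversely any functions satisfying condition 3 must coincide with $\Phi^{-1}$, giving uniqueness. Smoothness and local invertibility on the interior follow from the first-order optimality condition $\sum_{\vertex}\lambda_{\vertex}\log_{\Phi(\lambda)}(\vertex)=0$ via the implicit function theorem: differentiating couples the positive-definite Hessian of $E$ with the logarithm vectors $\log_{\SC}(\vertexA),\log_{\SC}(\vertexB),\log_{\SC}(\vertexC)$, which are affinely independent in the tangent plane $T_{\SC}\sphere$ for $\SC\in\interior{\sphericaltriangle}$, so the Jacobian is nonsingular. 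On the boundary, if one weight vanishes (say $\bcC=0$) the center of mass of $\{\vertexA,\vertexB\}$ lies on the geodesic arc joining them, i.e.\ on the edge of $\sphericaltriangle$ opposite $\vertexC$, while the simplex vertices map to the triangle vertices; thus $\Phi$ restricts to a homeomorphism $\partial\Delta\to\partial\sphericaltriangle$. A smooth map between closed topological disks that is a local diffeomorphism on the interior and a homeomorphism on the boundary is a global homeomorphism (invariance of domain together with a degree argument), which closes part (a).

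Partition of unity and the Lagrange property are then automatic: the components of $\Phi^{-1}$ sum to $1$ because every element of $\Delta$ does, and $\Phi$ sends the simplex vertex with weight $1$ on $\vertexA$ to $\vertexA$, forcing $\hfbasiscomp[\vertexA](\vertexA)=1$ and $\hfbasiscomp[\vertexB](\vertexA)=\hfbasiscomp[\vertexC](\vertexA)=0$, and symmetrically. For part (b), positivity in the interior is immediate: $\Phi$ carries the open simplex (all weights positive) onto $\interior{\sphericaltriangle}$, so for $\SC\in\interior{\sphericaltriangle}$ every weight $\hfbasiscomp[\vertex](\SC)>0$; on the closed triangle the weights lie in $\Delta$ and hence are $\geq0$, and extending $\hfbasiscomp[\vertex]$ by zero outside the star of $\vertex$ — continuously, since the coordinate opposite an edge vanishes along that edge — preserves $\hfbasiscomp[\vertex]\geq0$ on all of $\sphere$.

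The main obstacle is the global bijectivity of $\Phi$. Strict convexity of $E$ supplies existence, uniqueness and smoothness of the center of mass cheaply, but upgrading local invertibility to a genuine homeomorphism requires the boundary analysis and the topological degree step above; this is exactly where the geometry of the spherical triangle — its containment in a convex ball of radius $<\pi/2$, which guarantees the affine independence of the logarithm images and the edge-to-edge behavior — is indispensable.
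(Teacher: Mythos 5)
The paper itself offers no proof of this lemma: it is imported wholesale from \cite{Rustamov2010}, whose construction rests on the spherical-average (Riemannian center of mass) results of Buss and Fillmore. Your proposal reconstructs exactly that route --- build the weighted-average map $\Phi$ on the simplex, show it is a homeomorphism onto $\sphericaltriangle$, and define $\hfbasiscomp[\vertex]$ as the components of $\Phi^{-1}$, from which the partition of unity, the Lagrange property, uniqueness and the sign conditions in (b) all follow --- so you are reproducing the cited argument rather than offering an alternative, and the overall architecture (convexity for existence and uniqueness of the center of mass, implicit function theorem for local invertibility, boundary analysis plus a degree argument for global bijectivity, continuous extension by zero) is the right one.

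Two of your steps need repair, however. First, the convexity justification is false as stated: containment of $\sphericaltriangle$ in a geodesic ball of radius $\rho<\pi/2$ does \emph{not} make $d(\cdot,\vertex)^2$ strictly convex on that ball, since points of the ball may lie at distance up to $2\rho$ from $\vertex$, and the tangential Hessian of $d(\cdot,\vertex)^2$ is proportional to $r\cot r$ with $r=d(\cdot,\vertex)$, which vanishes at $r=\pi/2$ and is negative beyond. What you actually need is that every point of the triangle is at distance less than $\pi/2$ from every vertex, and the coarsest triangles in the paper's dyadic refinement --- the octants --- sit exactly on this boundary: the edge opposite a vertex is at distance exactly $\pi/2$ from it, so the corresponding term is convex but not strictly so there. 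The conclusion still holds (the degenerate directions of the three terms never align, and the correct hypothesis, containment in an open hemisphere, is satisfied), but this borderline case must be argued rather than absorbed into a radius bound. Second, the affine independence of $\log_{\SC}(\vertexA)$, $\log_{\SC}(\vertexB)$, $\log_{\SC}(\vertexC)$ for every interior $\SC$ --- equivalently the nonsingularity of $D\Phi$, hence injectivity --- is precisely the nontrivial content of the theorem you are invoking, not a free consequence of the triangle being nondegenerate; as written you assert the key fact. With those two points either proved or explicitly delegated to \cite{Rustamov2010}, as the paper does, the proposal is correct.
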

Details on how to evaluate the spherical barycentric coordinates are given in \cite{Rustamov2010}. For the theoretical investigation of the resulting moment models we
only need the properties stated above.

Numbering all vertices of $\TriangulationSphere$ as $\vertex[0],\ldots,\vertex[\nvertex-1]$, the full set of basis functions is given as
\begin{equation*}
  \hfbasis[\TriangulationSphere] = \left(\hfbasiscomp[0],\ldots,\hfbasiscomp[\nvertex-1]\right).
\end{equation*}
with $h_i|_{\sphericaltriangle} = h_{\vertexA[\sphericaltriangle]}$ if $v_i$ corresponds to vertex $\vertexA$ in $\sphericaltriangle$ (and likewise for $\vertexB$, $\vertexC$)
  and $h_i|_{\sphericaltriangle} \equiv 0$ on spherical triangles that do not contain $v_i$.
  Since the basis functions form a partition of unity, it follows that $\density = \sum\limits_{\basisind=0}^{\nvertex-1} \momentcomp{\basisind}$. We show one example of such a basis function in \figref{fig:BCFunctions}.

\begin{figure}
\externaltikz{HatFunctionsBC}{\input{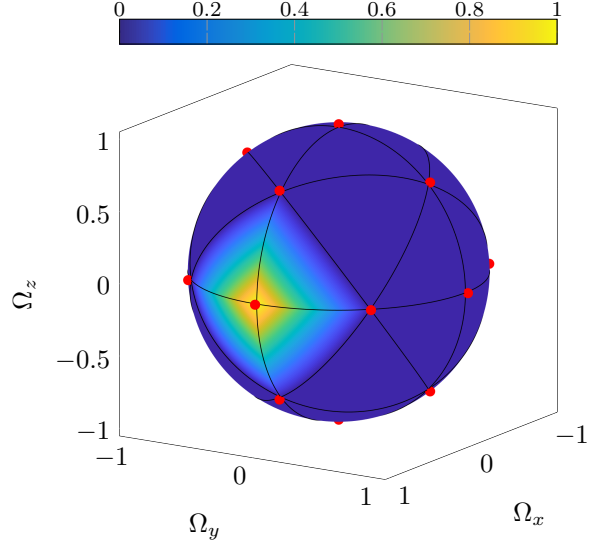}}
 \centering
 \caption{One basis function of $\hfbasis[18]$ (one level of refinement). The function value is encoded in the color scale.}
 \label{fig:BCFunctions}
\end{figure}

As above (see \defnref{def:HFPNHFMN}), the corresponding moment models will be called $\HFPTh$ and $\HFMTh$, respectively.
If $\TriangulationSphere = \TriangulationSphere(\refinementnumber)$ is the triangulation obtained by $\refinementnumber$ dyadic refinements of
the octants, the models will be called $\HFPN[\nvertex]$ and $\HFMN[\nvertex]$, where $\nvertex = 4^{\refinementnumber+1} + 2$ is the number of vertices of the triangulation.

\subsection{Partial moments on the unit sphere}
Partial-moment models on the sphere have been introduced in reference \cite{Frank2006} (although the
authors restricted their investigation to quarter moments). We will only investigate first-order partial moments. Let $\sphericaltriangle$ be a spherical triangle. We then define
\begin{align*}
\pmbasis[\sphericaltriangle](\SC) = \begin{cases} \left(1,\SCx,\SCy,\SCz\right)^T &\text{ if } \SC \in \interior{\sphericaltriangle}, \\
    \mathbf{0}                               &\text{ if } \SC \in \sphere\setminus\sphericaltriangle,
                                    \end{cases}
\end{align*}
where $\interior{\sphericaltriangle}$ is the interior of $\sphericaltriangle$. As in one dimension, the value on the boundary of $\sphericaltriangle$ remains unspecified. We will denote the moments with respect to
$\pmbasis[\sphericaltriangle]$ as
$\moments[\sphericaltriangle] = \ints{\pmbasis[\sphericaltriangle]\distribution}$.
If $\moments[\sphericaltriangle] = {(\momentcomp{0}, \momentcomp{1}, \momentcomp{2}, \momentcomp{3})}^T$ we define $\momentstens[\sphericaltriangle]{0} = \momentcomp{0}$
          and $\momentstens[\sphericaltriangle]{1} = {(\momentcomp{1}, \momentcomp{2}, \momentcomp{3})}^T$.

The angular basis $\pmbasis[\TriangulationSphere]$ now consists of all element bases $\pmbasis[\sphericaltriangle]$, $\sphericaltriangle\in\TriangulationSphere$.
The local particle density is then given by $\density =
\sum\limits_{\sphericaltriangle\in\TriangulationSphere}\momentstens[\sphericaltriangle]{0}$.
As above (see \defnref{def:PPnPMn}), the corresponding moment models will be
called $\PMPTh$ and $\PMMTh$, respectively. For \TriangulationSphere = \TriangulationSphere(\refinementnumber), we will use the
notation $\PMPN$ and $\PMMN$, where $\momentnumber = 4 \cdot \abs{\TriangulationSphere(\refinementnumber)} = 2 \cdot 4^{\refinementnumber+2}$
 is the number of moments.

\begin{remark}
  As in one dimension (see Remark \ref{rem:basisspans1d}), our bases satisfy the following relation:
\begin{align*}
  \spanop(\fmbasis[1])\subseteq \spanop(\hfbasis[\TriangulationSphere]) \subseteq \spanop(\pmbasis[\TriangulationSphere]).
\end{align*}
\end{remark}

\section{Realizability}
\label{sec:Realizability}

Since the underlying kinetic density to be approximated is
non-negative, a moment vector only makes sense physically if it can be associated with a
non-negative density function. In this case, the moment vector is called \emph{realizable}.

For higher-order numerical methods, realizability becomes a major challenge. Generally, the system of moment
equations only evolves on the set of realizable moments. This property, though desirable since it is consistent with the original kinetic density,
can cause problems for numerical methods.
Standard high-order numerical solutions to the Euler equations, which indeed are an entropy-based moment closure, have been observed to have negative
local densities and pressures \cite{Zhang2010}. This can be overcome by using a realizability-preserving limiter
(see, e.g., \cite{Zhang2010,Schneider2015a,Chidyagwai2017,Olbrant2012,Schneider2016a}). However, this limiter requires exact
knowledge of the set of realizable moments which often poses severe problems.
In particular, for high moment orders in the full-moment context, calculating the limited approximation
is numerically intractable \cite{Schneider2015b}.

In this section, we will investigate the structure of the realizable set and derive realizability conditions for our models, showing that they are particularly simple in all cases,
which will later allow for an efficient high-order implementation in space and time for our minimum-entropy models $\HFMN$ and $\PMMN$.

\subsection{Structure of the realizable sets}
As already mentioned above, a moment vector is called realizable if it can be associated with a non-negative integrable density function.
The set of realizable moments is called the \emph{realizable set}.
\begin{definition}
\label{def:RealizableSet}
The \emph{(}$\basis$\emph{-)}\emph{realizable set} $\RD{\basis}{}$ is
\begin{equation}\label{eq:defrealizableset}
  \RD{\basis}{} = \left\{\moments \in \R^{\momentnumber} ~:~\exists \text{ integrable }\distribution(\SC) \ge 0, \, \density = \ints{\distribution} > 0,\,
 \text{ such that } \moments =\ints{\basis\distribution} \right\}.
 \end{equation}
 If $\moments\in\RD{\basis}{}$, then $\moments$ is called \emph{(}\basis\emph{-)}\emph{realizable}. Any $\distribution$ such that $\moments =\ints{\basis \distribution}$ is
 called a \emph{(}$\basis$\emph{-)}\emph{representing density}.
\end{definition}
\begin{remark}\begin{minipage}[t]{0.86\linewidth}
\begin{itemize}
\item[(a)] The realizable set is a convex cone, and
\item[(b)] representing densities are not necessarily unique.
\end{itemize}
\end{minipage}
\end{remark}
Unfortunately, the definition of the realizable set is not constructive, making it hard to check if a given moment vector is realizable or not.
In the minimum-entropy context, realizability is further complicated by the fact that the optimization problem \eqref{eq:primal} only has a
solution if the moment vector can be represented by a density of the form \eqref{eq:psiME}, i.e.\ if it lies in the ansatz set
\begin{align*}
 \AnsatzSpace := \left\{\ints{\basis \ansatz[\moments]}\stackrel{\eqref{eq:psiME}}{=} \ints{\basis \ld{\entropy}'\left(\basis^T\multipliers\right) }
  : \multipliers \in \R^{\momentnumber}  \right\}.
\end{align*}
As the ansatz density $\ld{\entropy}'\left(\basis^T\multipliers\right)$ is always strictly positive (assuming $\basis^T \multipliers(\moments) < 0$ in the Bose-Einstein case),
we immediately get the inclusions
\begin{align*}
  \AnsatzSpace \subset \RDpos{\basis}{} \coloneqq \left\{\moments  \in \R^{\momentnumber} ~:~\exists \text{ integrable }\distribution(\SC) > 0 \,
  \text{ such that } \moments =\ints{\basis\distribution} \right\} \subset \RD{\basis}{}.
\end{align*}
In general, these inclusions might be strict and the ansatz set $\AnsatzSpace$ might be considerably smaller than the realizable set $\RD{\basis}{}$. In that case, the minimum entropy
ansatz would not be a reasonable approximation to the original kinetic density. Luckily, in the case of a bounded angular domain, it was shown in \cite{Junk2000}
(for Maxwell-Boltzmann entropy) that for some bases, including the full moment basis $\fmbasis$, the minimum entropy ansatz does not further restrict the set of realizable moments.
\begin{theorem}[\cite{Junk2000}]
  For the full moment basis we have
\begin{align*}
  \AnsatzSpace[\fmbasis] = \RDpos{\fmbasis}{} = \RD{\fmbasis}{}.
\end{align*}
\end{theorem}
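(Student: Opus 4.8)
The plan is to close a cycle of inclusions. Since the chain $\AnsatzSpace[\fmbasis]\subseteq\RDpos{\fmbasis}{}\subseteq\RD{\fmbasis}{}$ is already in hand, it suffices to establish the single reverse inclusion $\RD{\fmbasis}{}\subseteq\AnsatzSpace[\fmbasis]$, which then forces all three sets to coincide. The tool for this is the dual problem \eqref{eq:dual}, which for the Maxwell--Boltzmann entropy ($\ld{\entropy}(p)=\ld{\entropy}'(p)=\exp(p)$) is the minimization of $\Phi(\multipliers):=\ints{\exp(\fmbasis^T\multipliers)}-\moments^T\multipliers$, where $p_{\multipliers}(\SCheight):=\fmbasis(\SCheight)^T\multipliers$ is a polynomial of degree at most $\momentorder$. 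Here I would first record that, because the angular domain $[-1,1]$ is bounded, $\exp(p_{\multipliers})$ is integrable for \emph{every} $\multipliers\in\R^{\momentnumber}$, so $\Phi$ is finite, smooth and (strictly, by linear independence of $\fmbasis$) convex on all of $\R^{\momentnumber}$. Consequently, if $\Phi$ attains its infimum at some $\multipliers^\ast$, the stationarity condition $\nabla\Phi(\multipliers^\ast)=\mathbf 0$ reads $\ints{\fmbasis\exp(\fmbasis^T\multipliers^\ast)}=\moments$, which exhibits $\moments$ as the moment of the strictly positive ansatz density $\exp(\fmbasis^T\multipliers^\ast)$, i.e.\ $\moments\in\AnsatzSpace[\fmbasis]$. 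Everything thus reduces to proving that $\Phi$ has a minimizer whenever $\moments\in\RD{\fmbasis}{}$.

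The bridge is a strict dual-cone property of realizable moments: for every $\mathbf a\neq\mathbf 0$ whose polynomial $\fmbasis^T\mathbf a$ is non-negative on $[-1,1]$, one has $\moments^T\mathbf a>0$. This is the easy step. Writing $\moments=\ints{\fmbasis\distribution}$ with $\distribution\in\Lp{1}$, $\distribution\ge 0$ and $\ints{\distribution}>0$, I get $\moments^T\mathbf a=\ints{(\fmbasis^T\mathbf a)\,\distribution}$; the polynomial $\fmbasis^T\mathbf a$ is non-negative and, by linear independence of $\fmbasis$, not identically zero, hence strictly positive off a finite set, so its integral against a non-negative density of positive mass is strictly positive. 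This is precisely where the definition of $\RD{\fmbasis}{}$ via \emph{integrable} densities (rather than arbitrary non-negative measures) is essential: an atomic measure sitting at a zero of $\fmbasis^T\mathbf a$ would only give $\moments^T\mathbf a\ge 0$, and such boundary points of the measure moment cone are exactly the ones excluded from $\RD{\fmbasis}{}$.

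The hard part is to turn this strict positivity into \emph{coercivity} of $\Phi$, i.e.\ $\Phi(\multipliers)\to+\infty$ as $\norm{\multipliers}{}\to\infty$; for a finite convex function this is equivalent to attainment of the infimum. I would analyze recession directions: fix a unit vector $\mathbf a$ and examine $\Phi(t\mathbf a)/t$ as $t\to+\infty$. If $p_{\mathbf a}=\fmbasis^T\mathbf a$ is positive somewhere, continuity gives $p_{\mathbf a}>\delta>0$ on a subinterval, so $\ints{\exp(tp_{\mathbf a})}$ grows like $e^{t\delta}$ and $\Phi(t\mathbf a)/t\to+\infty$. If instead $p_{\mathbf a}\le 0$ on all of $[-1,1]$, then $\ints{\exp(tp_{\mathbf a})}\le\ints{1}$ stays bounded, so $\Phi(t\mathbf a)/t\to-\moments^T\mathbf a$, and applying the strict dual-cone property to $-\mathbf a$ (whose polynomial $-p_{\mathbf a}$ is non-negative) yields $-\moments^T\mathbf a>0$. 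Hence the recession function of $\Phi$ is strictly positive in every nonzero direction; by its lower semicontinuity and compactness of the unit sphere its infimum there is a positive constant, giving coercivity and thus the desired minimizer $\multipliers^\ast$. The main obstacle I anticipate is exactly this last passage --- promoting the ray-wise growth to uniform coercivity over all directions --- where the structural inputs are the boundedness of $[-1,1]$ (which keeps $\Phi$ finite everywhere, so the only way to fail is non-attainment at infinity) and the \emph{strictness} in the dual-cone inequality, which disposes of the otherwise dangerous directions on which the exponential term provides no growth.
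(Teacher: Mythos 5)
Your proof is correct. Note that the paper itself gives no proof of this theorem --- it is quoted from \cite{Junk2000} --- and your argument is essentially the one from that reference: duality, finiteness of the dual functional on all of $\R^{\momentnumber}$ thanks to the bounded angular domain, and coercivity along rays via the strict inequality $\moments^T\mathbf{a}>0$ for non-negative, not identically vanishing $\fmbasis^T\mathbf{a}$. That last step is exactly where the null-set assumption \eqref{eq:nullsetassumption} enters (a nontrivial polynomial vanishes only on a null set), which is the very hypothesis the paper later has to work around when replacing $\fmbasis$ by the piecewise-linear bases in \thmref{thm:RealizableSetInclusions}; so your reconstruction is faithful to the cited argument rather than an alternative route.
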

The piecewise linear bases do not satisfy all of the properties required from the basis in \cite{Junk2000}. More precisely, the assumption that
\begin{align}\label{eq:nullsetassumption}
  \{\Omega ~:~ \boldsymbol{\lambda} \cdot \basis(\Omega) = 0\} \text{ is a null set for any } \boldsymbol{\lambda} \in \R^n\setminus\{\mathbf{0}\}.
\end{align}
is not valid. However, in the following we will show that this is not a problem as the ansatz set is still at least the interior of the realizable set.
\begin{theorem}\label{thm:RealizableSetInclusions}
  Let $\basis = \hfbasis$ or $\basis = \pmbasis$. Then we have
\begin{align*}
  \AnsatzSpace = \RDpos{\basis}{} = \interior{\RD{\basis}{}}
\end{align*}
\end{theorem}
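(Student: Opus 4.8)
The plan is to establish the cyclic chain of inclusions
\[
\AnsatzSpace \subseteq \RDpos{\basis}{} \subseteq \interior{\RD{\basis}{}} \subseteq \AnsatzSpace,
\]
from which the asserted equalities follow at once. The leftmost inclusion is already recorded above, since every ansatz density $\ld{\entropy}'(\basis^T\multipliers)=\exp(\basis^T\multipliers)$ is strictly positive. The engine of the whole argument is the elementary observation that, for both $\basis=\hfbasis$ and $\basis=\pmbasis$, the scalar components $\basiscomp[0],\dots,\basiscomp[\momentnumber-1]$ are linearly independent in $\Lp{1}(\angularDomain)$: no $\boldsymbol\lambda\in\R^{\momentnumber}\setminus\{\mathbf 0\}$ satisfies $\boldsymbol\lambda^T\basis=0$ almost everywhere. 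For the hat functions this is the standard independence of a finite-element nodal basis; for the partial-moment basis it holds because on each cell (interval or spherical triangle) the restriction is a nonzero affine function unless all its coefficients vanish, and the cells have essentially disjoint supports. In particular $\RD{\basis}{}$ is a full-dimensional convex cone, so its interior is taken in $\R^{\momentnumber}$.

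For the inclusion $\RDpos{\basis}{}\subseteq\interior{\RD{\basis}{}}$, let $\moments=\ints{\basis\distribution}$ with $\distribution>0$ and suppose, for contradiction, that $\moments\in\partial\RD{\basis}{}$. A supporting hyperplane of the cone at $\moments$ provides $\boldsymbol\lambda\neq\mathbf 0$ with $\boldsymbol\lambda^T\moments'\ge 0$ for every realizable $\moments'$ and $\boldsymbol\lambda^T\moments=0$; testing the first condition against densities concentrated where $\boldsymbol\lambda^T\basis<0$ forces $\boldsymbol\lambda^T\basis\ge 0$ a.e. Then
\[
0=\boldsymbol\lambda^T\moments=\ints{(\boldsymbol\lambda^T\basis)\distribution}
\]
is the integral of a non-negative integrand against a strictly positive weight, so $\boldsymbol\lambda^T\basis=0$ a.e., contradicting linear independence. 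Hence $\moments$ is interior. The same computation applied to $\distribution\equiv 1$ reproves that the interior is nonempty.

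The main work is the inclusion $\interior{\RD{\basis}{}}\subseteq\AnsatzSpace$, and it is where I expect the only real difficulty. Fix $\moments\in\interior{\RD{\basis}{}}$ and consider the smooth, strictly convex dual objective
\[
\phi(\boldsymbol\lambda)=\ints{\exp(\basis^T\boldsymbol\lambda)}-\moments^T\boldsymbol\lambda,
\]
whose Hessian $\ints{\basis\basis^T\exp(\basis^T\boldsymbol\lambda)}$ is positive definite by linear independence. It suffices to show that $\phi$ attains its minimum, for then the stationarity condition $\nabla\phi(\multipliers)=\ints{\basis\exp(\basis^T\multipliers)}-\moments=\mathbf 0$ exhibits $\moments=\ints{\basis\exp(\basis^T\multipliers)}\in\AnsatzSpace$. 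Existence of a minimizer I plan to get from coercivity, checked through the recession function $\phi^\infty(\bn)=\lim_{t\to\infty}t^{-1}\phi(t\bn)$ for unit vectors $\bn$. If $\bn^T\basis>0$ on a set of positive measure, the integral grows exponentially in $t$ and $\phi^\infty(\bn)=+\infty$. If $\bn^T\basis\le 0$ a.e., the integral stays bounded, so $\phi^\infty(\bn)=-\moments^T\bn$; applying the interior characterization of the previous step to the admissible functional $-\bn$ (which satisfies $(-\bn)^T\basis\ge 0$ a.e.) gives $-\moments^T\bn>0$. In either case $\phi^\infty(\bn)>0$ for all $\bn\neq\mathbf 0$, which is exactly level-boundedness of the closed proper convex function $\phi$; strict convexity then yields a unique finite minimizer.

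This last step is precisely where the failure of the nullset assumption \eqref{eq:nullsetassumption} is absorbed: degenerate directions $\bn$ with $\bn^T\basis=0$ on a positive-measure set are not exponentially coercive, yet the strict interior inequality $-\moments^T\bn>0$ saves coercivity. It also explains why the conclusion is the \emph{interior} and not all of $\RD{\basis}{}$, since a boundary moment admits a supporting $\bn$ with $\moments^T\bn=0$, along which $\phi$ no longer grows and the entropy ansatz ceases to exist. The points demanding genuine care are the justification of differentiation under the integral sign (immediate here, as $\angularDomain$ is bounded and the $\basiscomp[\basisind]$ are bounded) and the passage from positivity of $\phi^\infty$ on the unit sphere to level-boundedness, for which I would invoke the standard recession-cone criterion for convex functions.
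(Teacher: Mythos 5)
Your proof is correct, but it follows a genuinely different route from the paper's. The paper obtains $\AnsatzSpace = \RDpos{\basis}$ and the openness of this set by adapting the arguments of Junk (in particular the diffeomorphism property \eqref{eq:momentmapdiffeo} of the moment map), and then closes the gap to $\interior{\RD{\basis}{}}$ by showing that $\RDpos{\basis}$ is \emph{dense} even in the larger set $\RDdist{\basis}$: every generalized realizable moment has an atomic representing density (this uses the explicit realizability theorems proved later in the section), and mollifying each Dirac atom by a strictly positive Dirac sequence produces approximating moments in $\RDpos{\basis}$. You instead give a self-contained convex-duality argument: linear independence of the basis components plus a supporting-hyperplane argument yields $\RDpos{\basis}\subseteq\interior{\RD{\basis}{}}$, and coercivity of the dual functional -- checked through the recession function, with the degenerate directions $\bn^T\basis\equiv 0$ on a positive-measure set handled by the strict inequality $-\moments^T\bn>0$ available at interior points -- yields $\interior{\RD{\basis}{}}\subseteq\AnsatzSpace$. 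Your approach buys independence from the cited lemmas of Junk and from the atomic-representation machinery, and it isolates exactly how the failure of the null-set assumption \eqref{eq:nullsetassumption} is compensated (a point the paper only asserts); the paper's approach buys brevity and, as a by-product, the stronger density statement $\closure{\RDpos{\basis}}\supseteq\RDdist{\basis}$ needed for the chain \eqref{eq:fullrealizablesetinclusions}, which your argument does not deliver. Two small caveats: your argument is written for the Maxwell--Boltzmann case $\ld{\entropy}'=\exp$ (consistent with the paper's context, but worth flagging), and in your second step you should note explicitly that full-dimensionality of the cone -- hence the legitimacy of taking interiors in $\R^{\momentnumber}$ -- follows from the linear independence you establish, since otherwise every point would be a boundary point.
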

In particular, the ansatz set is dense in the realizable set, so each realizable moment vector can be accurately approximated by vectors in the ansatz set. However, note
that numerical difficulties may occur when approaching the boundary of the realizable set \cite{Alldredge2014}.

Before proving \thmref{thm:RealizableSetInclusions}, we will introduce a slightly more general realizable set based on distributions \cite{Schwartz1950}
instead of integrable functions which is often easier to work with for theoretical considerations. In the following, $\angularDomain$ will always denote the angular
domain ($[-1, 1]$ in slab geometry and $\sphere$ in three dimension).
\begin{definition}
  A \emph{distribution} on $V$ is a continuous linear functional mapping from the space
  $\mathcal{C}_c^{\infty}(\angularDomain)$ of compactly supported smooth functions on $\angularDomain$ to $\R$.
\end{definition}
For a more precise definition and details on the topologies involved see, e.g., \cite{Hoermander2015, Friedlander1998}. In our case, $\angularDomain$ is compact,
so all functions on $\angularDomain$ are compactly supported. Thus, for a smooth basis $\basis$ (e.g.\ the full moment basis), evaluation of the
distribution $\distribution$ at the components of $\basis$ is well-defined. The hatfunction and partial moment bases, however, are only piecewise smooth.
We thus need the notion of a \emph{piecewise distribution}.

\begin{definition}\label{def:piecewisedistribution}
  Let $\generalpartition = \{\generalpart[0], \ldots, \generalpart[\hankelhalfind-1]\}$ be a partition of the angular domain $\angularDomain$
  with closed subsets $\generalpart[\cellind]$ such that
  $\bigcup\limits_{\cellind=0}^{\hankelhalfind-1} \generalpart[\cellind] = \angularDomain$ and
  $\volume{\generalpart[i] \cap \generalpart[j]} = 0 \ \forall i, j$.
  A \emph{piecewise distribution on $\generalpartition$} is a collection $\distribution = (\distribution[0], \ldots, \distribution[\hankelhalfind-1])$ such that
  $\distribution[\cellind]$ is a distribution on $\generalpart[\cellind]$, $\cellind = 0, \ldots, \hankelhalfind-1$. We define the restriction of $\distribution$ to
  $\generalpart[\cellind]$ as $\distribution|_{\generalpart[\cellind]} \coloneqq \distribution[\cellind]$.
\end{definition}

In the following we will use $\generalpartition = \partition$ and $\generalpartition = \TriangulationSphere$ for the piecewise linear bases in one
and three dimension, respectively, and $\generalpartition = \{\angularDomain\}$ for the full moment basis.

We will abuse notation and write, for a distribution $\distribution$ on $\angularDomain$ and a smooth basis $\basis$, the dual pairing as
\begin{align*}
  \ints{\basis \distribution} = \ints{\distribution \basis} \coloneqq {\left(\ints{\distribution \basiscomp[0]}, \ldots, \ints{\distribution \basiscomp[\momentnumber]}\right)}^T
  \coloneqq {\left(\distribution(\basiscomp[0]), \ldots, \distribution(\basiscomp[\momentnumber])\right)}^T.
\end{align*}
Note that if $\distribution$ is an integrable function rather than a distribution, a distribution is defined by
\begin{align}\label{eq:funcisdistribution}
  b \mapsto \ints{\distribution b} = \int_{\angularDomain} \distribution b \ \ \text{ for } \ \ b \in \mathcal{C}_c^{\infty}(V),
\end{align}
which shows that the notation $\ints{\psi \basis}$ for distributions is just a generalization of the notation for integrable functions.
If $\distribution$ is a piecewise distribution and $\basiscomp[]$ is piecewise (with respect to $\generalpartition$)
a smooth function on the angular domain, we further generalize the notation and define $\ints{\psi\basiscomp[]} \coloneqq \sum_{\cellind=0}^{\hankelhalfind-1}
\distribution|_{\generalpart[\cellind]}(\basiscomp[]|_{\generalpart[\cellind]})$.

\begin{remark}\label{rem:pmbasisextension}
    Remember that we did not define the values of the partial moment bases on the boundary between two elements of the partition.
    If $\distribution$ is an integrable function, these values can be chosen arbitrarily without changing the integrals as the
    boundary is a null set. When working with (piecewise) distributions, however, pointwise values have to be
    taken into account. We thus define the restriction $\pmbasis[]|_{\generalpart[\cellind]}$ of $\pmbasis[]$ to
    $\generalpart[\cellind]$ as the continuous extension of $\pmbasis[]|_{\interior{\generalpart[\cellind]}}$ to $\generalpart[\cellind]$. In particular,
      $\pmbasis[]|_{\generalpart[i]}$ and $\pmbasis[]|_{\generalpart[j]}$ might differ on $\generalpart[i] \cap \generalpart[j]$.
\end{remark}
\begin{definition}
  A distribution $\distribution$ is called \emph{non-negative}, written $\distribution \geq 0$, if
  \begin{align*}
    \ints{\distribution b} \geq 0 \text{ for all non-negative } b \in \mathcal{C}_c^{\infty}(\angularDomain).
  \end{align*}
  A piecewise distribution is called non-negative if all its restrictions to the elements of the associated partition are non-negative.
\end{definition}

We are now ready to define the generalized realizable set.
\begin{definition}
  The \emph{generalized realizable set} \RDdist{\basis} is the set of moment vectors representable by non-negative (piecewise) distributions, i.e.,
\begin{align}\label{eq:realizablesetdist}
  \RDdist{\basis} \coloneqq \left\{\moments \in \R^n ~:~\exists \text{ (piecewise) distribution }\distribution \ge 0 \,
 \text{ such that } \moments =\ints{\basis\distribution} \right\}.
\end{align}
\end{definition}

Note that by \eqref{eq:funcisdistribution} it is clear that $\RD{\basis}{} \subset \RDdist{\basis}$.
\begin{example}
  Consider the partial moment basis $\pmbasis[4] = {(\indicator{[-1,0]}, \SCheight \indicator{[-1,0]}, \indicator{[0,1]}, \SCheight \indicator{[0,1]})}^T$
  and the moment vector $\moments = {(2, 0, 1, 0)}^T$. Then $\moments = \ints{\distribution \pmbasis[4]}$ with $\distribution|_{[-1,0]} = 2\dirac$ and $\distribution|_{[0,1]} = \dirac$
  and thus $\moments \in \RDdist{\pmbasis[4]}$. However, $\moments \not\in \RD{\pmbasis[4]}{}$ because a non-negative integrable function $\distribution$
with $\ints{\distribution}_{[-1,0]} = 2$ will always yield $\ints{\SCheight\distribution}_{[-1,0]} < 0$.
\end{example}

\begin{remark}
	For notational convenience, we assume that the Dirac distribution $\dirac$ has full mass also on the boundary of integration.
\end{remark}

If a (piecewise) distribution $\distribution$ is (piecewise) a linear combination of Dirac deltas \cite{Hassani2009, Kuo2006}, it is called \emph{atomic} \cite{Curto1991}.
For the monomial basis in one dimension, every moment $\moments \in \RDdist{\basis}$ realizable by a non-negative distribution can be represented by an atomic density \cite{Curto1991}.
Below, we will see that this is also true for our piecewise linear bases. We can use this to prove \thmref{thm:RealizableSetInclusions}.
\begin{proof}[Proof of \thmref{thm:RealizableSetInclusions}]
  The proofs for the identity $\AnsatzSpace = \RD{\basis}{}$ in \cite{Junk2000}, in particular the proof of \cite[Lemma 6.2]{Junk2000} using the null set assumption \eqref{eq:nullsetassumption},
  also work for the piecewise linear bases if $\RD{\basis}{}$ is replaced by $\RDpos{\basis}{}$, which gives $\AnsatzSpace = \RDpos{\basis}{}$. Further it was shown in \cite{Junk2000} that
\begin{align}\label{eq:momentmapdiffeo}
  \moments: \R^{\momentnumber} \to \AnsatzSpace,\ \multipliers \mapsto \moments(\multipliers) \coloneqq \ints{\basis \ld{\entropy}'\left(\basis^T\multipliers\right) }
  \ \text{ is a diffeomorphism}
\end{align}
(the proofs also hold for the piecewise linear bases). In particular, $\RDpos{\basis} = \AnsatzSpace = \moments(\R^{\momentnumber})$ is open, so it remains to show that
$\RDpos{\basis}$ is dense in $\RDdist{\basis}$ (then, by the inclusion $\RD{\basis}{} \subset \RDdist{\basis}$, $\RDpos{\basis}$ is also dense in $\RD{\basis}{}$).
   Let $\moments \in \RDdist{\basis}$. Then it has an atomic representing density $\distribution(\SC) = \sum_{i} a_i \dirac(\SC - \SC_i)$
   (see \cite{Curto1991}, \thmref{thm:realizabilityhf1d}, \thmref{thm:realizabilitypm1d} and the
   proofs of \thmref{thm:realizabilityhf3d} and \thmref{thm:realizabilitypm3d} for the $\fmbasis$, $\hfbasisIk$, $\pmbasisIk$, $\hfbasisTh$ and $\pmbasisTh$ basis, respectively).
   Approximating each Dirac delta by a Dirac sequence of positive integrable functions $g_{i,j}$ converging to $\dirac(\SC - \SC_i)$ (in the sense of distributions),
    we obtain the sequence of positive densities $\distribution[j] = \sum_i a_i g_{i,j}$ whose corresponding moments
  $\moments[j] = \ints{\basis \distribution[j]}$ converge to $\moments$.
  \end{proof}

  \begin{remark}
    The strictly positive Dirac sequences $g_{i,j}$ required in the proof of \thmref{thm:RealizableSetInclusions} can be obtained by choosing any of the
  compactly supported sequences given in \cite{Dang2012}, adding a constant $1/j$ on the domain of integration and renormalizing such that $\ints{g_{i,j}} = 1$ still
holds. By \cite[Theorem 1]{Dang2012} (originally from \cite{Kanwal1998}), the resulting sequence is again a Dirac sequence.
  \end{remark}

  To summarize, we have the following structure of the realizable sets:
  \begin{align}\label{eq:fullrealizablesetinclusions}
    \AnsatzSpace = \RDpos{\basis} = \interior{\RD{\basis}{}} \subset \RD{\basis}{} \subset \RDdist{\basis} \subset \closure{\RD{\basis}{}},
\end{align}
where $\closure{\RD{\basis}{}}$ is the closure of $\RD{\basis}{}$. For the bases regarded here, the last inclusion also is an equality,
as can be seen by the explicit descriptions provided in the following (see Theorems \ref{thm:FullMomentRealizability}, \ref{thm:realizabilityhf1d},
\ref{thm:realizabilitypm1d}, \ref{thm:realizabilityhf3d} and \ref{thm:realizabilitypm3d}).

Note that, in general,
$\RD{\basis}{}$ is neither open nor closed which makes it hard to give an explicit description of the realizable set.
Instead, realizability conditions for its interior $\RDpos{\basis}$ and its closure $\RDdist{\basis}$ will be given in the next subsections.

\subsection{Slab geometry}
Although completely solved, realizability conditions for the classical full-moment models
turn out to be rather complicated \cite{Curto1991,Laurent2005}.
\begin{theorem}[\cite{Curto1991}]
\label{thm:FullMomentRealizability}
Let $\moments\in\R^{\momentorder+1}$. Define the \emph{Hankel matrices}
\begin{align*}
\hankelA(\hankelhalfind):=\left(\momentcomp{i+j}\right)_{i,j=0}^\hankelhalfind, \quad
\hankelB(\hankelhalfind):=\left(\momentcomp{i+j+1}\right)_{i,j=0}^\hankelhalfind, \quad
\hankelC(\hankelhalfind):=\left(\momentcomp{i+j}\right)_{i,j=1}^\hankelhalfind.
\end{align*}
and the \emph{rank} of $\moments$ with respect to the full-moment basis $\fmbasis$ as
\begin{align*}
  \rank{\fmbasis}{\moments} =
\begin{cases}
  \min\left(\left\{ j ~:~ 1 \leq j \leq \hankelhalfind \text{ and } \hankelA(j) \text{ is singular } \right\} \cup \{\hankelhalfind+1\}\right)
  &\text{if } \momentorder = 2\hankelhalfind, \\
\rank{\fmbasis}{\left(\momentcomp{0}, \ldots, \momentcomp{2\hankelhalfind}\right)^T} &\text{if } \momentorder = 2\hankelhalfind+1.
\end{cases}
\end{align*}

Using the definition $\hankelA\geq \hankelB$ if and only if $\hankelA-\hankelB$ is positive semi-definite, the following are equivalent.
\begin{enumerate}
  \item $\moments \in \RDdist{\fmbasis}$
  \item $\begin{cases}
 \hankelA(\hankelhalfind)\geq \mathbf{0}, \hankelA(\hankelhalfind-1)\geq \hankelC(\hankelhalfind) &\text{if } \momentorder=2\hankelhalfind, \\
 \hankelA(\hankelhalfind)\geq \hankelB(\hankelhalfind),~\hankelA(\hankelhalfind)\geq -\hankelB(\hankelhalfind) &\text{if } \momentorder=2\hankelhalfind+1.
 \end{cases}$
\item There exists an atomic representing density with $\rank{\fmbasis}{\moments}$ atoms.
\end{enumerate}
\end{theorem}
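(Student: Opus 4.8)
The plan is to establish the cycle $(3)\Rightarrow(1)\Rightarrow(2)\Rightarrow(3)$. The implication $(3)\Rightarrow(1)$ is immediate: an atomic density $\distribution = \sum_{i} a_i \dirac(\SCheight-\SCheight_i)$ with $a_i \ge 0$ and nodes $\SCheight_i \in [-1,1]$ is a non-negative distribution representing $\moments$, so $\moments \in \RDdist{\fmbasis}$. Before the remaining implications I would record a reduction that frames the whole argument: since $\angularDomain = [-1,1]$ is compact, a non-negative distribution is represented by a positive Radon measure, so $\RDdist{\fmbasis}$ is exactly the set of moment sequences of positive measures on $[-1,1]$. Thus the statement is the classical \emph{truncated Hausdorff moment problem}, and the three matrix conditions are the Gram and localizing matrices of the Riesz functional $L(q) \coloneqq \ints{\distribution q}$ acting on polynomials $q$.

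For $(1)\Rightarrow(2)$ I would verify the matrix inequalities by testing the representing measure against non-negative polynomials. In the even case $\momentorder = 2\hankelhalfind$, choosing $q = p^2$ with $\deg p \le \hankelhalfind$ gives $\hankelA(\hankelhalfind) \ge \mathbf{0}$, and choosing $q = (1-\SCheight^2)p^2$ with $\deg p \le \hankelhalfind-1$ — admissible because $1-\SCheight^2 \ge 0$ on $[-1,1]$ — gives $\hankelA(\hankelhalfind-1) \ge \hankelC(\hankelhalfind)$, after using $\ints{(1-\SCheight^2)\SCheight^{i+j}\distribution} = \momentcomp{i+j} - \momentcomp{i+j+2}$. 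In the odd case $\momentorder = 2\hankelhalfind+1$ I would instead use the factors $1\pm\SCheight \ge 0$ and test against $q = (1\pm\SCheight)p^2$ with $\deg p \le \hankelhalfind$, together with $\ints{(1\pm\SCheight)\SCheight^{i+j}\distribution} = \momentcomp{i+j}\pm\momentcomp{i+j+1}$, to obtain $\hankelA(\hankelhalfind) \ge \hankelB(\hankelhalfind)$ and $\hankelA(\hankelhalfind) \ge -\hankelB(\hankelhalfind)$. The factorizations $1-\SCheight^2 = (1-\SCheight)(1+\SCheight)$ and $1\pm\SCheight$ are the non-negative generators of $[-1,1]$, and are precisely what turns bare positive semi-definiteness into the boundary Hankel conditions.

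The substance of the theorem is $(2)\Rightarrow(3)$, the construction of a positive atomic measure supported in $[-1,1]$ with exactly $r \coloneqq \rank{\fmbasis}{\moments}$ atoms; here I would follow the flat-extension machinery of Curto and Fialkow. The idea is to view $\hankelA(\hankelhalfind)$ as the Gram matrix of a positive semi-definite form on polynomials of degree $\le \hankelhalfind$, whose rank is $r$. When the form is degenerate ($r \le \hankelhalfind$) its kernel contains a monic polynomial of degree $r$ whose roots are the nodes; when it is non-degenerate ($r = \hankelhalfind+1$) one first extends the data by one moment so as to force a rank-preserving (flat) extension, after which the same mechanism applies. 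In either case the weights are then recovered from a nonsingular Vandermonde system, and positive semi-definiteness of $\hankelA$ forces them to be non-negative. The \textbf{main obstacle}, and the step separating the Hausdorff problem from the easier Hamburger problem on all of $\R$, is to show that these $r$ nodes are real, simple, and lie in $[-1,1]$: this is exactly where the localizing conditions $\hankelA(\hankelhalfind-1)\ge\hankelC(\hankelhalfind)$ (even) and $\hankelA(\hankelhalfind)\ge\pm\hankelB(\hankelhalfind)$ (odd) are used, since they encode positivity against the boundary factors and thereby confine the support to the interval rather than merely to the real line. The Curto–Fialkow flat-extension theorem then yields that a positive semi-definite Hankel matrix of rank $r$ admitting a rank-preserving extension has a representing measure with $r$ atoms, and the boundary inequalities guarantee this extension, and hence the support, can be taken inside $[-1,1]$, completing the cycle.
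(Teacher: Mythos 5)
The paper does not actually prove this theorem: it is imported verbatim from \cite{Curto1991} (Theorems~4.1 and~4.3 there), so there is no in-paper proof to compare your attempt against. Your outline is a correct reconstruction of the standard argument for the truncated Hausdorff moment problem: on the compact set $[-1,1]$ non-negative distributions are positive measures, necessity of (2) follows by testing the Riesz functional against $p^2$, $(1-\SCheight^2)p^2$ and $(1\pm\SCheight)p^2$ so that the localizing matrices $\hankelA(\hankelhalfind-1)-\hankelC(\hankelhalfind)$ and $\hankelA(\hankelhalfind)\pm\hankelB(\hankelhalfind)$ appear, and sufficiency with exactly $\rank{\fmbasis}{\moments}$ atoms is the Curto--Fialkow rank/flat-extension construction, with the boundary inequalities confining the nodes to $[-1,1]$. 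The only caveat is that your step $(2)\Rightarrow(3)$ defers entirely to the flat-extension theorem rather than proving it, so the proposal is a faithful roadmap of the cited reference rather than a self-contained argument --- which is consistent with how the paper itself treats the result.
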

``$1. \Leftrightarrow 2.$'' also holds for the realizable set $\RD{\fmbasis}{} = \RDpos{\fmbasis}$ if ``positive semi-definite'' is replaced by ``positive definite''.

Thus, to check whether a given moment vector is realizable with respect to $\fmbasis$, the definiteness of several Hankel matrices has to be tested. This is quite expensive numerically.
In addition, a severe drawback of full-, partial- and mixed-moment, minimum-entropy models of more than first order is that the resulting integrals in
the moment equations cannot be expressed in terms of elementary functions (not only in slab but also in the full geometry). This means that numerical quadrature is strictly
necessary to solve these equations. Unfortunately, this has a strong impact on the realizable set and therefore also on the solution
of \eqref{eq:MomentSystemUnclosed1D} \cite{Alldredge2012,Schneider2015a}, which further complicates testing realizability.

Fortunately, this does not hold for the first-order moment models where the integrals can always be evaluated. Moreover, even if the integrals are computed numerically,
the realizable set does not change if a suitable quadrature is chosen (except for the partial moments in three dimensions, see \secref{subsec:numericallyrealizableset}).
This allows to always use the analytical realizability conditions which will be derived in the following and which will turn out to be very easy to check
(see Corollaries \ref{cor:hf1drealizabilitypos}, \ref{cor:pm1drealizabilitypos}, \ref{cor:hf3drealizabilitypos}, \ref{cor:pm3drealizabilitypos}).

\subsubsection{Continuous piecewise-linear angular basis (Hat functions)}
In order to derive a theorem similar to \thmref{thm:FullMomentRealizability} for the hat function basis, we first have to introduce the rank of a moment vector with
respect to this basis. We will start by introducing a decomposition of non-negative vectors into vector which have a single block of positive entries.

\begin{definition}
   Given a non-negative vector $\moments \in {(\Rpos)}^{\momentnumber}$, define the two index sets (start and end indices, respectively)
    \begin{align*}
      \posblockstartset(\moments) ~&= \left\{j \in \{0, \ldots, \momentnumber-1\} ~|~ \momentcomp{j-1} = 0 \land \momentcomp{j} > 0\right\}, \\
      \posblockendset(\moments)   ~&=\left\{j \in \{0, \ldots, \momentnumber-1\} ~|~ \momentcomp{j} > 0 \land \momentcomp{j+1} = 0\right\},
    \end{align*}
    where $u_{-1} = u_{\momentnumber} \coloneqq 0$. Let $\posblockstartset(\moments) = \{s_0, \ldots, s_{L-1}\}$ and $\posblockendset(\moments) = \{e_0, \ldots, e_{L-1}\}$
    such that the indices $s_l$ and $e_l$ are sorted in ascending order. The \emph{positive blocks} of $\moments$ are then defined as
  \begin{align*}
    \momentsposblockcomp{l}{i} = \begin{cases} \momentcomp{i} & \text{if } s_l \leq i \leq e_l \\
    0 & \text{else} \end{cases}
  \end{align*}
  for $l = 0, \ldots, L-1$. We define the \emph{order} of the positive block $\momentsposblock{l}$ as $\posblockorder(\momentsposblock{l}) = e_l - s_l + 1$. The positive blocks give a unique
  decomposition
  \begin{align*}
    \moments = \sum_{l=0}^{L-1} \momentsposblock{l}
  \end{align*}
  of $\moments$ (where the empty sum is defined as $\mathbf{0}$).
\end{definition}

\begin{example}
  For $\moments = (0, 1, 1, 0, 1, 0, 1, 1)$ we have $\posblockstartset(\moments) = \{1, 4, 6\}$, $\posblockendset(\moments) = \{2, 4, 7\}$, $L=3$,
    $\momentsposblock{0} = (0, 1, 1, 0, 0, 0, 0, 0)$, $\momentsposblock{1} = (0, 0, 0, 0, 1, 0, 0, 0)$ and $\momentsposblock{2} = (0, 0, 0, 0, 0, 0, 1, 1)$
    with $\posblockorder{\momentsposblock{0}} = \posblockorder{\momentsposblock{2}} = 2$, $\posblockorder{\momentsposblock{1}} = 1$.
\end{example}

Based on this decomposition, we can now define the $\hfbasisIk$-rank of a moment vector.

\begin{definition}
  	Let $\moments = \left(\momentcomp{0},\ldots,\momentcomp{\hankelhalfind}\right)^T$ be given.
        If $\momentcomp{\basisind} \geq 0$ for all $\basisind = 0, \ldots, \hankelhalfind$ we define the \emph{rank} of $\moments$ with respect to the hat function basis by
	\begin{align*}
          \rank{\hfbasisIk}{\moments} = \sum_{l=0}^{L-1} \ceil{\frac{\posblockorder(\momentsposblock{l})}{2}}.
	\end{align*}
\end{definition}

We are now ready to state the hat function basis analogue of \thmref{thm:FullMomentRealizability}.

\begin{theorem}
\label{thm:realizabilityhf1d}
Let $\moments = \left(\momentcomp{0},\ldots,\momentcomp{\hankelhalfind}\right)^T$ be given. Then the following are equivalent:
\begin{enumerate}
	\item $\moments\in\RDdist{\hfbasisIk}$
	\item $\moments$ satisfies
	\begin{align}
	\label{eq:HFRealizabilityCondition}
	\momentcomp{\basisind}\geq 0,\quad \text{ for all }\basisind=0,\ldots,\hankelhalfind.
	\end{align}
      \item There exists an atomic $\hfbasisIk$-representing density with $\hankelrank \coloneqq \rank{\hfbasisIk}{\moments}$ atoms.
    \end{enumerate}
\end{theorem}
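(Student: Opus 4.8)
The plan is to prove the theorem by establishing the cycle $1.\Rightarrow 2.\Rightarrow 3.\Rightarrow 1.$, which gives the full equivalence. The implication $3.\Rightarrow 1.$ is immediate, since an atomic representing density is a non-negative (piecewise) distribution and hence its moment vector lies in $\RDdist{\hfbasisIk}$ by the defining equation \eqref{eq:realizablesetdist}. For $1.\Rightarrow 2.$ I would exploit that every hat function is non-negative: if $\distribution\geq 0$ is a representing piecewise distribution, then $\momentcomp{\basisind} = \ints{\hfbasiscomp[\basisind]\distribution} = \sum_{\cellind} \distribution|_{\cell{\cellind}}\big(\hfbasiscomp[\basisind]|_{\cell{\cellind}}\big)$, and on each interval $\cell{\cellind}$ the restriction $\hfbasiscomp[\basisind]|_{\cell{\cellind}}$ is a non-negative affine (hence smooth) function, so each summand is non-negative by the definition of non-negativity of $\distribution|_{\cell{\cellind}}$; thus $\momentcomp{\basisind}\geq 0$ for all $\basisind$.

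The substance is the construction in $2.\Rightarrow 3.$, for which the building block is a description of the moment vectors produced by single Dirac masses. By the Lagrange property, a mass $c\,\dirac(\cdot-\SCheight_{\basisind})$ placed at a grid point contributes $c$ to $\momentcomp{\basisind}$ and nothing to the other moments, the assignment to a partition element being irrelevant because the hat functions are continuous. A mass $c\,\dirac(\cdot-\SCheight^{\ast})$ with $\SCheight^{\ast}$ in the interior of $\cell{\basisind}=[\SCheight_{\basisind},\SCheight_{\basisind+1}]$ contributes $c\,\hfbasiscomp[\basisind](\SCheight^{\ast})$ and $c\,\hfbasiscomp[\basisind+1](\SCheight^{\ast})$ to the two moments $\momentcomp{\basisind},\momentcomp{\basisind+1}$ and nothing elsewhere; since $\hfbasiscomp[\basisind]+\hfbasiscomp[\basisind+1]\equiv 1$ on $\cell{\basisind}$ and each is strictly positive in the interior, varying $\SCheight^{\ast}$ and $c$ realizes every pair of strictly positive values at the positions $\basisind,\basisind+1$.

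Given $\moments$ with $\momentcomp{\basisind}\geq 0$, I would then use the unique decomposition $\moments=\sum_{l}\momentsposblock{l}$ into positive blocks and treat each block separately, since distinct blocks occupy index ranges that are separated by at least one vanishing entry. For a block of even order $\posblockorder(\momentsposblock{l})=2q$ I realize the $q$ consecutive pairs of positive entries by $q$ interior masses; for odd order $2q+1$ I realize the first $q$ pairs by interior masses and the single remaining entry (say at the right end $e_l$) by one grid-point mass. This uses exactly $\ceil{\posblockorder(\momentsposblock{l})/2}$ atoms per block, and because the support of each mass touches only the two adjacent moments inside its own block, the assembled atomic density reproduces $\moments$ exactly. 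Summing over all blocks yields $\sum_{l}\ceil{\posblockorder(\momentsposblock{l})/2}=\rank{\hfbasisIk}{\moments}=\hankelrank$ atoms, which is statement $3.$

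I expect the only delicate points to be bookkeeping rather than conceptual: one must verify that the pairing within a block never reaches across a zero into a neighbouring block, which is guaranteed by the adjacency structure of the hat-function supports (an interior mass in $\cell{\basisind}$ affects only $\momentcomp{\basisind}$ and $\momentcomp{\basisind+1}$), and one must handle odd-length blocks and the degenerate case $\moments=\mathbf{0}$ (empty decomposition, zero atoms) correctly. Everything else reduces to the elementary single-mass computation above.
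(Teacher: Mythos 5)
Your proof is correct and follows essentially the same route as the paper: the same $1.\Rightarrow 2.\Rightarrow 3.\Rightarrow 1.$ cycle, the same positive-block decomposition, and the same construction of one interior atom per consecutive pair of positive entries --- your ``single-mass computation'' with $c=a+b$ and $\SCheight^{\ast}$ chosen so that $\hfbasiscomp[\basisind](\SCheight^{\ast})=a/(a+b)$ is exactly the paper's explicit atom at the weighted average $\frac{a\,\SCheight_{\basisind}+b\,\SCheight_{\basisind+1}}{a+b}$. The only (cosmetic) difference is the odd-length block, which you handle cleanly with a single grid-point atom for the leftover entry, whereas the paper describes it as a degenerate version of the even case with a doubled interior node.
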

\begin{proof}
\begin{itemize}
\item[$1.\Rightarrow 2.$] Let $\moments\in\RDdist{\hfbasisIk}$. Thus there exists a representing distribution $\distribution\geq 0$.
Since by construction $\hfbasiscomp[\basisind]\geq 0$,
it holds that $\ints{\hfbasiscomp[\basisind]\distribution}\geq 0$. Therefore, \eqref{eq:HFRealizabilityCondition} is necessary.
        \item[$2.\Rightarrow 3.$] Let $\momentnumber = \hankelhalfind + 1$. We first handle the case that $\momentcomp{\basisind} > 0$ for all $\basisind$ and $\momentnumber = 2 m$
          is even. Thus, $\moments = \momentsposblock{0}$ with $\posblockorder(\momentsposblock{0}) = \momentnumber$. Define the atomic density
          \begin{align}\label{eq:hatfunctionsatomicdensity}
          \distribution(\SCheight) = \sum\limits_{\basisind=0}^{m-1} \left(\momentcomp{2\basisind+1}+\momentcomp{2\basisind}\right)
\dirac(\SCheight-\frac{\momentcomp{2\basisind+1}\SCheight_{2\basisind+1}+\momentcomp{2\basisind}\SCheight_{2\basisind}}{\momentcomp{2\basisind+1}+\momentcomp{2\basisind}}).
	\end{align}
	It is easy to check that
      $\frac{\momentcomp{2\basisind+1}\SCheight_{2\basisind+1}+\momentcomp{2\basisind}\SCheight_{2\basisind}}{\momentcomp{2\basisind+1}+\momentcomp{2\basisind}}\in
      \left(\SCheight_{2\basisind},\SCheight_{2\basisind+1}\right)$.
   Additionally, $\ints{\hfbasiscomp[\basisind]\distribution} = \momentcomp{\basisind}$, i.e.\ $\distribution$ is a realizing distribution with $m$ atoms.
The odd case $\momentnumber = 2m-1$ is a degenerated version of the even case (just by adding an interior node $\SCheight_{\basisind}$, $0<\basisind<\hankelhalfind$, twice). This results in
a non-uniqueness of the representing distribution (as the location of the additional node is free within the set of inner nodes).

Let now $\moments = \sum_l \momentsposblock{l}$. If $\posblockorder(\momentsposblock{l}) = 1$, a representing distribution with a
  single atom is given by $\momentcomp{s_l} \dirac(\SCheight -\SCheight_{s_l})$. For $\posblockorder(\momentsposblock{l}) \geq 2$, we can get a representing distribution
  of the form \eqref{eq:hatfunctionsatomicdensity} by ignoring the zero entries and treating $(\momentcomp{s_l}, \ldots, \momentcomp{e_l})$ as a vector with strictly positive
entries on the subpartition $\partition[\posblockorder(\momentsposblock{l})-1](\SCheight_{s_l}, \ldots, \SCheight_{e_l}) \subset \partition$.
A representing distribution for $\moments$ is then given by adding the representing distributions of its positive blocks.
	\item[$3.\Rightarrow 1.$] Trivial. \qedhere
\end{itemize}
\end{proof}
\begin{remark}
  The definition of $\RDdist{\basis}$ uses piecewise distributions (see \defnref{def:piecewisedistribution}), while we used distributions on the whole angular domain
  in the proof of \thmref{thm:realizabilityhf1d}. However, for atoms in the interior of an interval
  it is obvious how to get a piecewise distribution from the atomic distribution. Atoms on the interval boundaries can be arbitrarily assigned to one
  of the two distributions in the adjacent intervals due to the continuity of the hatfunction basis.
  In fact, since non-negative distributions are always of order 0 (see, e.g, \cite[Theorem 2.1.7]{Hoermander2015}) and thus can be evaluated on continuous functions,
  the definition of $\RDdist{\basis}$ by piecewise distributions is actually only required for the (discontinuous) partial moment bases.
\end{remark}
\begin{corollary}\label{cor:hf1drealizabilitypos}
  $\moments\in\RDpos{\hfbasisIk}$ if and only if $\momentcomp{\basisind} > 0$ for all $\basisind=0,\ldots,\hankelhalfind$.
\end{corollary}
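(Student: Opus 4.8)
The plan is to read off the description of $\RDpos{\hfbasisIk}$ from the global structure of the realizable sets established in \eqref{eq:fullrealizablesetinclusions} together with the explicit characterization of $\RDdist{\hfbasisIk}$ in \thmref{thm:realizabilityhf1d}, rather than by constructing a strictly positive density by hand. First I would dispose of the easy implication directly. If $\moments\in\RDpos{\hfbasisIk}$ there is an integrable $\distribution>0$ with $\momentcomp{\basisind}=\ints{\hfbasiscomp[\basisind]\distribution}$ for every $\basisind$. Since each hat function $\hfbasiscomp[\basisind]$ is non-negative and strictly positive on the interior of its support, a set of positive Lebesgue measure, the integrand $\hfbasiscomp[\basisind]\distribution$ is non-negative and strictly positive on a set of positive measure, whence $\momentcomp{\basisind}>0$. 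This is just the step $1.\Rightarrow 2.$ in the proof of \thmref{thm:realizabilityhf1d} with $\geq$ sharpened to $>$.

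For the converse I would argue topologically. By \thmref{thm:RealizableSetInclusions} we have $\RDpos{\hfbasisIk}=\interior{\RD{\hfbasisIk}{}}$, and by \thmref{thm:realizabilityhf1d} the generalized realizable set is exactly the closed non-negative orthant, $\RDdist{\hfbasisIk}=\{\moments:\momentcomp{\basisind}\geq 0,\ \basisind=0,\ldots,\hankelhalfind\}=(\Rpos)^{\momentnumber}$. Combining the inclusions $\RD{\hfbasisIk}{}\subseteq\RDdist{\hfbasisIk}\subseteq\closure{\RD{\hfbasisIk}{}}$ from \eqref{eq:fullrealizablesetinclusions} with the fact that $(\Rpos)^{\momentnumber}$ is closed forces $\closure{\RD{\hfbasisIk}{}}=(\Rpos)^{\momentnumber}$.

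The decisive step is then to pass from the interior of $\RD{\hfbasisIk}{}$ to the interior of its closure. Here I would invoke that the realizable set is a convex cone (see the remark after \defnref{def:RealizableSet}) with non-empty interior — non-empty because $\RDpos{\hfbasisIk}=\interior{\RD{\hfbasisIk}{}}$ contains, e.g., the isotropic moment $\isotropicmoment=\ints{\hfbasis}$, the moments of the strictly positive constant density — so that the standard convex-analysis identity $\interior{C}=\interior{\closure{C}}$ applies. This yields
\begin{align*}
  \RDpos{\hfbasisIk}=\interior{\RD{\hfbasisIk}{}}=\interior{\closure{\RD{\hfbasisIk}{}}}=\interior{(\Rpos)^{\momentnumber}}=\{\moments:\momentcomp{\basisind}>0\ \text{for all}\ \basisind\},
\end{align*}
which is precisely the claim and in fact recovers both implications simultaneously. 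I expect the only genuine subtlety to be this final step: one needs the convex-analysis fact that the interior of a convex set agrees with the interior of its closure (equivalently, that no strictly positive moment vector can lie on the boundary of $\RD{\hfbasisIk}{}$), together with the verification that $\RD{\hfbasisIk}{}$ has non-empty interior. Everything else is an immediate consequence of the two theorems already established.
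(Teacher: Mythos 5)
Your proposal is correct and follows essentially the same route as the paper, whose proof is the one-line observation that the claim follows from \thmref{thm:realizabilityhf1d} once one knows $\RDpos{\hfbasisIk}$ is the interior of $\RDdist{\hfbasisIk}$ via \eqref{eq:fullrealizablesetinclusions}. Your only addition is to justify that identification explicitly — passing from $\interior{\RD{\hfbasisIk}{}}$ to $\interior{\closure{\RD{\hfbasisIk}{}}}$ by the convex-analysis fact for convex sets with non-empty interior — a step the paper leaves implicit but which is exactly the right way to close the gap.
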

\begin{proof}
  Follows directly from \thmref{thm:realizabilityhf1d} using the the fact that $\RDpos{\hfbasisIk}$ is the interior of $\RDdist{\hfbasisIk}$ (see \ref{eq:fullrealizablesetinclusions}).
\end{proof}

\begin{remark}
Another $\hfbasisIk$-representing distribution for $\moments$ is
\begin{align}
\label{eq:HFRepresentingDistribution}
\distribution(\SCheight) = \sum\limits_{\basisind=0}^{\hankelhalfind} \momentcomp{\basisind}\dirac(\SCheight-\SCheight_\basisind).
\end{align}
\end{remark}

\subsubsection{First-order partial moments}
For the partial moment basis, the rank is obtained by counting the intervals in the partition $\partition$ where the representing distribution has non-zero mass.
\begin{definition}
Let $\moments[{\pmbasisIk}]=\left(\momentcomp{0,0},\momentcomp{1,0},\ldots,\momentcomp{0,\hankelhalfind-1},\momentcomp{1,\hankelhalfind-1}\right)^T$ be given. We define its \emph{rank} with respect to the partial moment basis by
	\begin{align*}
          \rank{\pmbasisIk}{\moments} \coloneqq \abs{\left\{\cellind\in\{0,\ldots,\hankelhalfind-1\}~:~ \momentcomp{\cellind,0}>0 \right\}}.
	\end{align*}
\end{definition}
The partial moment analogue of \thmref{thm:FullMomentRealizability} is then easily obtained.
\begin{theorem}\label{thm:realizabilitypm1d}
Let $\moments = \left(\momentcomp{0,0},\momentcomp{1,0},\ldots,\momentcomp{0,\hankelhalfind-1},\momentcomp{1,\hankelhalfind-1}\right)^T \in \R^{2\hankelhalfind}$ be given. Then the following are equivalent:
\begin{enumerate}
	\item $\moments\in\RDdist{\pmbasisIk}$.
	\item $\moments$ satisfies
	\begin{align}
	\momentcomp{0,\cellind}>0 \quand \frac{\momentcomp{1,\cellind}}{\momentcomp{0,\cellind}}\in\ccell{\cellind}
        \quad \text{ or } \quad \momentcomp{0,\cellind} =\momentcomp{1,\cellind} = 0 \qquad \cellind=0,\ldots,\hankelhalfind-1.
	\end{align}
    \item There exists an atomic $\pmbasisIk$-representing density with $\hankelrank : = \rank{\pmbasisIk}{\moments}$ atoms.
  \end{enumerate}
\end{theorem}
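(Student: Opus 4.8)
The plan is to exploit the defining structural feature of the partial moment basis: the element bases $\pmbasis[\cell{\cellind}]$ have pairwise disjoint interiors of support, so the moment map splits into $\hankelhalfind$ completely independent blocks, one per interval. Concretely, a non-negative piecewise distribution $\distribution = (\distribution[0], \ldots, \distribution[\hankelhalfind-1])$ represents $\moments$ if and only if each restriction $\distribution[\cellind] \geq 0$ represents the local pair $(\momentcomp{0,\cellind}, \momentcomp{1,\cellind})$ on $\cell{\cellind}$ against the local basis $(1,\SCheight)^T$. Thus the whole theorem reduces to a single-interval claim: for $\cell{\cellind} = [\SCheight_\cellind, \SCheight_{\cellind+1}]$, the pair $(\momentcomp{0,\cellind}, \momentcomp{1,\cellind})$ is representable by a non-negative distribution on $\cell{\cellind}$ iff it satisfies the condition in item~2, and in that case by a \emph{single} atom. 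Establishing each interval separately and summing then yields both the realizability characterization and the atom count $\hankelrank = \rank{\pmbasisIk}{\moments}$.

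For $1 \Rightarrow 2$ I would first invoke the fact already recorded above (citing \cite[Theorem 2.1.7]{Hoermander2015}) that a non-negative distribution has order $0$, hence is a non-negative Radon measure, so each $\distribution[\cellind]$ may be paired with the continuous functions $1$ and $\SCheight$ on the \emph{closed} interval $\cell{\cellind}$ via the boundary extension of \rmref{rem:pmbasisextension}. Then $\momentcomp{0,\cellind} = \distribution[\cellind](1)$ is the total mass and is $\geq 0$. If $\momentcomp{0,\cellind} = 0$, a non-negative measure of zero mass is the zero measure, forcing $\momentcomp{1,\cellind} = 0$. If $\momentcomp{0,\cellind} > 0$, then $\momentcomp{1,\cellind}/\momentcomp{0,\cellind}$ is the barycenter of the probability measure $\distribution[\cellind]/\momentcomp{0,\cellind}$, whose support lies in $\cell{\cellind}$, so it lies in $\ccell{\cellind}$. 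This is exactly item~2.

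For $2 \Rightarrow 3$ I would construct the atomic density explicitly. On every interval with $\momentcomp{0,\cellind} > 0$ I place a single Dirac at the barycenter,
\[
  \distribution[\cellind] = \momentcomp{0,\cellind}\,\dirac\!\left(\SCheight - \tfrac{\momentcomp{1,\cellind}}{\momentcomp{0,\cellind}}\right),
\]
which reproduces $\momentcomp{0,\cellind}$ and $\momentcomp{1,\cellind}$ because the barycenter lies in $\ccell{\cellind}$ (if it hits an endpoint the single-atom representation is still valid, the Dirac being assigned full mass on the boundary as stipulated in the remark preceding the theorem). On intervals with $\momentcomp{0,\cellind} = \momentcomp{1,\cellind} = 0$ I place nothing. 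Summing the local Diracs gives a non-negative piecewise distribution whose number of atoms equals the number of intervals with $\momentcomp{0,\cellind} > 0$, i.e.\ $\hankelrank = \rank{\pmbasisIk}{\moments}$. The implication $3 \Rightarrow 1$ is immediate, since an atomic density is a non-negative piecewise distribution.

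The hard part is not the algebra—the barycenter/single-atom argument is elementary—but making the distributional bookkeeping at the shared nodes rigorous. Because the partial moment basis is discontinuous, an atom landing on a node $\SCheight_\cellind$ could in principle be attributed to either adjacent block; the order-$0$ (measure) property together with the boundary conventions of \rmref{rem:pmbasisextension} and the Dirac-on-boundary convention are precisely what let me assign such an atom unambiguously to one interval while still reproducing the local moments exactly. This is the only point where the generalized set $\RDdist{\pmbasisIk}$ genuinely departs from the integrable-function set $\RD{\pmbasisIk}{}$, and it is exactly why the distributional realizable set is the natural object for statement~1.
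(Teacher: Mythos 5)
Your proof is correct, but it takes a different route from the paper. The paper disposes of this theorem in one line: since the partial moment basis restricted to each interval $\cell{\cellind}$ is just the full monomial basis of order $1$, the single-interval realizability characterization and the atomic representation are imported wholesale from the truncated Hausdorff moment problem results of Curto and Fialkow \cite[Thm.~4.1+4.3]{Curto1991}. You instead prove the single-interval claim from scratch: you use the block decomposition (which is also implicit in the paper's citation), observe that a non-negative distribution is of order $0$ and hence a non-negative Radon measure that can be paired with $1$ and $\SCheight$ on the closed interval, and then read off item~2 as the statement that the barycenter of the normalized measure lies in $\ccell{\cellind}$ (with the degenerate zero-mass case handled separately); the converse is your explicit one-atom-per-active-interval construction, which also delivers the atom count $\rank{\pmbasisIk}{\moments}$. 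What your version buys is self-containedness and an explicit treatment of the distributional bookkeeping at shared nodes (boundary extension of the basis, Dirac-on-boundary convention), which the paper leaves to the reader and to \rmref{rem:pmbasisextension}; what the paper's version buys is brevity and a pointer to the general-order theory, of which the first-order case used here is the elementary special case you reprove.
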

\begin{proof}
  As the partial moments are just full moments of order 1 on each interval, this follows immediately from the results in \cite[Thm. 4.1+4.3]{Curto1991}.
\end{proof}
\begin{corollary}\label{cor:pm1drealizabilitypos}
  $\moments\in\RDpos{\pmbasisIk}$ if and only if
  \begin{align*}
    \momentcomp{0,\cellind}>0 \quand \frac{\momentcomp{1,\cellind}}{\momentcomp{0,\cellind}}\in\interior{\cell{\cellind}}, \qquad
       \cellind=0,\ldots,\hankelhalfind-1.
   \end{align*}
\end{corollary}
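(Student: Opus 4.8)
The plan is to deduce the characterization of $\RDpos{\pmbasisIk}$ from the explicit description of $\RDdist{\pmbasisIk}$ in \thmref{thm:realizabilitypm1d} by passing to interiors, mirroring the proof of \corref{cor:hf1drealizabilitypos}. The two ingredients I would take from the structure relation \eqref{eq:fullrealizablesetinclusions} are $\RDpos{\pmbasisIk} = \interior{\RD{\pmbasisIk}{}}$ and $\RDdist{\pmbasisIk} = \closure{\RD{\pmbasisIk}{}}$. Because $\RD{\pmbasisIk}{}$ is a convex cone whose interior $\RDpos{\pmbasisIk}$ is nonempty (by \thmref{thm:RealizableSetInclusions} it is the diffeomorphic image \eqref{eq:momentmapdiffeo}), the standard convex-analysis fact $\interior{\closure{C}} = \interior{C}$ applies, giving
\begin{align*}
\RDpos{\pmbasisIk} = \interior{\RD{\pmbasisIk}{}} = \interior{\closure{\RD{\pmbasisIk}{}}} = \interior{\RDdist{\pmbasisIk}}.
\end{align*}
Thus it suffices to compute the topological interior of the set described by condition~2 of \thmref{thm:realizabilitypm1d}.

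Next I would use that the partial-moment realizable set factors over the intervals of $\partition$. Since the element bases $\pmbasis[\cell{\cellind}]$ have pairwise disjoint supports, the moment block $(\momentcomp{0,\cellind},\momentcomp{1,\cellind})$ depends only on the restriction of the representing (piecewise) distribution to $\cell{\cellind}$. Hence $\RDdist{\pmbasisIk} = \prod_{\cellind=0}^{\hankelhalfind-1}\widetilde{\mathcal{R}}_\cellind$, where, by \thmref{thm:realizabilitypm1d},
\begin{align*}
\widetilde{\mathcal{R}}_\cellind = \left\{(\momentcomp{0,\cellind},\momentcomp{1,\cellind})\in\R^2 : \momentcomp{0,\cellind}>0,\ \tfrac{\momentcomp{1,\cellind}}{\momentcomp{0,\cellind}}\in\ccell{\cellind}\right\}\cup\{\mathbf{0}\}
\end{align*}
is the single-interval generalized realizable set. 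As neighborhoods in $\R^{2\hankelhalfind}$ split as products of neighborhoods in the coordinate pairs, the interior of this product is the product of the interiors, so the claim reduces to determining $\interior{\widetilde{\mathcal{R}}_\cellind}$.

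Finally I would carry out the elementary planar computation. Writing $\cell{\cellind}=[\SCheight_\cellind,\SCheight_{\cellind+1}]$, the set $\widetilde{\mathcal{R}}_\cellind$ is the closed convex cone
\begin{align*}
\widetilde{\mathcal{R}}_\cellind = \left\{\momentcomp{0,\cellind}>0,\ \SCheight_\cellind\,\momentcomp{0,\cellind}\le \momentcomp{1,\cellind}\le \SCheight_{\cellind+1}\,\momentcomp{0,\cellind}\right\}\cup\{\mathbf{0}\},
\end{align*}
bounded by the two rays through the origin of slopes $\SCheight_\cellind$ and $\SCheight_{\cellind+1}$. Its boundary consists exactly of these two rays (where $\momentcomp{1,\cellind}/\momentcomp{0,\cellind}$ hits an endpoint of $\cell{\cellind}$) together with the apex $\mathbf{0}$; removing them leaves
\begin{align*}
\interior{\widetilde{\mathcal{R}}_\cellind} = \left\{(\momentcomp{0,\cellind},\momentcomp{1,\cellind}) : \momentcomp{0,\cellind}>0,\ \tfrac{\momentcomp{1,\cellind}}{\momentcomp{0,\cellind}}\in\interior{\cell{\cellind}}\right\}.
\end{align*}
Taking the product over $\cellind=0,\ldots,\hankelhalfind-1$ yields precisely the asserted condition. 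The only delicate point is the identity $\interior{\closure{\RD{\pmbasisIk}{}}} = \interior{\RD{\pmbasisIk}{}}$, which hinges on convexity of the realizable set and nonemptiness of its interior; everything after that is the routine two-dimensional interior computation.
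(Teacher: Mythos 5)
Your proof is correct and follows the same route the paper (implicitly) takes: the corollary is obtained from \thmref{thm:realizabilitypm1d} by identifying $\RDpos{\pmbasisIk}$ with the interior of $\RDdist{\pmbasisIk}$ via \eqref{eq:fullrealizablesetinclusions}, exactly as in the one-line proof of the analogous \corref{cor:hf1drealizabilitypos}. You merely spell out the details the paper leaves implicit --- the convex-analysis identity $\interior{\closure{\RD{\pmbasisIk}{}}}=\interior{\RD{\pmbasisIk}{}}$, the product structure of the realizable set over the intervals of $\partition$, and the elementary planar interior computation --- and all of these steps are sound.
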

\begin{remark}
  Possible $\pmbasisIk$-representing densities for $\moments\in\RDpos{\pmbasisIk}$ include
\begin{align*}
\distribution|_{\cell{\cellind}} = \momentcomp{0,\cellind}\dirac(\SCheight-\frac{\momentcomp{1,\cellind}}{\momentcomp{0,\cellind}}).
\end{align*}
and
\begin{align}
\label{eq:PMRepresentingDistribution}
\distribution|_{\cell{\cellind}} = \frac{\SCheight_{\cellind+1}\momentcomp{0,\cellind} - \momentcomp{1,\cellind}}{\SCheight_{\cellind+1}-\SCheight_{\cellind}}
\dirac(\SCheight-\SCheight_{\cellind})
+ \frac{\momentcomp{1,\cellind} - \SCheight_{\cellind}\momentcomp{0,\cellind}}{\SCheight_{\cellind+1}-\SCheight_{\cellind}} \dirac(\SCheight-\SCheight_{\cellind+1})
\end{align}
\end{remark}

\subsection{Three dimensions}
We will now turn to the fully three-dimensional setting. Since the rank of a moment vector with respect to the piecewise linear bases in three dimensions
is much harder to define than in one dimension (among others, it depends non-trivially on the partition $\TriangulationSphere$ of the sphere)
we will restrict ourselves to deriving realizability conditions.

\subsubsection{Spherical barycentric}
\begin{theorem}
\label{thm:realizabilityhf3d}
Let $\moments = \left(\momentcomp{0},\ldots,\momentcomp{\nvertex-1}\right)^T\in\R^{\nvertex}$ be a vector of moments.
Then it is necessary and sufficient for the existence of a non-negative distribution $\distribution$ which realizes $\moments$ with respect to $\hfbasisTh$ that
\begin{equation}
\label{eq:RealizabilityBCa}
\momentcomp{\basisind}\geq 0 \qquad \text{ for all } \basisind = 0,\ldots,\nvertex-1.
\end{equation}
\end{theorem}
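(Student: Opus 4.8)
The plan is to follow the same two-step strategy as in the one-dimensional hat-function case (\thmref{thm:realizabilityhf1d}): establish the necessity of \eqref{eq:RealizabilityBCa} directly from the non-negativity of the basis, and establish sufficiency by writing down an explicit atomic representing distribution whose validity hinges on the Lagrange property of the spherical barycentric coordinates.

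For necessity, I would suppose that $\distribution \geq 0$ is a non-negative (piecewise) distribution with $\moments = \ints{\hfbasisTh \distribution}$. By \lemref{lem:BCproperties}(b), each basis function satisfies $\hfbasiscomp[\basisind] \geq 0$ on all of $\sphere$, and the spherical barycentric coordinates are continuous. Since a non-negative distribution is of order $0$ and may therefore be evaluated against continuous test functions, I conclude $\momentcomp{\basisind} = \ints{\hfbasiscomp[\basisind] \distribution} \geq 0$ for every $\basisind$, which is exactly \eqref{eq:RealizabilityBCa}.

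For sufficiency, I would assume $\momentcomp{\basisind} \geq 0$ for all $\basisind$ and define the atomic distribution
\begin{align*}
  \distribution = \sum_{\basisind=0}^{\nvertex-1} \momentcomp{\basisind}\, \dirac(\SC - \vertex[\basisind]),
\end{align*}
placing the atom of mass $\momentcomp{\basisind}$ at the vertex $\vertex[\basisind]$ (the three-dimensional analogue of the construction \eqref{eq:HFRepresentingDistribution}). To verify that this realizes $\moments$, I evaluate $\ints{\hfbasiscomp[\basisindvar] \distribution} = \sum_{\basisind} \momentcomp{\basisind}\, \hfbasiscomp[\basisindvar](\vertex[\basisind])$ and invoke the Lagrange property from \defnref{def:BCfuns} (equation \eqref{eq:BC1}), which gives $\hfbasiscomp[\basisindvar](\vertex[\basisind]) = 1$ when $\basisind = \basisindvar$ and $0$ otherwise. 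Hence $\ints{\hfbasiscomp[\basisindvar] \distribution} = \momentcomp{\basisindvar}$, so the non-negative $\distribution$ represents $\moments$ and \eqref{eq:RealizabilityBCa} is sufficient.

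The one point requiring care — and the main obstacle — is that each atom sits at a vertex $\vertex[\basisind]$ lying on the boundary shared by all triangles incident to it, so the construction must be interpreted inside the piecewise-distribution framework of \defnref{def:piecewisedistribution}. I would resolve this exactly as in the remark following \thmref{thm:realizabilityhf1d}: assign each vertex-atom to one fixed incident triangle $\sphericaltriangle$. The verification is then unaffected, since on $\sphericaltriangle$ the restriction $\hfbasiscomp[\basisindvar]|_{\sphericaltriangle}$ is identically zero whenever $\vertex[\basisindvar]$ is not a vertex of $\sphericaltriangle$, while for the three incident vertices the Lagrange property still yields $\hfbasiscomp[\basisindvar]|_{\sphericaltriangle}(\vertex[\basisind]) = 1$ iff $\basisind = \basisindvar$. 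Because the barycentric basis is continuous across edges, the choice of incident triangle is immaterial and the computed moments do not depend on it. This construction also supplies the atomic representing density referenced in the proof of \thmref{thm:RealizableSetInclusions}.
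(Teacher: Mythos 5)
Your proposal is correct and follows essentially the same route as the paper: necessity from the non-negativity of the spherical barycentric basis functions (\lemref{lem:BCproperties}), and sufficiency via the atomic distribution $\sum_{\basisind} \momentcomp{\basisind}\diracmd(\SC - \vertex[\basisind])$ verified through the Lagrange property \eqref{eq:BC1}. Your additional care about assigning vertex-atoms to incident triangles is a valid elaboration of a point the paper only addresses in the remark following \thmref{thm:realizabilityhf1d}, and does not change the substance of the argument.
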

\begin{proof}
The proof works similar to the one-dimensional setting (compare \thmref{thm:realizabilityhf1d}).
We first show that equation \eqref{eq:RealizabilityBCa} is necessary. Let
$\distribution\geq 0$ be arbitrary. By \lemref{lem:BCproperties}(b) it holds
that $\hfbasiscomp[\basisind]\geq 0$ which implies that $\momentcomp{\basisind}
= \ints{\hfbasiscomp[\basisind]\distribution}\geq 0$ and thus
\eqref{eq:RealizabilityBCa}.

Assume that $\moments$ satisfies \eqref{eq:RealizabilityBCa}. A representing distribution is given by
\begin{equation}
\distribution = \sum\limits_{\basisind=0}^{\nvertex-1} \momentcomp{\basisind}\diracmd\left(\SC-\vertex[\basisind]\right).
\end{equation}
Due to \eqref{eq:BC1} it follows that $$\ints{\hfbasiscomp[\basisindvar]\distribution} = \sum\limits_{\basisind=0}^{\nvertex-1} \momentcomp{\basisind}\hfbasiscomp[\basisindvar]\left(\vertex[\basisind]\right) \stackrel{\eqref{eq:BC1}}{=} \sum\limits_{\basisind=0}^{\nvertex-1} \momentcomp{\basisind} \dirac_{\basisind\basisindvar} = \momentcomp{\basisindvar}.$$
\end{proof}

\begin{remark}
We want to note that the above proof is not limited to barycentric coordinates but also works for other bases with values in $[0,1]$ and the Lagrange interpolation property \eqref{eq:BC1}.
\end{remark}

\begin{corollary}\label{cor:hf3drealizabilitypos}
  $\moments\in \RDpos{\hfbasisTh}$ if and only if
  \begin{align*}
    \momentcomp{\basisind} > 0 \qquad \text{ for all } \basisind = 0,\ldots,\nvertex-1.
\end{align*}
\end{corollary}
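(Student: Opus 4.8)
The plan is to mirror the proof of Corollary \ref{cor:hf1drealizabilitypos} in the one-dimensional case, simply replacing the one-dimensional realizability characterization by its three-dimensional counterpart, \thmref{thm:realizabilityhf3d}. The whole argument rests on identifying $\RDpos{\hfbasisTh}$ with the interior of the explicitly described generalized realizable set.

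First I would recall from the chain of inclusions \eqref{eq:fullrealizablesetinclusions} that $\RDpos{\hfbasisTh} = \interior{\RD{\hfbasisTh}{}}$, together with the sandwich $\RD{\hfbasisTh}{} \subset \RDdist{\hfbasisTh} \subset \closure{\RD{\hfbasisTh}{}}$. Since the realizable set is a convex cone whose interior is nonempty (being the image $\moments(\R^{\nvertex})$ of the diffeomorphism \eqref{eq:momentmapdiffeo}, hence open), the standard convex-analysis fact that $\interior{\closure{C}} = \interior{C}$ for a convex set $C$ with nonempty interior lets me collapse the sandwich upon taking interiors: $\interior{\RD{\hfbasisTh}{}} = \interior{\RDdist{\hfbasisTh}}$. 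Thus $\RDpos{\hfbasisTh}$ is exactly the interior of $\RDdist{\hfbasisTh}$, precisely as in the one-dimensional corollary.

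Second, I would invoke \thmref{thm:realizabilityhf3d}, which characterizes $\RDdist{\hfbasisTh}$ as the closed nonnegative orthant $\{\moments \in \R^{\nvertex} : \momentcomp{\basisind} \geq 0 \text{ for all } \basisind = 0, \ldots, \nvertex-1\}$. The interior of this orthant is the open positive orthant $\{\moments : \momentcomp{\basisind} > 0 \text{ for all } \basisind\}$, which is the asserted description of $\RDpos{\hfbasisTh}$.

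The main obstacle is precisely the passage from the explicit set $\RDdist{\hfbasisTh}$ (given by \thmref{thm:realizabilityhf3d}) to $\interior{\RD{\hfbasisTh}{}}$ (which defines $\RDpos{\hfbasisTh}$): one must be sure that taking interiors is insensitive to the difference between $\RD{\hfbasisTh}{}$, $\RDdist{\hfbasisTh}$ and $\closure{\RD{\hfbasisTh}{}}$. This is exactly where convexity of the realizable cone is indispensable, since for a non-convex set the interior of the closure can be strictly larger than the interior of the set itself. Everything beyond this point is the one-line observation that the interior of a nonnegative orthant is the corresponding positive orthant.
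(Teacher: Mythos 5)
Your proof is correct and follows essentially the same route as the paper: the paper's (one-line) justification of the analogous one-dimensional Corollary~\ref{cor:hf1drealizabilitypos} is precisely that $\RDpos{\basis}$ is the interior of $\RDdist{\basis}$ by the chain \eqref{eq:fullrealizablesetinclusions}, combined with the explicit description of $\RDdist{\basis}$ from the realizability theorem, and the three-dimensional corollary is intended to follow from \thmref{thm:realizabilityhf3d} in exactly the same way. If anything, you are more careful than the paper in spelling out why taking interiors collapses the sandwich $\RD{\hfbasisTh}{} \subset \RDdist{\hfbasisTh} \subset \closure{\RD{\hfbasisTh}{}}$ (via convexity and nonemptiness of the interior), a step the paper leaves implicit.
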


\subsubsection{Partial moments}
\begin{theorem}
\label{thm:realizabilitypm3d}
Let $\moments = \left(\momentstens[\sphericaltriangle]{0},\momentstens[\sphericaltriangle]{1}\right)^T\in\R^4$ be a vector of moments. Then it is necessary and sufficient for the existence of a
non-negative distribution $\distribution\not\equiv 0$ which realizes $\moments$ with respect to $\pmbasis[\sphericaltriangle]$ that
\begin{subequations}
\label{eq:RealizabilityPartialMoments}
\begin{equation}
\label{eq:QuarterMomentsRealizableU0}
\momentstens[\sphericaltriangle]{0}>0
\end{equation}
and the normalized first moment
\begin{equation}
\normalizedmomentstens[\sphericaltriangle]{1} := \frac{\momentstens[\sphericaltriangle]{1}}{\momentstens[\sphericaltriangle]{0}}\label{eq:NormalizedMoments}
\end{equation}
\end{subequations}
satisfies $\normalizedmomentstens[\sphericaltriangle]{1} \in \convexhull{\sphericaltriangle}$, where $\convexhull{\cdot}$ denotes the convex hull.
\end{theorem}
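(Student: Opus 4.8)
The plan is to observe that, after normalizing by the zeroth moment, the two stated conditions say exactly that $\normalizedmomentstens[\sphericaltriangle]{1}$ is the barycenter of a probability measure supported on $\sphericaltriangle$, and that the set of such barycenters is precisely $\convexhull{\sphericaltriangle}$. Throughout I would use that a non-negative distribution on the compact set $\sphericaltriangle$ is of order $0$ \cite[Theorem 2.1.7]{Hoermander2015}, hence a non-negative Radon measure that may be paired with any continuous function on $\sphericaltriangle$; in particular it may be paired with the basis components $1,\SCx,\SCy,\SCz$ of $\pmbasis[\sphericaltriangle]$ and, more generally, with any affine function of $\SC$. This is the only fact that lets the ``test functions'' below range beyond $\mathcal{C}_c^\infty$.

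For necessity, suppose $\distribution\geq 0$ with $\distribution\not\equiv 0$ realizes $\moments$ with respect to $\pmbasis[\sphericaltriangle]$. Evaluating the zeroth component, $\momentstens[\sphericaltriangle]{0}$ is the total mass of $\distribution$, which is strictly positive since a non-zero non-negative measure on a compact set has positive mass; hence $\normalizedmomentstens[\sphericaltriangle]{1}$ is well defined. To show $\normalizedmomentstens[\sphericaltriangle]{1}\in\convexhull{\sphericaltriangle}$, I would argue by a separating hyperplane. Since $\sphericaltriangle$ is compact, $\convexhull{\sphericaltriangle}$ is compact and convex, so if $\normalizedmomentstens[\sphericaltriangle]{1}\notin\convexhull{\sphericaltriangle}$ there are $\boldsymbol{w}\in\R^3$ and $c\in\R$ with $\boldsymbol{w}\cdot\normalizedmomentstens[\sphericaltriangle]{1}>c$ and $\boldsymbol{w}\cdot\SC\leq c$ for all $\SC\in\sphericaltriangle$. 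The affine function $\SC\mapsto c-\boldsymbol{w}\cdot\SC$ is then non-negative on $\sphericaltriangle$ and is a linear combination of $1,\SCx,\SCy,\SCz$, so non-negativity of $\distribution$ yields $c\,\momentstens[\sphericaltriangle]{0}-\boldsymbol{w}\cdot\momentstens[\sphericaltriangle]{1}=\ints{\distribution\,(c-\boldsymbol{w}\cdot\SC)}\geq 0$. Dividing by $\momentstens[\sphericaltriangle]{0}>0$ gives $\boldsymbol{w}\cdot\normalizedmomentstens[\sphericaltriangle]{1}\leq c$, a contradiction.

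For sufficiency, assume $\momentstens[\sphericaltriangle]{0}>0$ and $\normalizedmomentstens[\sphericaltriangle]{1}\in\convexhull{\sphericaltriangle}$. By Carath\'eodory's theorem in $\R^3$ there are points $\SC_1,\ldots,\SC_4\in\sphericaltriangle$ and weights $\lambda_1,\ldots,\lambda_4\geq 0$ with $\sum_i\lambda_i=1$ and $\normalizedmomentstens[\sphericaltriangle]{1}=\sum_i\lambda_i\SC_i$. I then set
\[
  \distribution=\momentstens[\sphericaltriangle]{0}\sum_{i=1}^{4}\lambda_i\,\diracmd(\SC-\SC_i),
\]
which is non-negative and, since $\momentstens[\sphericaltriangle]{0}>0$, not identically zero. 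Pairing against $\pmbasis[\sphericaltriangle]$ gives $\ints{\distribution}=\momentstens[\sphericaltriangle]{0}\sum_i\lambda_i=\momentstens[\sphericaltriangle]{0}$ and $\ints{\SC\distribution}=\momentstens[\sphericaltriangle]{0}\sum_i\lambda_i\SC_i=\momentstens[\sphericaltriangle]{0}\,\normalizedmomentstens[\sphericaltriangle]{1}=\momentstens[\sphericaltriangle]{1}$, so $\distribution$ realizes $\moments$ with at most four atoms. This also supplies the atomic representing density invoked in the proof of \thmref{thm:RealizableSetInclusions}.

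The computations are routine; the delicate points are distribution-theoretic. The first is the one flagged above: the realizing object is a priori only a non-negative distribution, yet in the separating-hyperplane step I test it against an affine function that is merely continuous, not compactly-supported-smooth — this is exactly where the order-$0$ property is needed. The second is that some Carath\'eodory atoms may land on $\partial\sphericaltriangle$, where the partial-moment basis was left undefined; this is admissible because of the continuous-extension convention of Remark~\ref{rem:pmbasisextension}, so evaluating $1,\SCx,\SCy,\SCz$ at a boundary atom is unambiguous. Everything else — compactness (hence closedness) of $\convexhull{\sphericaltriangle}$ and the two directions of the barycenter characterization — is standard convex geometry and presents no real obstacle.
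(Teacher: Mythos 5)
Your proof is correct, and it differs from the paper's in both directions of the equivalence. For necessity, the paper simply asserts that the conditions ``follow immediately'' from non-negativity, support in $\sphericaltriangle$, and linearity of integration; you instead make this precise via a strict separating hyperplane for the compact convex set $\convexhull{\sphericaltriangle}$, testing the (order-$0$, hence measure-valued) distribution against the non-negative affine function $c-\boldsymbol{w}\cdot\SC$, which is a linear combination of the basis components. This is a more careful argument and correctly isolates the only distribution-theoretic subtlety. For sufficiency, the paper exploits the specific geometry: it writes down a single Dirac on $\sphericaltriangle$ when $\norm{\normalizedmomentstens[\sphericaltriangle]{1}}{2}=1$, a three-atom vertex distribution weighted by flat barycentric coordinates when $\normalizedmomentstens[\sphericaltriangle]{1}\in\flattriangle$, and for interior points takes a convex combination of the two along the ray from the origin. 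Your route via Carath\'eodory's theorem in $\R^3$ (at most four atoms at points of $\sphericaltriangle$) is shorter and sidesteps the paper's implicit geometric claims about the boundary structure of $\convexhull{\sphericaltriangle}$ and about the radial line meeting $\flattriangle$; what you give up is the explicit closed-form representing density \eqref{eq:psiLow} in terms of the vertices, which is the form the paper can reuse computationally. Both constructions deliver the atomic representing density needed in the proof of \thmref{thm:RealizableSetInclusions}, and your handling of atoms on $\partial\sphericaltriangle$ via Remark~\ref{rem:pmbasisextension} is the right way to close that loose end.
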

\begin{proof}
The necessity of \eqref{eq:RealizabilityPartialMoments} follows immediately, since by assumption
$\distribution\geq 0$ is supported in $\sphericaltriangle$ and integration is a linear operation.

To show the sufficiency of \eqref{eq:RealizabilityPartialMoments} we give a realizing distribution
function under this assumption. The boundary of the convex hull $\convexhull{\sphericaltriangle}$ is
given by $\sphericaltriangle$ and $\flattriangle$. First, we give realizing distributions on these
boundary parts. Due to the convexity of the convex hull and the linearity of the integral, every
interior point can then be reproduced by a suitable convex combination of these two candidates.

Assume that $\norm{\normalizedmomentstens[\sphericaltriangle]{1}}{2} = 1$, i.e. $\normalizedmomentstens[\sphericaltriangle]{1}\in \sphericaltriangle$. A realizing distribution is given by
\begin{align}
\label{eq:psiUp}
\distribution[\sphericaltriangle] = \momentstens[\sphericaltriangle]{0}\diracmd\left(\SC-\normalizedmomentstens[\sphericaltriangle]{1}\right),
\end{align}
where $\diracmd$ denotes the multi-dimensional Dirac-delta distribution. By
assumption, $\distribution[\sphericaltriangle]$ is supported in $\sphericaltriangle$ (this
is no longer true if $\norm{\normalizedmomentstens[\sphericaltriangle]{1}}{2}< 1$).  Thus
\begin{align*}
\ints{\distribution[\sphericaltriangle]} = \momentstens[\sphericaltriangle]{0}\quand \ints{\SC\distribution[\sphericaltriangle]} = \momentstens[\sphericaltriangle]{0}\normalizedmomentstens[\sphericaltriangle]{1} \stackrel{\eqref{eq:NormalizedMoments}}{=} \momentstens[\sphericaltriangle]{1}.
\end{align*}

If $\normalizedmomentstens[\sphericaltriangle]{1}\in \flattriangle$ (the opposite boundary of the convex hull), a realizing distribution is
\begin{align}
\label{eq:psiLow}
\distribution[\flattriangle] = \momentstens[\sphericaltriangle]{0}\left(\bcA\diracmd\left(\SC-\vertexA\right)+\bcB\diracmd\left(\SC-\vertexB\right)+\bcC\diracmd\left(\SC-\vertexC\right)\right),
\end{align}
where $\bcA$, $\bcB$, $\bcC$ are the barycentric coordinates of $\normalizedmomentstens[\sphericaltriangle]{1}$ with respect to the
vertices $\vertexA$, $\vertexB$ and $\vertexC$ on $\flattriangle$, i.e. $\bcA+\bcB+\bcC = 1$ and $\bcA\vertexA+\bcB\vertexB+\bcC\vertexC = \normalizedmomentstens[\sphericaltriangle]{1}$.
It immediately follows that $\distribution[\flattriangle]$ is supported in $\sphericaltriangle$ and realizes $\moments$.

If $\normalizedmomentstens[\sphericaltriangle]{1}$ is in the interior of the convex hull, we find
that $\frac{\normalizedmomentstens[\sphericaltriangle]{1}}{\norm{\normalizedmomentstens[\sphericaltriangle]{1}}{2}}$ can be realized by \eqref{eq:psiUp} and the intersection point of
the line from the origin $(0,0,0)$ and $\frac{\normalizedmomentstens[\sphericaltriangle]{1}}{\norm{\normalizedmomentstens[\sphericaltriangle]{1}}{2}}$ with $\flattriangle$ can be
realized by \eqref{eq:psiLow}. Certainly, $\normalizedmomentstens[\sphericaltriangle]{1}$ lies on the same line and can thus be realized by a convex combination of these two distribution functions.
\end{proof}
\begin{remark}
	Note that, similarly to the one-dimensional case, the $\pmbasisTh$-representing non-negative piecewise distribution for a moment vector $\moments$ might have
zero mass on some of the spherical triangles. In this case $\momentstens[\sphericaltriangle]{0} = 0$ and $\momentstens[\sphericaltriangle]{1} = \mathbf{0}$.
\end{remark}

\begin{corollary}\label{cor:pm3drealizabilitypos}
  $\moments\in \RDpos{\pmbasisTh}$ if and only if
  \begin{align*}
    \momentstens[\sphericaltriangle]{0}>0  \, \text{ and } \,
    \normalizedmomentstens[\sphericaltriangle]{1} \in \interior{\convexhull{\sphericaltriangle}}
    \, \text{ for all } \, \sphericaltriangle \in \TriangulationSphere.
\end{align*}
\end{corollary}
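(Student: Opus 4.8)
The plan is to follow exactly the route used for the one-dimensional Corollaries~\ref{cor:hf1drealizabilitypos} and \ref{cor:pm1drealizabilitypos}: combine the explicit description of the generalized (closed) realizable set in \thmref{thm:realizabilitypm3d} with the identity $\RDpos{\pmbasisTh} = \interior{\RDdist{\pmbasisTh}}$, and then simply take the interior of the set cut out by \thmref{thm:realizabilitypm3d}. The identity itself I would read off from the structure chain \eqref{eq:fullrealizablesetinclusions}: there $\RDpos{\pmbasisTh} = \interior{\RD{\pmbasisTh}{}}$, while for the bases considered here $\RDdist{\pmbasisTh} = \closure{\RD{\pmbasisTh}{}}$. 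Since $\RD{\pmbasisTh}{}$ is a convex cone with non-empty interior (it contains $\AnsatzSpace \neq \emptyset$ by \thmref{thm:RealizableSetInclusions}), the standard convex-analysis fact $\interior{\closure{K}} = \interior{K}$ for convex $K$ with non-empty interior yields $\RPos := \RDpos{\pmbasisTh} = \interior{\RDdist{\pmbasisTh}}$. Hence everything reduces to computing this interior.

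Next I would exploit that the conditions decouple over the triangulation. Because each basis block $\pmbasis[\sphericaltriangle]$ is supported on $\sphericaltriangle$ alone, the moment block $\moments[\sphericaltriangle] = (\momentstens[\sphericaltriangle]{0}, \momentstens[\sphericaltriangle]{1})$ depends only on $\distribution|_{\sphericaltriangle}$, and these restrictions may be prescribed independently. Thus $\RDdist{\pmbasisTh}$ is the Cartesian product over $\sphericaltriangle \in \TriangulationSphere$ of the per-triangle sets $C_{\sphericaltriangle} \coloneqq \bigl\{(\momentstens[\sphericaltriangle]{0}, \momentstens[\sphericaltriangle]{1}) : \momentstens[\sphericaltriangle]{0} > 0,\ \normalizedmomentstens[\sphericaltriangle]{1} \in \convexhull{\sphericaltriangle}\bigr\} \cup \{\mathbf{0}\}$, where the isolated origin is exactly the zero-mass case recorded in the remark after \thmref{thm:realizabilitypm3d}. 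Since $\TriangulationSphere$ is finite, $\interior{\prod_{\sphericaltriangle} C_{\sphericaltriangle}} = \prod_{\sphericaltriangle} \interior{C_{\sphericaltriangle}}$, so the corollary follows once I show $\interior{C_{\sphericaltriangle}} = \bigl\{\momentstens[\sphericaltriangle]{0} > 0,\ \normalizedmomentstens[\sphericaltriangle]{1} \in \interior{\convexhull{\sphericaltriangle}}\bigr\}$ for a single triangle.

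For that single-triangle interior I would argue as follows. One checks that $C_{\sphericaltriangle} \subset \R^4$ is a closed convex cone: the normalization of a sum is a convex combination of the summands' normalizations and $\convexhull{\sphericaltriangle}$ is convex and compact (so the only limit point with $\momentstens[\sphericaltriangle]{0} = 0$ is the origin, already included). On the open half-space $\{\momentstens[\sphericaltriangle]{0} > 0\}$ the normalization map $\Phi: (\momentstens[\sphericaltriangle]{0}, \momentstens[\sphericaltriangle]{1}) \mapsto (\momentstens[\sphericaltriangle]{0}, \normalizedmomentstens[\sphericaltriangle]{1})$ is a homeomorphism onto $(0,\infty) \times \R^3$ sending $C_{\sphericaltriangle} \cap \{\momentstens[\sphericaltriangle]{0} > 0\}$ onto $(0,\infty) \times \convexhull{\sphericaltriangle}$. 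Because $C_{\sphericaltriangle} \neq \R^4$ is a convex cone, its apex $\mathbf{0}$ is never interior, so $\interior{C_{\sphericaltriangle}} \subset \{\momentstens[\sphericaltriangle]{0} > 0\}$; transporting the interior through $\Phi$ and using $\interior{(0,\infty) \times \convexhull{\sphericaltriangle}} = (0,\infty) \times \interior{\convexhull{\sphericaltriangle}}$ gives the claimed formula, and the product then reproduces the stated characterization.

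The main obstacle is the passage from the closure condition $\normalizedmomentstens[\sphericaltriangle]{1} \in \convexhull{\sphericaltriangle}$ to the open condition $\normalizedmomentstens[\sphericaltriangle]{1} \in \interior{\convexhull{\sphericaltriangle}}$. This rests on $\convexhull{\sphericaltriangle}$ being genuinely three-dimensional — it is the solid region bounded by the flat triangle $\flattriangle$ (through $\vertexA,\vertexB,\vertexC$) and the curved cap $\sphericaltriangle$ — so that $\interior{\convexhull{\sphericaltriangle}} \neq \emptyset$ and $\Phi$ transfers $\R^4$-interiors faithfully rather than producing a spurious relative interior. I would therefore record explicitly that the curved cap is not coplanar with $\vertexA,\vertexB,\vertexC$, which guarantees full dimensionality of $\convexhull{\sphericaltriangle}$ and hence of the cone $C_{\sphericaltriangle}$; the remaining bookkeeping is routine.
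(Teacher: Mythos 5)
Your proposal is correct and follows essentially the same route the paper intends: the corollary is left without explicit proof there, but the template of Corollary~\ref{cor:hf1drealizabilitypos} (combine the $\RDdist{\basis}$-characterization of \thmref{thm:realizabilitypm3d} with $\RDpos{\basis} = \interior{\RDdist{\basis}}$ from \eqref{eq:fullrealizablesetinclusions}) is exactly what you carry out. You merely supply the topological bookkeeping (product decomposition over triangles, full dimensionality of $\convexhull{\sphericaltriangle}$) that the paper omits, and those details are sound.
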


\subsection{Numerically realizable set}
\label{subsec:numericallyrealizableset}
In general, we cannot solve the integrals in \eqref{eq:defrealizableset} analytically and have to approximate them by a numerical quadrature
$\angularQuadrature$. For the full moments, this can have a severe impact on the realizable set \cite{Alldredge2012,Schneider2015a}. In the following,
we will show that for the piecewise linear bases, except for the partial moments in three dimensions, these problems do not occur
if a suitable quadrature is chosen.

Let $\generalpartition = (\generalpart[0], \ldots, \generalpart[\hankelhalfind-1])$ be a partition of the
angular domain $\angularDomain$ in closed subsets as in \defnref{def:piecewisedistribution}
and let $\angularQuadrature = (\angularQuadrature_{0}, \ldots, \angularQuadrature_{\hankelhalfind-1})$ such that
$\angularQuadrature_{\cellind} =
\{(\angularQuadratureWeights{\angularQuadratureIndex}^{\cellind}, \angularQuadratureNodesS{\angularQuadratureIndex}^{\cellind})\}_{\angularQuadratureIndex=0}^{\angularQuadratureNumber_{\cellind}-1}$
is a quadrature on $\generalpart[\cellind]$, i.e.,
for an integrable function $\basiscomp[]$
\begin{align*}
  \intQ{\basiscomp[]} \coloneqq
  \sum_{\cellind=0}^{\hankelhalfind-1} \sum_{\angularQuadratureIndex=0}^{\angularQuadratureNumber_{\cellind}-1}
  \angularQuadratureWeights{\angularQuadratureIndex}^{\cellind}\basiscomp[]|_{\generalpart[\cellind]}(\angularQuadratureNodesS{\angularQuadratureIndex}^{\cellind})\approx\ints{\basiscomp[]}.
\end{align*}
is the approximation of the corresponding integral $\ints{\cdot}$ with the quadrature rule $\angularQuadrature$.

We define the numerically realizable set
\begin{equation}\label{eq:numericallyrealizableset}
\RQ{\basis} = \left\{\moments~:~\exists \distribution(\SC)\ge 0 \, \text{ such that } \moments =\intQ{\basis\distribution} \right\} \subset
\RDdist{\basis},
\end{equation}
Note that pointwise values of $\basis$ have to be available for \eqref{eq:numericallyrealizableset} to be well-defined. For the partial moment bases,
we thus use the continuous extension from $\interior{\generalpart[\cellind]}$ to $\generalpart[\cellind]$ on each element of the partition (see Remark \ref{rem:pmbasisextension}).

In general, $\RQ{\basis}$ is a strict subset of \RDdist{\basis}, i.e., there are some moments that are realizable analytically but cannot be realized with $\angularQuadrature$. For
the hat functions and one-dimensional partial moments, however, the numerically realizable set and the generalized realizable set agree for suitable quadratures.

\begin{theorem}
	\label{thm:RQeqRD}
	Let $\basis$ be an angular basis in one dimension with $\momentnumber$ piece-wise linear (possibly discontinuous) basis functions, i.e., there
        exists a partition $\partition(\SCheight_0, \ldots, \SCheight_{\hankelhalfind})$ of $[-1,1]$ (see \secref{sec:HatFunctions1D}) such
        that the restriction on the interval $\cell{\basisindvar}$
        satisfies $\left.\basiscomp[\basisind]\right|_{\ccell{\basisindvar}}\in\SpaceOfPolynomials{1}(\ccell{\basisindvar})$
          for all $\basisindvar=0,\ldots,\hankelhalfind-1$ and $\basisind=0,\ldots,\momentnumber-1$.
          Additionally, let $\SCheight_\cellind$ and $\SCheight_{\cellind+1}$ be part of the node set of the numerical quadrature $\angularQuadrature_{\cellind} =
          \{(\angularQuadratureWeights{\angularQuadratureIndex}^{\cellind},
        \angularQuadratureNodes{\angularQuadratureIndex}^{\cellind})\}_{\angularQuadratureIndex=0}^{\angularQuadratureNumber_{\cellind}-1}$
on the interval $\cell{\cellind}$,
i.e., $\SCheight_\cellind, \SCheight_{\cellind+1} \in\{\angularQuadratureNodes{\angularQuadratureIndex}^{\cellind}~|~\angularQuadratureIndex=0,\ldots,\angularQuadratureNumber_{\cellind}-1\}$.
Then we have
	\begin{align}\label{eq:QRealizable}
	\RQ{\basis} = \RDdist{\basis}.
    \end{align}
\end{theorem}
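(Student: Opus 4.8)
The plan is to exploit that every basis function is \emph{affine} on each partition cell, so that both the distributional and the numerical moment maps factor through the local zeroth- and first-order moments on each cell via the \emph{same} linear map. First I would write $\basiscomp[\basisind]|_{\cell{\cellind}}(\SCheight) = p_{\basisind\cellind} + q_{\basisind\cellind}\SCheight$ (using the continuous extension to the cell boundaries from \rmref{rem:pmbasisextension}). For a non-negative piecewise distribution $\distribution = (\distribution[0],\ldots,\distribution[\hankelhalfind-1])$ I set the local moments $m_\cellind^0 = \distribution[\cellind](1)$ and $m_\cellind^1 = \distribution[\cellind](\SCheight)$, so that
\begin{align*}
\momentcomp{\basisind} = \sum_{\cellind=0}^{\hankelhalfind-1}\distribution[\cellind]\!\left(\basiscomp[\basisind]|_{\cell{\cellind}}\right) = \sum_{\cellind=0}^{\hankelhalfind-1} \left(p_{\basisind\cellind}\, m_\cellind^0 + q_{\basisind\cellind}\, m_\cellind^1\right),
\end{align*}
i.e.\ $\moments = \Phi\,(m_0^0,m_0^1,\ldots,m_{\hankelhalfind-1}^0,m_{\hankelhalfind-1}^1)^T$ for a fixed matrix $\Phi$ depending only on the basis. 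Because $\basiscomp[\basisind]|_{\cell{\cellind}}$ is affine, the quadrature moment $\intQ{\basiscomp[\basisind]\distribution}$ satisfies the \emph{identical} identity with the \emph{same} $\Phi$, now applied to the numerical local moments $\tilde m_\cellind^0 = \sum_\angularQuadratureIndex \angularQuadratureWeights{\angularQuadratureIndex}^\cellind \distribution|_{\cell{\cellind}}(\angularQuadratureNodes{\angularQuadratureIndex}^\cellind)$ and $\tilde m_\cellind^1 = \sum_\angularQuadratureIndex \angularQuadratureWeights{\angularQuadratureIndex}^\cellind \angularQuadratureNodes{\angularQuadratureIndex}^\cellind \distribution|_{\cell{\cellind}}(\angularQuadratureNodes{\angularQuadratureIndex}^\cellind)$. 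Thus the whole statement reduces to the claim that, per cell, the set of achievable distributional moment pairs coincides with the set of achievable numerical moment pairs.

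On the distributional side I would use that a non-negative distribution on the compact interval $\cell{\cellind}$ is a non-negative measure of order zero (e.g.\ \cite[Thm.~2.1.7]{Hoermander2015}), so order-one realizability on an interval (\thmref{thm:realizabilitypm1d}, i.e.\ \cite[Thm.~4.1+4.3]{Curto1991}) yields exactly the closed convex cone
\begin{align*}
C_\cellind = \{(m^0,m^1)\in\R^2 : m^0\ge 0,\ \SCheight_\cellind\, m^0 \le m^1 \le \SCheight_{\cellind+1}\, m^0\} = \mathrm{cone}\{(1,\SCheight_\cellind),\,(1,\SCheight_{\cellind+1})\}.
\end{align*}
Since the pieces $\distribution[\cellind]$ of a piecewise distribution are independent, the distributional achievable stacked set is the full product $C_0\times\cdots\times C_{\hankelhalfind-1}$, and hence $\RDdist{\basis} = \Phi\,(C_0\times\cdots\times C_{\hankelhalfind-1})$.

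On the numerical side I would observe that $\intQ{\cdot}$ reads $\distribution$ only through the restrictions $\distribution|_{\cell{\cellind}}$ at the nodes of $\angularQuadrature_\cellind$, so the nodal values on different cells (in particular the two copies of a shared grid point $\SCheight_\cellind$) may be prescribed independently; any non-negative integrable extension of prescribed non-negative nodal values (e.g.\ the piecewise-linear interpolant) then represents them. Assuming, as is implicit in a \emph{suitable} quadrature, that the weights are positive (e.g.\ Gauss--Lobatto, which naturally includes the endpoints), the products $t_\angularQuadratureIndex^\cellind := \angularQuadratureWeights{\angularQuadratureIndex}^\cellind \distribution|_{\cell{\cellind}}(\angularQuadratureNodes{\angularQuadratureIndex}^\cellind)$ range over all of $\Rpos$, so the numerical per-cell set is $\mathrm{cone}\{(1,\angularQuadratureNodes{\angularQuadratureIndex}^\cellind) : \angularQuadratureIndex\}$. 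Every node lies in $[\SCheight_\cellind,\SCheight_{\cellind+1}]$, giving $(1,\angularQuadratureNodes{\angularQuadratureIndex}^\cellind)\in C_\cellind$ and hence ``$\subseteq C_\cellind$''; the hypothesis that $\SCheight_\cellind,\SCheight_{\cellind+1}$ are nodes supplies the two extreme generators of $C_\cellind$, giving ``$\supseteq C_\cellind$''. So the numerical per-cell set equals $C_\cellind$ as well, the stacked set is again $C_0\times\cdots\times C_{\hankelhalfind-1}$, and applying $\Phi$ gives $\RQ{\basis} = \RDdist{\basis}$ directly (both inclusions at once).

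The main obstacle is the per-cell matching, and specifically capturing the \emph{boundary} rays of $C_\cellind$: a realizable moment whose normalized first moment sits on $\partial\cell{\cellind}$, i.e.\ $m^1/m^0\in\{\SCheight_\cellind,\SCheight_{\cellind+1}\}$, can only be produced numerically by placing positive weight exactly at the corresponding endpoint, which is precisely why $\SCheight_\cellind,\SCheight_{\cellind+1}$ are required to be quadrature nodes; without this assumption $\RQ{\basis}$ would be spanned only by interior nodes and the equality would fail on the boundary. I would also point out that passing to \emph{piecewise} (rather than a single global) densities is essential for the discontinuous partial-moment bases — a global function would over-determine the shared grid-point values — and that this is consistent with the restriction $\distribution|_{\cell{\cellind}}$ appearing in the definition of $\intQ{\cdot}$, whereas for the continuous hat-function basis the atomic density $\sum_\basisind \momentcomp{\basisind}\dirac(\SCheight-\SCheight_\basisind)$ placed at the grid points is already realized by a genuine global density.
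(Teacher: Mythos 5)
Your proof is correct and follows essentially the same route as the paper's: both arguments reduce to the observation that every realizable per-cell moment pair is a non-negative combination of the endpoint atoms $(1,\SCheight_\cellind)$ and $(1,\SCheight_{\cellind+1})$, which can then be reproduced numerically precisely because the endpoints are quadrature nodes (placing the atom masses, divided by the corresponding positive weights, as nodal values). Your per-cell cone formulation has the minor merit of making explicit why the endpoint-supported two-point representing density exists for an \emph{arbitrary} piecewise-linear basis — a step the paper only asserts by reference to the specific representations \eqref{eq:HFRepresentingDistribution} and \eqref{eq:PMRepresentingDistribution} — and of delivering both inclusions simultaneously.
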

\begin{proof}
  It is straight-forward to show that (generally) $\RQ{\basis}\subset \RDdist{\basis}$.
	Thus let $\moments\in \RDdist{\basis}$. Since the basis is piecewise linear, we can
        find a representing density $\ansatz$ such that $\ansatz|_{\ccell{\cellind}}(\SCheight) = \multiplierscomp{0,\cellind}\dirac(\SCheight-\SCheight_\cellind)
        + \multiplierscomp{1,\cellind}\dirac(\SCheight-\SCheight_{\cellind+1})$ with suitably
chosen $\multiplierscomp{0,\cellind}, \multiplierscomp{1,\cellind}\in\Rpos$ (compare \eqref{eq:HFRepresentingDistribution} and \eqref{eq:PMRepresentingDistribution}). Setting
	\[
          \distribution[\angularQuadratureIndex]|_{\ccell{\cellind}}(\SCheight) = \begin{cases}
            \frac{\multiplierscomp{0,\cellind}}{\angularQuadratureWeights{\angularQuadratureIndex}}
            &\text{ if } \SCheight = \SCheight_\cellind$ (where $\basisind$ is chosen such that $\SCheight_\cellind=\angularQuadratureNodes{\angularQuadratureIndex}),\\
            \frac{\multiplierscomp{1,\cellind}}{\angularQuadratureWeights{\angularQuadratureIndex}}
            &\text{ if } \SCheight = \SCheight_{\cellind+1}$ (where $\basisind$ is chosen such that $\SCheight_{\cellind+1}=\angularQuadratureNodes{\angularQuadratureIndex}),\\
	0 &\text{ else},
	\end{cases}
	\]
      shows that $\moments\in \RQ{\basis}$.
\end{proof}

With a very similar proof, it can be shown that $\RQ{\hfbasisTh}=\RDdist{\hfbasisTh}$ also in
three dimensions if the quadrature contains the vertices $\vertex[\basisind]$ of the triangulation.

For the partial moment basis in three dimensions, there is a numerical
equivalent to the analytical realizability conditions \eqref{eq:RealizabilityPartialMoments}:

\begin{theorem}
	\label{thm:realizabilitypm3dnumerical}
	Let $\moments =
	\left(\momentstens[\sphericaltriangle_\cellind]{0},\momentstens[\sphericaltriangle_\cellind]{1}
        \right)^T\in\R^4\setminus \{\mathbf{0}\}$ be a vector of moments, and let
        $\angularQuadrature_{\cellind}$ be a quadrature on
	$\sphericaltriangle_\cellind$.
        Then $\moments \in \RQ{\pmbasis[\sphericaltriangle_{\cellind}]}$ if and only if
        $\momentstens[\sphericaltriangle_\cellind]{0}>0$
	and
	\begin{equation}
	\normalizedmomentstens[\sphericaltriangle_\cellind]{1} \coloneqq
	\frac{\momentstens[\sphericaltriangle_\cellind]{1}}{\momentstens[\sphericaltriangle_\cellind]{0}}
	\in \convexhull{\{\SC_{\angularQuadratureIndex} ~:~ \SC_{\angularQuadratureIndex} \in \angularQuadrature_\cellind \}} .
	\end{equation}
\end{theorem}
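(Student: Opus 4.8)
The plan is to reduce the numerically realizable set on a single spherical triangle to a finite-dimensional statement about a convex cone, and then to read off the two conditions exactly as in the continuous analogue (\thmref{thm:realizabilitypm3d}), with the triangle $\sphericaltriangle_\cellind$ replaced by the finite set of quadrature nodes.

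First I would unfold the definition \eqref{eq:numericallyrealizableset} restricted to the single element $\sphericaltriangle_\cellind$. Because $\pmbasis[\sphericaltriangle_\cellind](\SC) = {(1,\SCx,\SCy,\SCz)}^T$ on $\sphericaltriangle_\cellind$ and the quadrature rule only samples $\distribution$ at the nodes $\SC_\angularQuadratureIndex\in\angularQuadrature_\cellind$, any $\moments\in\RQ{\pmbasis[\sphericaltriangle_\cellind]}$ has the form
\begin{align*}
\momentstens[\sphericaltriangle_\cellind]{0} = \sum_\angularQuadratureIndex \angularQuadratureWeights{\angularQuadratureIndex}\distribution(\SC_\angularQuadratureIndex), \qquad
\momentstens[\sphericaltriangle_\cellind]{1} = \sum_\angularQuadratureIndex \angularQuadratureWeights{\angularQuadratureIndex}\distribution(\SC_\angularQuadratureIndex)\,\SC_\angularQuadratureIndex,
\end{align*}
for some $\distribution\ge 0$. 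Setting $c_\angularQuadratureIndex := \angularQuadratureWeights{\angularQuadratureIndex}\distribution(\SC_\angularQuadratureIndex)$ and using that the quadrature weights $\angularQuadratureWeights{\angularQuadratureIndex}$ are strictly positive, the assignment $\distribution(\SC_\angularQuadratureIndex) = c_\angularQuadratureIndex/\angularQuadratureWeights{\angularQuadratureIndex}$ gives a correspondence between non-negative nodal densities and non-negative coefficient tuples $(c_\angularQuadratureIndex)_\angularQuadratureIndex$. Consequently $\RQ{\pmbasis[\sphericaltriangle_\cellind]}$ is precisely the convex cone generated by the vectors ${(1,\SC_\angularQuadratureIndex)}^T$, $\SC_\angularQuadratureIndex\in\angularQuadrature_\cellind$.

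From this description both implications are immediate. For the necessity, if $\moments\neq\mathbf{0}$ lies in this cone then not all $c_\angularQuadratureIndex$ vanish, so $\momentstens[\sphericaltriangle_\cellind]{0} = \sum_\angularQuadratureIndex c_\angularQuadratureIndex > 0$; dividing through, $\normalizedmomentstens[\sphericaltriangle_\cellind]{1} = \sum_\angularQuadratureIndex (c_\angularQuadratureIndex/\momentstens[\sphericaltriangle_\cellind]{0})\,\SC_\angularQuadratureIndex$ is a convex combination of the nodes and hence lies in $\convexhull{\{\SC_\angularQuadratureIndex : \SC_\angularQuadratureIndex\in\angularQuadrature_\cellind\}}$. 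Conversely, given $\momentstens[\sphericaltriangle_\cellind]{0} > 0$ and a representation $\normalizedmomentstens[\sphericaltriangle_\cellind]{1} = \sum_\angularQuadratureIndex \lambda_\angularQuadratureIndex \SC_\angularQuadratureIndex$ with $\lambda_\angularQuadratureIndex\ge 0$ and $\sum_\angularQuadratureIndex \lambda_\angularQuadratureIndex = 1$, I would set $c_\angularQuadratureIndex := \momentstens[\sphericaltriangle_\cellind]{0}\lambda_\angularQuadratureIndex\ge 0$, which reproduces both $\momentstens[\sphericaltriangle_\cellind]{0}$ and $\momentstens[\sphericaltriangle_\cellind]{1} = \momentstens[\sphericaltriangle_\cellind]{0}\normalizedmomentstens[\sphericaltriangle_\cellind]{1}$, and recover a non-negative density via $\distribution(\SC_\angularQuadratureIndex) = c_\angularQuadratureIndex/\angularQuadratureWeights{\angularQuadratureIndex}$.

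Mathematically there is essentially no obstacle here; the argument is the discrete shadow of the proof of \thmref{thm:realizabilitypm3d}, where the roles of the sphere part $\sphericaltriangle$ and the flat-triangle part $\flattriangle$ (the extreme points of $\convexhull{\sphericaltriangle}$) are now played by the finitely many quadrature nodes, so that $\convexhull{\sphericaltriangle}$ is replaced by $\convexhull{\{\SC_\angularQuadratureIndex\}}$. The one hypothesis that must be made explicit — and which is exactly what makes the cone-generation step work — is that the quadrature weights are strictly positive, so that $\distribution(\SC_\angularQuadratureIndex)\ge 0$ is equivalent to $c_\angularQuadratureIndex\ge 0$; with a rule having non-positive weights the numerically realizable set would no longer be described by the convex hull of the nodes. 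A minor bookkeeping point is to invoke $\moments\neq\mathbf{0}$ to guarantee $\momentstens[\sphericaltriangle_\cellind]{0}>0$ in the necessity direction.
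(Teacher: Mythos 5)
Your argument is correct and is essentially the paper's proof made self-contained: the paper simply cites Proposition~1 of Alldredge et al.\ for the fact that the numerically realizable set at unit density equals the convex hull of the basis evaluations at the quadrature nodes, and then applies it to $\moments/\momentstens[\sphericaltriangle_\cellind]{0}$, whereas you re-derive that fact directly by exhibiting $\RQ{\pmbasis[\sphericaltriangle_\cellind]}$ as the convex cone generated by the vectors ${\left(1,\SC_\angularQuadratureIndex\right)}^T$ and then reading off both implications. Your explicit observation that strictly positive quadrature weights are what make the correspondence $c_\angularQuadratureIndex = \angularQuadratureWeights{\angularQuadratureIndex}\distribution(\SC_\angularQuadratureIndex)$ preserve non-negativity is a hypothesis the paper leaves implicit (it is assumed in the cited proposition) and is worth keeping.
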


\begin{proof}
  In \cite[Proposition 1]{Alldredge2014} it was shown that the set of numerically realizable moments with density $\density=1$ is the convex hull of the basis evaluations,
\begin{equation}
\RQ{\basis}|_{\density = 1} =
\convexhull{\{\basis(\SC_\angularQuadratureIndex)\}_{
		\angularQuadratureIndex=0}^{ \angularQuadratureNumber-1}}.
\end{equation}
Applying this to $\frac{\moments}{\momentstens[\sphericaltriangle_\cellind]{0}}$ and using the definition of $\pmbasis[\sphericaltriangle_\cellind]$ gives the result.
\end{proof}

Note that \thmref{thm:realizabilitypm3dnumerical} shows that the analytical and numerical realizable
set differ for the three-dimensional partial moments as the quadrature would have to contain all
points on $\sphericaltriangle$ to reproduce the analytical realizability conditions.

\section{Results}

\label{sec:results}
To test the approximation properties of the regarded models, we investigate convergence against
some prescribed density function $\distribution$.

For each basis $\basis$, the moment vector $\moments$ is calculated as $\moments = \ints{\basis \distribution}$. Then the
ansatz densities are computed by \eqref{eq:psiME}. We use the backtracking Newton scheme from
\cite{Alldredge2014} (without change of basis). As an initial guess, we use the multipliers for the
isotropic moment. Integrals are calculated using very fine quadratures to avoid numerical errors. In
one dimension, for the $\MN$ models, the domain $[-1, 1]$ is partitioned in $200$ equally spaced
intervals and a Gauss-Lobatto quadrature of order 197 is used on each interval. The $\HFMN$ and
$\PMMN$ models already come with the partition $\partition$. We further subdivide $\partition$
such that at least $200$ subintervals are available, then we use the same Gauss-Lobatto
quadrature on each of the subintervals. In three dimension, we use a Fekete quadrature
\cite{Taylor2000} of order $18$ on each spherical triangle on the most refined spherical
triangulation for all models (including the $\MN$ models).
\vspace{-5pt}

We follow the FAIR guiding principles for scientific research \cite{Wilkinson2016} and publish the
code that generates the following results in \cite{SchneiderLeibner2019DataversePart1}. {The
code allows to choose between Maxwell-Boltzmann and Bose-Einstein entropy. As the results
with both entropies are qualitatively the same, we are only presenting the Maxwell-Boltzmann
results in the following.}
\vspace{-6pt}
\subsection{Slab geometry}
In one dimension, we prescribe a Gaussian function
\begin{align}
 \distribution[\text{Gauss}](\mu) &= \frac{1}{\sqrt{2\pi\sigma^2}}
\exp\left(\frac{(\mu-\bar{\mu})^2}{-2 \sigma^2}\right) \text{ where } \sigma = 0.5,
\bar{\mu} = 0
\end{align}
as a smooth test case and a Heaviside function
\begin{align}
 \distribution[\text{Heaviside}](\mu) &= \begin{cases} \psi_{vac}(\mu) &\text{if } \mu
< 0 \\ 1 & \text{else} \end{cases} \hspace{0.5cm} \text{ where } \hspace{0.5cm} \psi_{vac}(\mu)
= \frac{10^{-8}}{2}
\end{align}
as a discontinuous test. Note that the minimum-entropy ansatz \eqref{eq:psiME} is
  always strictly positive. The minimum entropy ansatz thus cannot represent density functions that are
  exactly zero in parts of the domain. Moreover, for very small (positive) values the multipliers $\multipliers$ tend to
infinity (in absolute values) which makes it numerically infeasible to solve the
dual optimization problem \eqref{eq:dual}. We thus prescribe an isotropic ``vacuum'' density
$\psi_{vac}$ (with local particle density $\density = 10^{-8}$) as a minimum instead of zero.

{As a third test case, we use}
\begin{align}
{ \distribution[\text{CrossingBeams}](\mu)} &= { \sqrt{\frac{a}{\pi}}
\left(\exp\left(-a(\mu+1)^2\right) + \exp\left(-a(\mu-0.5)^2\right) \right) \text{ where }
a = 10^3}
\end{align}
{which describes two crossing beams of particles with velocities close to $-1$ and $0.5$,
respectively. This is a hard test case for moment models, as it approaches the sum of two
Dirac distributions. In particular, full-moment models of low order typically fail (e.g. $\MN[1]$
produces the isotropic solution) in this specific setup} \cite{Frank2006,Schneider2014}.

Exemplary ansatz functions are shown in Figures \ref{fig:GaussApproximations},
\ref{fig:HeavisideApproximations} and \ref{fig:CrossingBeams1dApproximations}. The convergence
results can be found in \figref{fig:DensityApproximations1d}. Further plots for
all tested models and tabulated errors are provided in the supplementary materials.

\begin{figure}
	\centering
\externaltikz{DensityApproximations}{\input{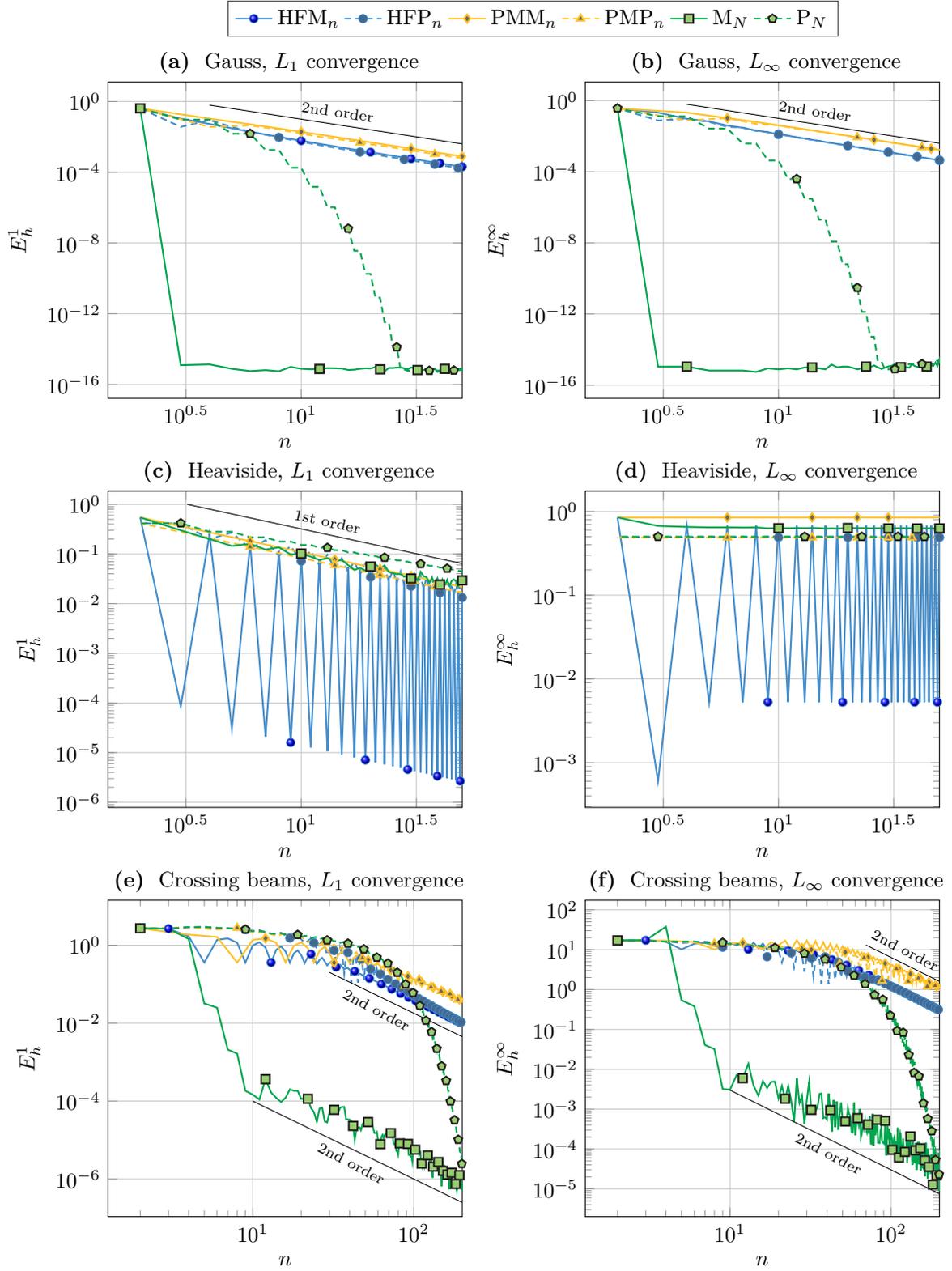}}
\caption{Approximation errors for prescribed kinetic densities in slab geometry. $\PMMN[4k]$
and $\PMPN[4k]$ models are not included in the Heaviside plots because they are exact.}
	\label{fig:DensityApproximations1d}
\end{figure}

For the Gauss function, as expected \cite{Hesthaven2008}, the $\PN$ models show exponential
convergence whereas the $\MN$ models are exact (up to numeric errors) starting from order $2$. This
is due to the fact that the Gaussian can be written as \eqref{eq:psiME} if $\momentorder\geq 2$. For
the piecewise linear models, second-order convergence is expected \cite{Boor1968, Boor1979}, which
is confirmed by the results both in $\Lp{1}$ norm and $\Lp{\infty}$ norm. The $\HFMN$ and
$\PMMN$ approximations show similar shapes, dependent on the location of the ansatz intervals. For
models with an odd number of intervals the peak of the Gauss function lies in the middle of an
ansatz interval and is approximated by a constant value in that interval (due to its symmetry).
These models thus show a plateau around zero. In contrast, models using an even number of intervals
show a maximum at zero. In this test case, the linear $\HFPN$ and $\PMPN$ models do not differ much
from their non-linear counterparts.
\begin{figure}[htpb]
	\centering
	\externaltikz{GaussApproximations}{\input{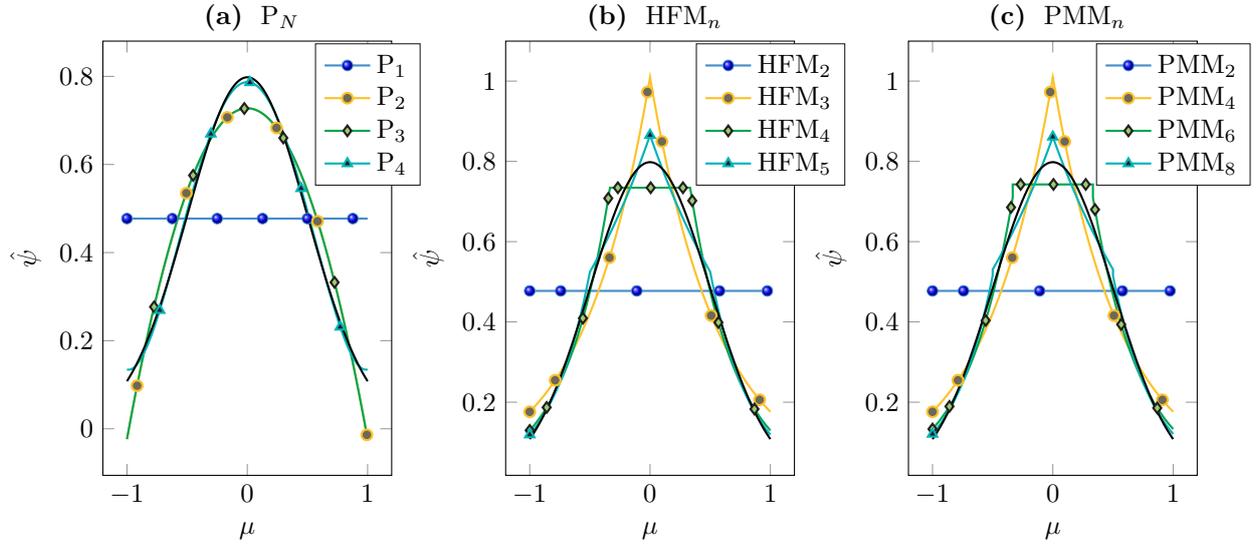}}
        \caption{Ansatz densities of selected models for the slab geometry Gauss test. $\MN[1]$ is equal
to $\PMMN[2]$, higher order $\MN$ models are exact. The linear $\PMPN$ and $\HFPN$ models give
results similar to their non-linear counterparts. Reference is plotted as solid black line.}
	\label{fig:GaussApproximations}
\end{figure}

In the Heaviside test, the $\PMPN$ and $\PMMN$ models are exact
if an even number of intervals is used (such that the jump occurs between two ansatz intervals).
All other tested models and the $\PMPN$ and $\PMMN$ models for an odd number of intervals show the
expected first-order convergence in $\Lp{1}$ norm \cite{kuznetsov1977stable,Tadmor1991}. Notably,
the $\HFMN$ models with an even number of intervals give errors that are several orders of magnitude
lower than the other models though the rate of convergence is not improved. In $\Lp{\infty}$ norm,
no convergence can be observed in this testcase, due to the well-known Gibbs phenomenon (see
\figref{fig:HeavisideApproximations}). The $\MN$ and $\PN$ models both show strong
oscillations around the reference solution. For higher orders, the frequency of the oscillations
increases while their amplitude decreases, except at the discontinuity where the overshoot
approximately stays the same. While the $\PN$ models are symmetric, the $\MN$ models mainly
oscillate around the positive part of the Heaviside function due to their inherent positivity.
The $\PMMN$ and $\PMPN$ models with an odd number of intervals are exact on all intervals except the one containing the discontinuity. In
that interval, the $\PMMN$ models show a strong overshoot and the $\PMPN$ models show a symmetric
under-/overshoot at both sides of the discontinuity. The $\HFMN$ and $\HFPN$ models show similar
behavior, except that the under-/overshoots give rise to small oscillations in the other intervals
due to the models' continuity.
\begin{figure}[htbp]
	\centering
	\externaltikz{HeavisideApproximations}{\input{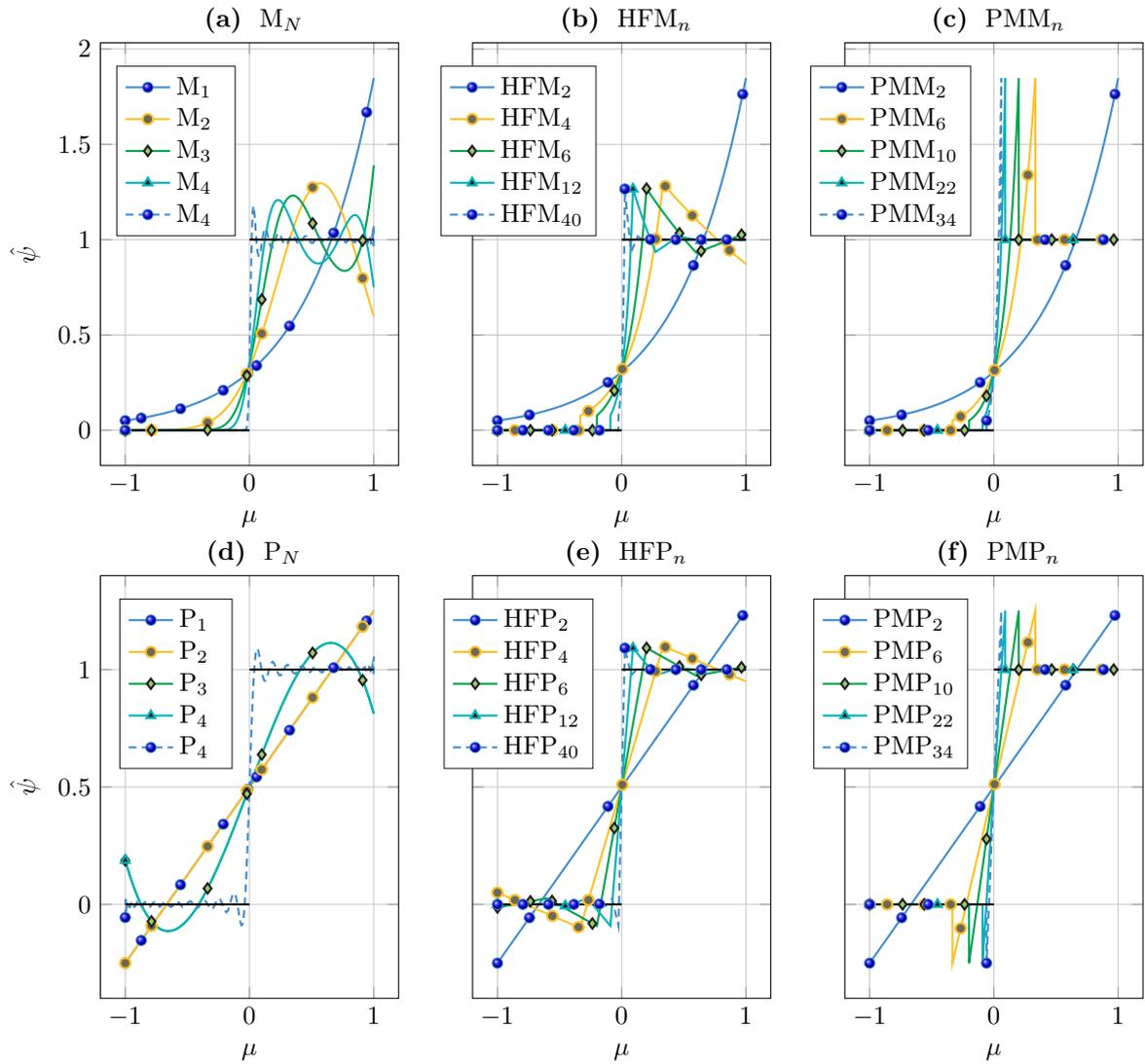}}
        \caption{Ansatz densities of selected models for the slab geometry Heaviside test. Reference is
plotted as solid black line.}
	\label{fig:HeavisideApproximations}
\end{figure}

For the two crossing beams, approximation quality of the $\MN$ models initially rapidly
increases with the moment order. $\MN[1]$ and $\MN[2]$ fail, giving (nearly) isotropic
ansatz densities, while the $\MN[3]$ solution starts to show features of the reference solution.
$\MN[4]$ is already quite close to the reference solution. For even higher orders, the approximation
error decreases with second order in $n$. The first-order models struggle in this test case.
Depending on the location of the ansatz intervals, the beam centered at $\SCheight = 0.5$ is
approximated either flattened out, sharpened or skewed to the left or right. The beam around
$\SCheight = -1$ always is at the boundary of an interval and thus at least qualitatively met.
Only for high moment numbers (starting at around $\momentnumber = 50$), the approximation quality
significantly increases (with second order). Again, the linear $\HFPN$ and $\PMPN$ exhibit similar
behavior. The $\PN$ models show significant oscillations and perform even worse than the
piecewise first-order models for a large range of $\momentnumber$. For very high orders, given
that the crossing beams density regarded here is anisotropic but still smooth, the
exponential convergence finally kicks in and the approximation errors rapidly decrease.

\begin{figure}[htpb]
	\centering
	\externaltikz{CrossingBeams1dApproximations}{\input{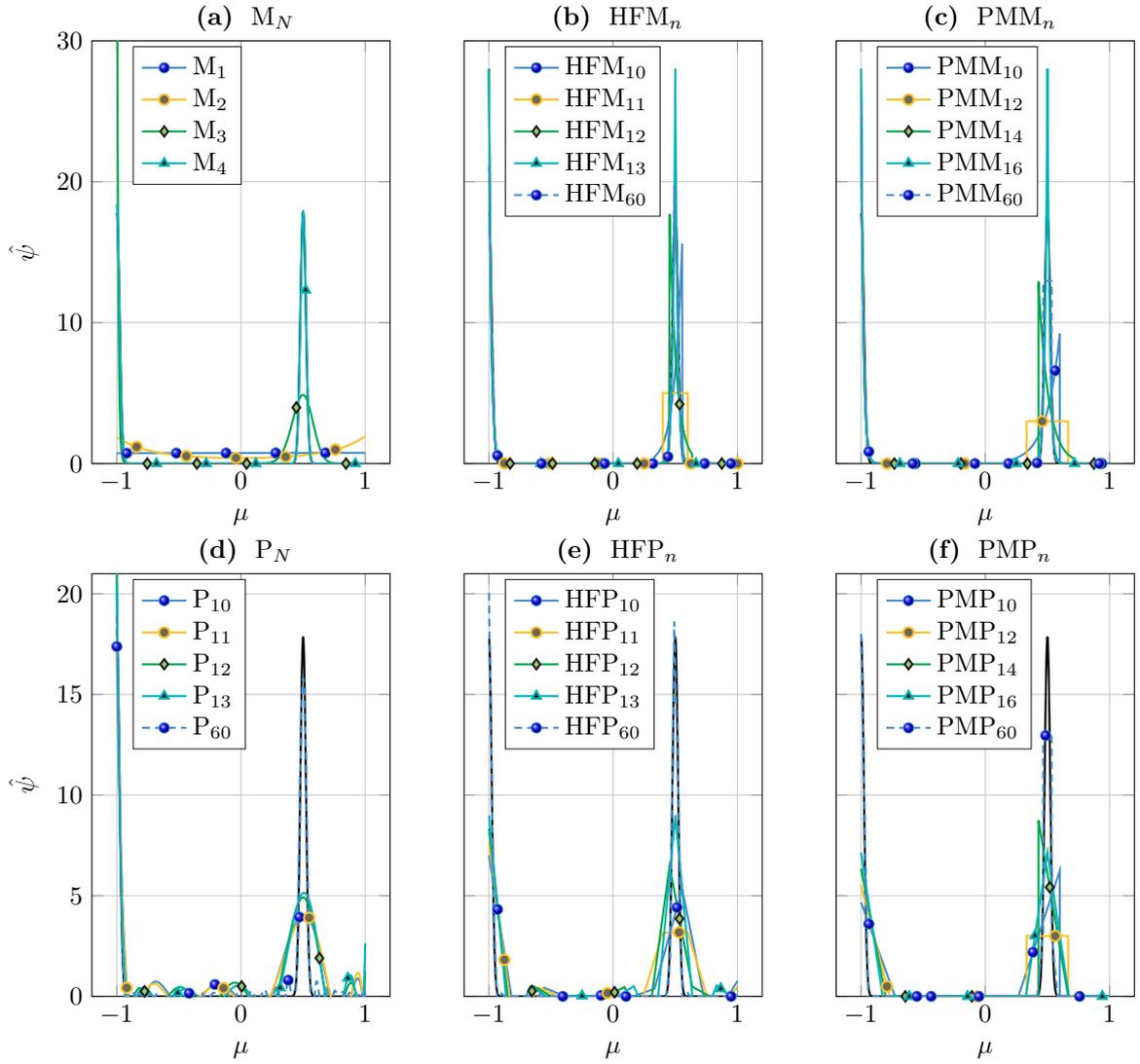}}
        \caption{Ansatz densities of selected models for the slab geometry crossing beams test.}
	\label{fig:CrossingBeams1dApproximations}
\end{figure}

\pagebreak

\subsection{Three dimensions}
In three dimensions, the tests again include a smooth Gaussian
\begin{align}
 \distribution[\text{Gauss}](\SC) &= \frac{1}{2 \pi \sigma^2 \left( 1 -
\exp\left(-2 \sigma^{-2}\right)\right)}
\exp\left(\frac{(\SCx-\overline{\SCx})^2 + \SCy^2 + \SCz^2}{-2 \sigma^2}\right) \text{ where }
\sigma = 0.5,
\overline{\SCx} = 1,
\end{align}
a discontinuous indicator function
\begin{align}
 \distribution[\text{Square}](\SC)
 &=
 \begin{cases}
 1 &\text{if } \SCx > 0 \land \abs{\SCy} < 0.5 \land \abs{\SCz} < 0.5 \\
 \distribution[vac](\SC)  &\text{else,}
\end{cases}
\hspace{0.3cm} \text{ where } \hspace{0.3cm} \distribution[vac](\Omega) = \frac{10^{-8}}{4\pi},
\end{align}
{and a density function modelling two crossing beams of particles}
\begin{align}
{ \distribution[\text{CrossingBeams}](\SC) }
 &=
 { \max\left(\frac{a}{\pi}
 \left(\exp(-a \norm{\SC - (1, 0, 0)^T}{2}^2) + \exp(-a \norm{\SC - (0, 1, 0)^T}{2}^2)\right),
\distribution[vac](\SC) \right),}
\end{align}
{where $a = 100$.}

The convergence results are depicted in \figref{fig:DensityApproximations3d}. For completeness, we
also show a selection of ansatz functions for Gauss (\figref{fig:DensityApproximations3dGauss}),
Square (\figref{fig:DensityApproximations3dSquare}) and Crossing Beams test
(\figref{fig:DensityApproximations3dCrossingBeams}). Ansatz functions for all tested models and
tabulated convergence results can be found in the supplementary materials.

The smooth Gauss function is reproduced exactly by all $\MN$ models. This is
different to the slab geometry case since $\SC_x^2+\SC_y^2+\SC_z^2 = 1$ and therefore this Gaussian
on the unit sphere is actually just a first-order ansatz. Likewise, the $\PMMN$ models are exact.
Due to quadrature errors and errors in solving the optimization problem, the actual computed error
is slightly higher than numerical accuracy, which is especially visible for $\MN[7]$ and $\MN[8]$
($64$ and $81$ moments, respectively). The $\HFMN$ ansatz is not able to model the Gaussian
function exactly and results in a rather square-shaped ansatz density for the lower order
models. The $\PN$ models show exponential convergence. For the piecewise linear models, quadratic
convergence in the grid width is expected \cite{Hesthaven2008}. This corresponds to linear
convergence in $\momentnumber$, as the grid width approximately halves with each dyadic refinement,
but $\momentnumber = 2 \cdot 4^{k+2}$ after $k$ refinements for the partial moment basis and
$\momentnumber = 2 + 4^{k+1}$ for the hatfunction basis. Both models converge with the expected
order.

For the discontinuous square function, as in one dimension, first-order convergence in the grid
width (for the piecewise linear models) is expected in the $\Lp{1}$-norm \cite{Hesthaven2008}. This
corresponds to a convergence rate of $\frac{1}{2}$ in $\momentnumber$. The expected convergence rate
for the piecewise-linear models is achieved on average, with slightly varying values (from $0.3$ to
$0.8$) between different refinement iterations. As in one dimension, on triangles
containing or touching the discontinuity, artificial maxima are produced. On triangles away from
the discontinuity, the exact solution is reproduced, with slight deviations in the case of the
hatfunction models due to the continuity requirements. As it takes a very fine triangulation to
resolve the discontinuity even the high-order models still show visible differences compared to the
reference solution. The full moment models ($\MN$ and $\PN$) converge
(on average) slightly slower, with an observed convergence rate of approximately $0.38$.
Here, the low-order models cannot reproduce the square shape and are radially symmetric.
Higher-order models closely mimic the square but show a regular oscillation pattern within the
square domain. As in one dimension, the number of oscillations increases with the ansatz order.
In $\Lp{\infty}$ norm, none of the models converge due to the famous Gibbs phenomenon.

\begin{figure}[htbp]
	\centering
\externaltikz{DensityApproximations3d}{\input{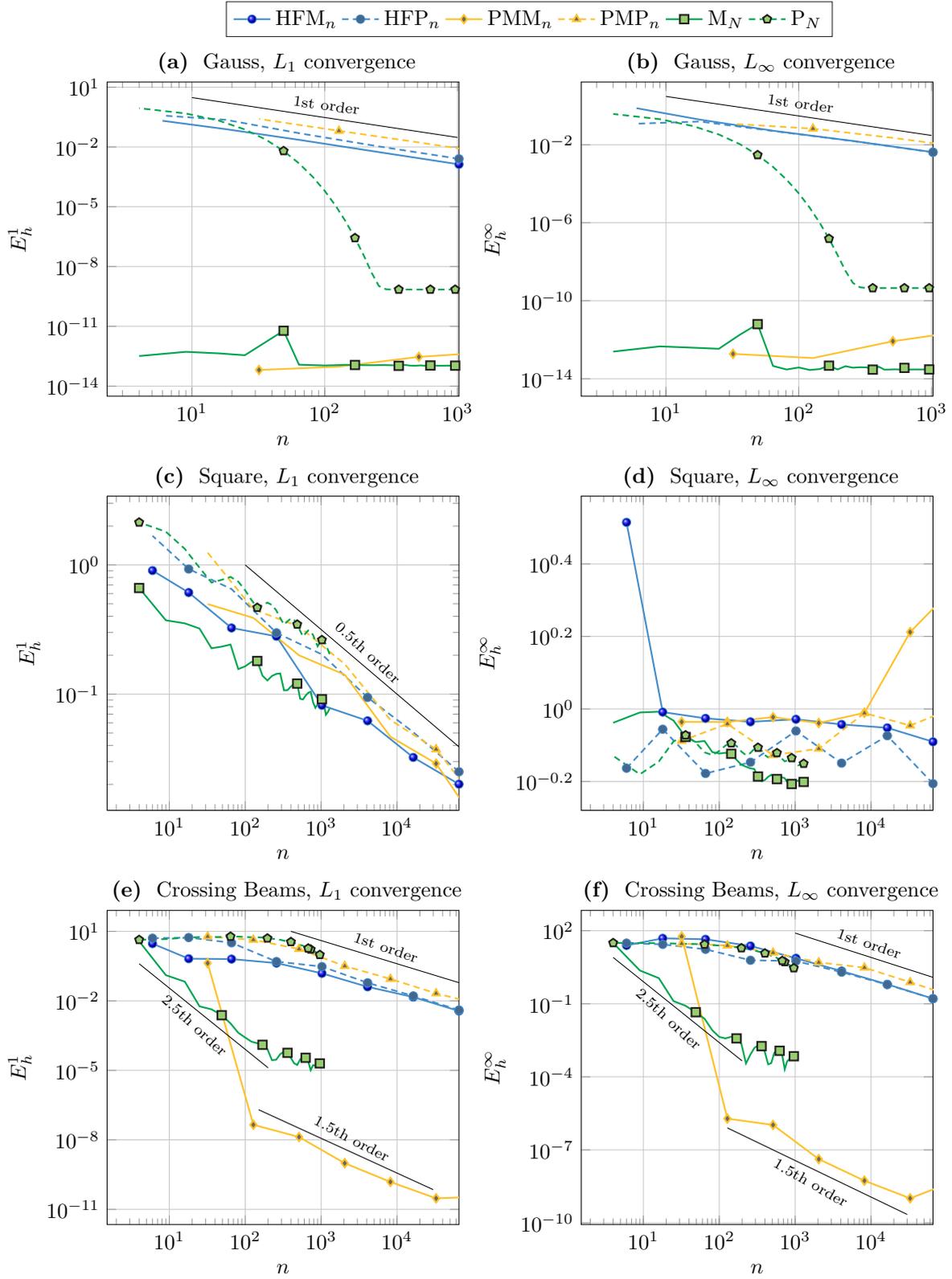}}
\caption{Approximation errors for prescribed kinetic densities in three dimensions.}
	\label{fig:DensityApproximations3d}
\end{figure}

\begin{figure}[htbp]
	\centering
	\externaltikz{DensityApproximations3dGauss}{\input{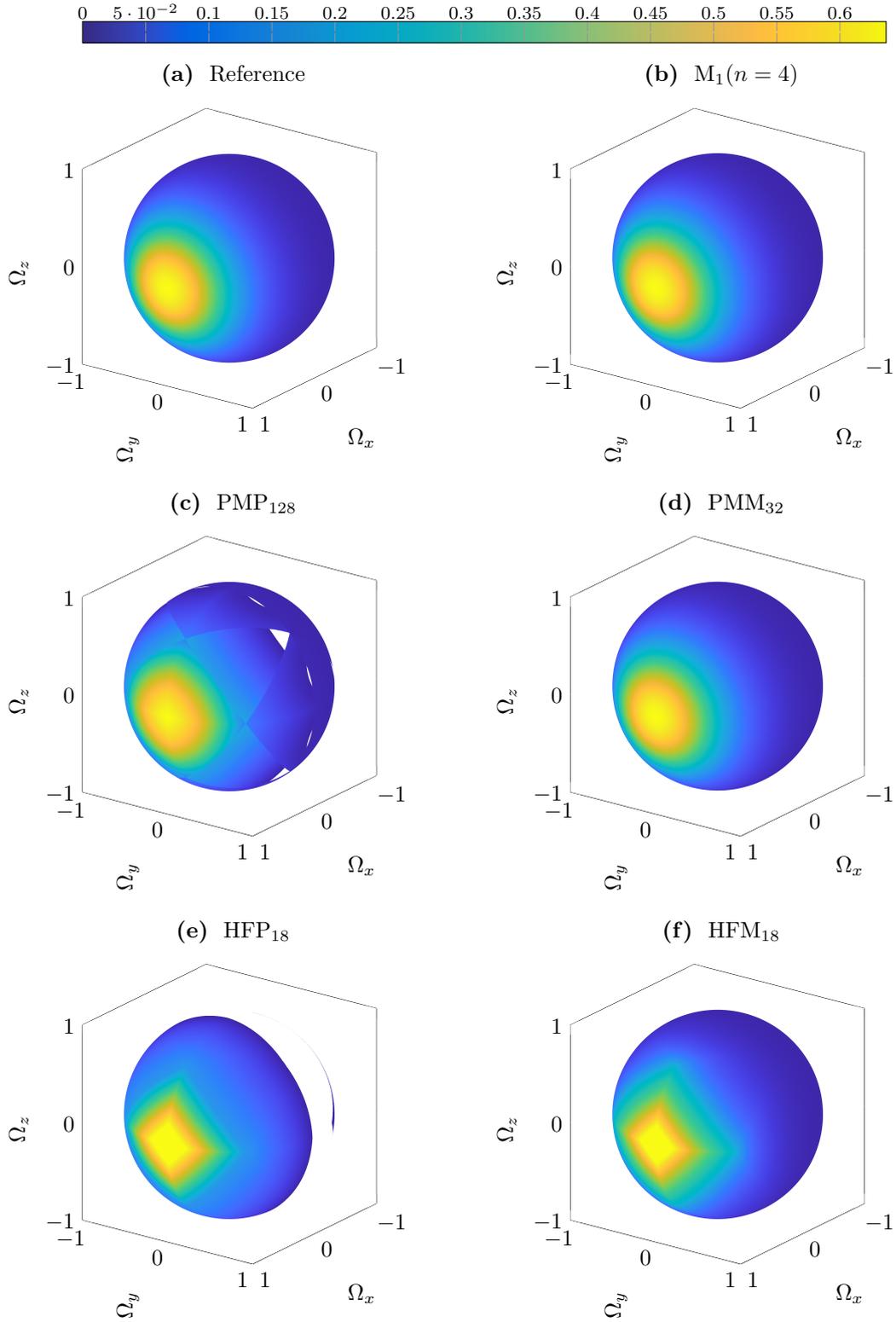}}
        \caption{Ansatz densities of selected models for the three-dimensional Gauss test. The function values are
          represented by the color, restricted to the interval $[0,\frac{2}{\pi}]$.
Unphysical negative values are shown in white.}
	\label{fig:DensityApproximations3dGauss}
\end{figure}

\begin{figure}[htbp]
	\centering
	\externaltikz{DensityApproximations3dSquare}{\input{Images/DensityApproximations3dSquare}}
        \caption{Ansatz densities of selected models for the three-dimensional square test. The function values
        are represented by the color, restricted to the interval $[0,\frac32]$.}
	\label{fig:DensityApproximations3dSquare}
\end{figure}

\begin{figure}[htbp]
	\centering
\externaltikz{DensityApproximations3dCrossingBeams}{
\input{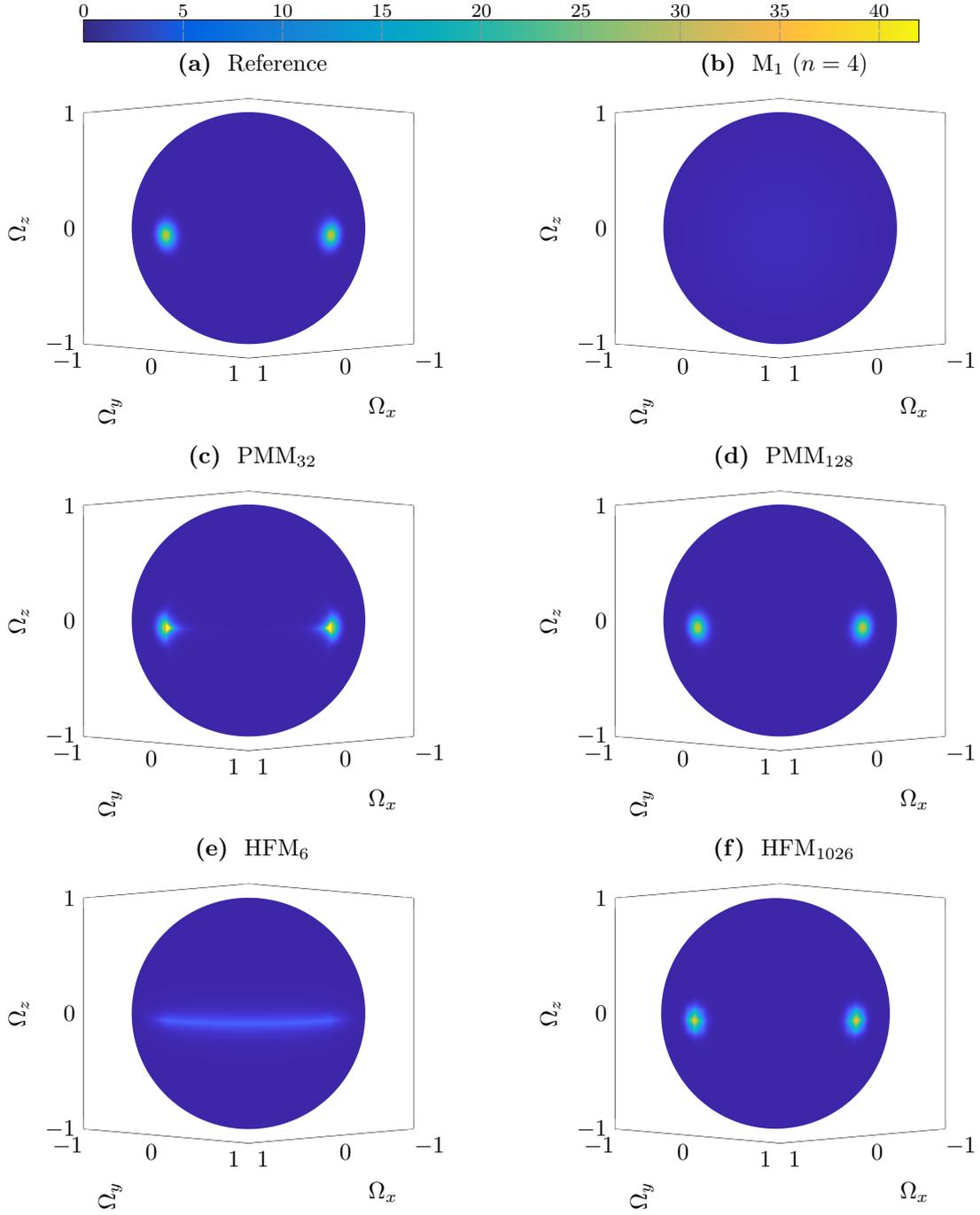}}
\caption{Ansatz densities of selected models for the three-dimensional crossing beams
          test. The function values are represented by the color, restricted to the interval
$[0,42]$.}
	\label{fig:DensityApproximations3dCrossingBeams}
\end{figure}

\subsection{Computational complexity}

Solving the minimum entropy moment model \eqref{eq:MomentSystemClosed} requires the (numerical)
solution of the non-linear minimum entropy optimization problem \eqref{eq:dual} at every point on
the space-time grid. Though inherently parallelizable (the optimization
problems at a given time point are independent of each other) the computational cost often is
prohibitively high for practical applications. Moreover, the numerical scheme has to guarantee that
the moment vectors stay in the realizable set at all times as the optimization problem is not
solvable otherwise. Given the complicated realizability conditions for the $\MN$ models (see
\thmref{thm:FullMomentRealizability}), this is no easy task. Finally, the integrals needed during
the optimization usually cannot be evaluated analytically for the $\MN$ models. Hence, a numerical
quadrature has to be used which further complicates realizability.

The piece-wise linear models avoid many of these problems. The local support allows to use sparse
or blocked containers in the implementation of the optimization algorithm. The realizability
conditions are much simpler and, apart from the partial moment models in three dimensions, the
numerically realizable set agrees with the analytically realizable set (see
\secref{sec:Realizability}) which means that using a quadrature does not pose additional
problems. In addition, analytical formulas for the integrals are available.

A thorough numerical investigation of the piece-wise linear minimum entropy moment models
\eqref{eq:MomentSystemClosed} will be done in a follow-up to this paper. Here, we only analyse
the performance of solving a single optimization problem. We focus on the two crossing beams
test cases which are representative of all our numerical tests. In general, the optimization algorithm
converges faster for all models in the Gauss test cases and slower in the discontinuous ones
but the relative behavior stays the same.

The results can be found in \figref{fig:Timings}. For the $\MN$ models, the computation times increase
rapidly with higher order. The increase is quadratic in the number of moments $\momentnumber$ in
three dimensions and approximately of order $1.5$ in one dimension. In contrast, the execution times
are basically independent of $n$ for the $\HFMN$ and $\PMMN$ models which are thus several orders of
magnitude faster for large moment numbers. Here, due to the local support, only a
fixed number of basis functions is non-zero at each point and thus the computational
effort to, e.g., calculate the hessian matrix of the objective function only depends on the number
of quadrature points. The zig-zag pattern in \figref{fig:Timings}a) results from our choice of
quadrature intervals (see the beginning of \secref{sec:results}) which does not
yield exactly the same number of quadrature points for each model. A practical implementation
would probably choose the quadrature suitable to the model, thus using a coarser quadrature for models with
fewer moments. In that case, an additional linear increase in computational time with the number of
quadrature points would be expected for all models.

\begin{figure}[htbp]
	\centering
\externaltikz{TimingsPart1}{\input{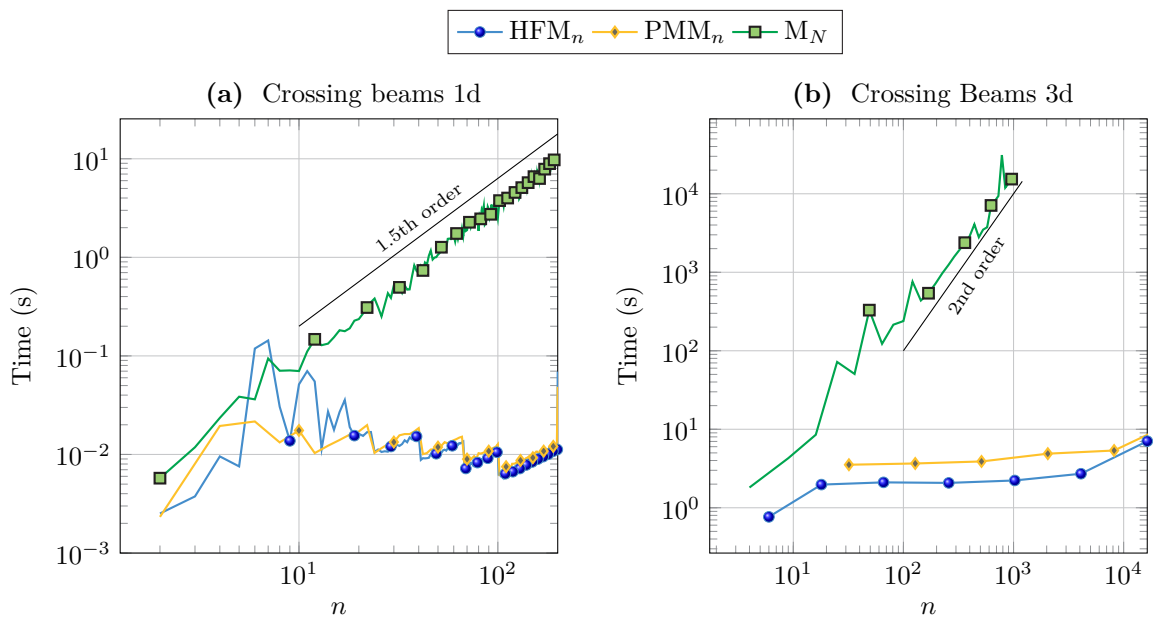}}
	\caption{Timings for the minimum entropy optimization problem in the crossing beams
testcases.}
	\label{fig:Timings}
\end{figure}

\section{Conclusions and outlook}
We have derived minimum-entropy models for continuous and discontinuous piece-wise linear basis
functions in one dimension and on the unit sphere. Additionally, we gave the corresponding
realizability conditions, which also provide the set of moments for which these minimum-entropy
models are hyperbolic and well-defined. Finally, numerical tests show the expected second- and
first-order convergence to smooth and discontinuous kinetic densities, respectively.

For smooth tests, the standard minimum-entropy $\MN$ models show much better approximation
properties than the new $\HFMN$ and $\PMMN$ models at equal moment number $\momentnumber$. They
should thus be preferred if smooth kinetic densities are expected. However, in practical applications,
it is rarely known in advance that the kinetic densities are smooth, and non-smooth densities
frequently occur. In that case, the new models give competitive
approximations and are several orders of magnitude faster to compute. If in addition the ansatz
triangulation is fitted to the discontinuities, the piece-wise linear models are even able to
outperform the full moment models also with respect to approximation quality. Thus, approaches
to adaptively choose the ansatz triangulation will be investigated further.

Comparing the two new model classes, continuous and discontinuous piece-wise linear basis functions
generally perform very similar, though a direct comparison in three dimensions unfortunately is
impossible due to the difference in the degrees of freedom (e.g.\ $\PMMN[32]$
often yields a better result than $\HFMN[18]$ but is outperformed by the next refinement
$\HFMN[66]$).

The sequel of this paper will deal with higher-order realizability-preserving numerical schemes for
this type of moment models and an extensive numerical analysis for several classical benchmark
problems. Additionally, we investigate the effects of numerical quadrature on the realizability set.

In future work, generalizations of the continuous piece-wise linear basis function to higher-order
splines on the unit sphere \cite{alfeld1996bernstein} could be considered, as well as higher-order
partial moments. However, realizability theory even for second order is challenging and has yet to
be found, except for very few special cases like the full moment basis \cite{Ker76}.
Additionally, the development of Kershaw closures as in \cite{Ker76,Monreal,Schneider2015,Schneider2016a} for these
new bases would be particularly interesting, as they provide a realizable moment method while avoiding the
costly non-linear optimization problem of the minimum-entropy model.

\bibliographystyle{siamplain}
\bibliography{bibliography}

\end{document}